\documentclass{amsart}
\usepackage{amssymb,amsfonts}
\usepackage[all,arc]{xy}
\usepackage{enumerate}
\usepackage{mathrsfs,multicol}
\usepackage[margin=1in]{geometry}

\newtheorem{thm}{Theorem}[section]
\newtheorem{cor}[thm]{Corollary}
\newtheorem{prop}[thm]{Proposition}
\newtheorem{lem}[thm]{Lemma}
\newtheorem{conj}[thm]{Conjecture}

\theoremstyle{definition}

\newtheorem{exmp}[thm]{Example}

\theoremstyle{remark}
\newtheorem{rem}[thm]{Remark}

\newcommand{\Ext}{\operatorname{Ext}}
\newcommand{\ord}{\operatorname{ord}}

\newcommand{\C}{\mathfrak C}
\newcommand{\D}{\Delta}

\newcommand{\e}{\varepsilon}

\newcommand{\MTZF}{M_*(2)_{\mathbb Z/5}}

\makeatletter
\let\c@equation\c@thm
\makeatother
\numberwithin{equation}{section}

\bibliographystyle{plain}

\title{Explicit modular forms from
the divided beta family}

\author{Donald M.\ Larson}

\begin{document}

\begin{abstract}
We compute modular forms known to
arise from the order 5 generators of the $5$-local Adams-Novikov spectral sequence 2-line, generalizing and contextualizing previous computations of M.\ Behrens and G.\
Laures. We exhibit analogous computations
at other primes and conjecture formulas for
some of the modular forms arising in this way
at arbitrary primes $\geq5$.
\end{abstract}

\maketitle

\tableofcontents

\section{Introduction}\label{Intro}
At an odd prime $p$, the 2-line
$\Ext^{2,*}$ of the Adams-Novikov
spectral sequence
\[
\Ext^{s,t}:=\Ext^{s,t}_{BP_*BP}(BP_*,BP_*)
\Rightarrow(\pi_{t-s}S^0)_{(p)}
\]
is generated additively by distinguished
classes $\beta_{i/j,k}\in\Ext^{2,
2i(p^2-1)-2j(p-1)}$
of order $p^k$ for certain ordered
triples of positive integers
$(i,j,k)$ \cite{MRW}.
These generators
are known collectively as the
$p$-primary divided beta family.
Behrens \cite{Beh:Cong}
has shown that, for
$p\geq5$, each
$\beta_{i/j,k}$ gives rise to
a modular form $f_{i/j,k}$
over the integers
satisfying conditions
formulated in terms of
$i$, $j$, $k$, and $p$,
including both a congruence
and non-congruence condition.
The proof that these
modular forms exist uses
homotopical properties of spectra
related to the $K(2)$-local
sphere \cite{Beh:Mod}
and ultimately does not explicitly identify the $f_{i/j,k}$.
The purpose of this paper is to
compute the modular forms
$f_{i/j,1}$ associated to order
$p$ divided beta family elements
$\beta_{i/j,1}$
in several cases, including
a complete computation of
such forms at the prime 5.

Behrens' theorem
\cite[Theorem~1.3]{Beh:Cong}
says the following in the case $k=1$.
Let $M_t$
be the space of weight $t$ modular forms
over $\mathbb Z$ for the full
modular group $\Gamma_0(1)=SL_2(\mathbb Z)$,
and for
a positive integer $N$, let $M_t(N)$
be the space of weight $t$ modular
forms over $\mathbb Z$
for the congruence subgroup
$\Gamma_0(N)\subset \Gamma_0(1)$
\cite{Lang:Mod}.
Given an order $p$ divided beta
family element $\beta_{i/j,1}$
for $p\geq5$,
there exists a modular form
$f_{i/j,1}\in M_{i(p^2-1)}$ satisfying
the following conditions:
\begin{enumerate}
\item[(C1)] the Fourier expansion $f_{i/j,1}(q)\in\mathbb Z[[q]]$
is not congruent to 0 mod $p$,
\item[(C2)] the integer $12\cdot\ord_qf_{i/j,1}(q)$ is either
greater than $(p^2-1)i-(p-1)j$, or equal to $(p^2-1)i-(p-1)j-2$,
\item[(C3)] there does not exist $g\in M_t$ for $t<(p^2-1)i$ such that
$f_{i/j,1}(q)\equiv g(q)\mod p$, and
\item[(C4)]\label{C4} for every prime $\ell\neq p$,
there exists $g\in
M_{(p^2-1)i-(p-1)j}(\ell)$
such that $(L_{\ell}f_{i/j,1})(q)\equiv g(q)\mod
p$
\end{enumerate}
where 
$L_N:M_t\to M_t(N)$ is a linear operator
to be defined in Section \ref{ModularForms}.

There is ambiguity inherent in the computation
of these modular forms.  For example,
if a modular form $f$ satisfies
conditions (C1) through (C4) for given
$i$, $j$, and $p$, then so does
$cf+h$, where $c\in\mathbb Z$ with $(c,p)=1$
and $h$ is a modular form of the
same weight and $q$-order as $f$ such that
$h(q)\equiv 0\mod p$.  The precise extent to which
a given $f_{i/j,k}$ fails to be unique will be the
subject of future work.   The goal
here is to produce {\em at least one} candidate
for any $f_{i/j,1}$ that we study, and
an assertion of the form $f_{i/j,1}=f$
is understood to mean that $f$ is one of
many possible
choices for $f_{i/j,1}$ in $M_{i(p^2-1)}$.

Our results generalize sample
computer-aided computations of Behrens and Laures \cite{Beh:Lau}.
To state their results, and our
generalizations thereof, requires
additional notation.  In the divided
beta family, we write $\beta_{i/j}$ for
$\beta_{i/j,1}$, we write $\beta_i$ for $\beta_{i/1}$,
and we abbreviate the corresponding $f_{i/j,k}$ similarly.
In the
space of modular forms, $\D\in M_{12}$ is the
Ramanujan delta function and $E_4\in M_4$ is the
weight 4 Eisenstein series, 
whose definitions we will recall
in Section \ref{ModularForms}.
Behrens and Laures computed the following
modular forms at the prime 5:
\begin{align}\label{powersofD}
\begin{split}
f_1=\D^{2},\quad
f_2=\D^{4},\quad
f_3=\D^{6},\quad
f_4=\D^{8},\quad
f_{5/5}=\D^{10},
\end{split}
\end{align}
and
\begin{equation}\label{FTFTN}
f_{25/29}=\D^{50}+4\D^{42}E_4^{24}
+3\D^{41}E_4^{27}.
\end{equation}
We shall see that the striking
pattern of powers of $\D$
in \eqref{powersofD}
does indeed persist at the
prime 5 and elsewhere.

The formula for $f_{25/29}$
in \eqref{FTFTN} is
the catalyst for our 5-primary computations.  
This is despite the fact that
the corresponding element
$\beta_{25/29}$ in homotopy theory is actually
not a member of the 5-primary divided beta family
(see Lemma \ref{betas}), but rather
can be properly interpreted as an element
of a different, closely related
spectral sequence (see \cite{Beh:Lau}).
The fact that $\D^{50}+4\D^{42}E_4^{24}
+3\D^{41}E_4^{27}$ is indeed
a modular form satisfying conditions
(C1) through (C4) in the case $i=25$,
$j=29$, and $p=5$, is instructive.
For example, while $\beta_{25/29}$
is not a 5-primary divided beta family element,
$\beta_{50/29}$ is, and
the way $f_{25/29}$
is built as the sum of a power of $\D$ and two
``correction terms'' serves as a model for how to
compute $f_{50/29}$.  In fact,
it is by identifying appropriate
analogs of these ``correction terms''
that we are able to compute
all modular forms $f_{i/j}$ at the prime 5
that are not simply powers of $\D$.

Before stating the main theorem
let us define, for $n\geq2$ and $r\geq1$,
modular forms
\begin{align*}
C_{0,n,r}&=4r\D^{42\cdot5^{n-2}+2(r-1)5^n}E_4^{24\cdot5^{n-2}},\\
D_{0,n,r}&=3r\D^{41\cdot5^{n-2}+2(r-1)5^n}E_4^{27\cdot5^{n-2}}
\end{align*}
in $M_{24r\cdot5^{n}}$,
so that $C_{0,2,1}+D_{0,2,1}=4\D^{42}E_4^{24}
+3\D^{41}E_4^{27}$
is precisely the summand appearing
in \eqref{FTFTN}.
Moreover, for $n\geq3$, $1\leq m\leq n-2$, and $r\geq1$, define
\begin{align*}
C_{m,n,r}&=3r\D^{8\cdot5^{n-1}
	+2\cdot5^{n-m-2}+2(r-1)5^n}
E_4^{6\cdot5^{n-1}-6\cdot5^{n-m-2}},\\
D_{m,n,r}&=r\D^{8\cdot5^{n-1}
	+5^{n-m-2}+2(r-1)5^n}
E_4^{6\cdot5^{n-1}-3\cdot5^{n-m-2}}
\end{align*}
in $M_{24r\cdot5^n}$.  Finally,
given a prime $p$ and
any integer $n\geq0$, define
\[
a_n=\begin{cases}
0,&\text{if }n=0,\\
p^n+p^{n-1}-1,&\text{if }n\geq1.
\end{cases}
\]

\begin{thm}\label{main_5}
Given a 5-primary divided beta family
element $\beta_{i/j}$ of order 5,
where $i=r\cdot5^n$ with $(r,5)=1$,
the corresponding modular form
is $f_{i/j}=\D^{2r\cdot5^n}$
except when the inequalities $r>1$ and $5^n+1\leq j\leq a_n$
hold simultaneously.  For such $r$ and $j$, there is a positive
integer $u$, $1\leq u\leq n-1$, such that
\[
5^n+5^{n-1}-5^{n-u}+1\leq j\leq5^n+5^{n-1}-5^{n-u-1}
\]
in which case
\[
f_{i/j}=\begin{cases}
\D^{2r\cdot5^n}+
\displaystyle\sum_{m=0}^{u-1}
(C_{m,n,r}+D_{m,n,r})
&\text{if}\quad j>5^n+5^{n-1}-5^{n-u}+2\cdot5^{n-u-1},\\
&\, \\
\D^{2r\cdot5^n}+\displaystyle\sum_{m=0}^{u-2}
(C_{m,n,r}+D_{m,n,r})+C_{u-1,n,r}
&\text{if}\quad j\leq5^n+5^{n-1}-5^{n-u}+2\cdot5^{n-u-1}.\\
\end{cases}
\]
\end{thm}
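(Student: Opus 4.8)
The plan is to verify conditions (C1)–(C4) of Behrens' theorem directly for the candidate forms, reducing everything to explicit $q$-expansion bookkeeping in $\mathbb{Z}[[q]]$ and a structural analysis of the mod-$5$ reductions of spaces of modular forms. First I would record, via Lemma \ref{betas}, exactly which pairs $(i,j)$ with $i=r\cdot5^n$ index genuine order-$5$ divided beta family elements; this pins down the range $1\le j\le a_n$ (and $r>1$ for the non-trivial cases) and justifies the case split by the auxiliary integer $u$. The weight check is immediate: $\D^{2r\cdot5^n}$ has weight $24r\cdot5^n=i(p^2-1)$ at $p=5$, and by construction each $C_{m,n,r}$ and $D_{m,n,r}$ lies in $M_{24r\cdot5^n}$, so every displayed $f_{i/j}$ has the correct weight $i(p^2-1)$.

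Next I would handle (C1) and (C2) together by computing $\ord_q f_{i/j}$. Since $\D(q)=q\prod(1-q^n)^{24}$ has $\ord_q=1$ and $E_4(q)\equiv1\bmod q$, each monomial $\D^aE_4^b$ contributes $q$-order exactly $a$, so $\ord_q f_{i/j}$ equals the minimum of the $\D$-exponents appearing, and the leading coefficient is a unit mod $5$ (it is $1$ from the $\D^{2r\cdot5^n}$ term unless a correction term has strictly smaller $\D$-exponent, in which case one reads off $3r$ or $r$, units mod $5$). This gives (C1) outright, and for (C2) I would show that $12\cdot\ord_q f_{i/j}$ lands in the prescribed window — equal to $(p^2-1)i-(p-1)j-2$ in the borderline second case of the theorem and strictly larger otherwise — by comparing the minimal $\D$-exponent, namely $8\cdot5^{n-1}+5^{n-u-1}+2(r-1)5^n$ (or $2\cdot5^{n-u-1}$ more, depending on the case), against $\tfrac{1}{12}\big((p^2-1)i-(p-1)j\big)$; this is where the two bounds on $j$ in terms of $u$ are used, and it is essentially the arithmetic identity that forces the case distinction.

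The heart of the argument is (C3) and (C4), and I expect (C3) to be the main obstacle. For (C4) I would, for each prime $\ell\ne5$, apply the operator $L_\ell$ (defined in Section \ref{ModularForms}) and exhibit the required $g\in M_{(p^2-1)i-(p-1)j}(\ell)$ reducing to $L_\ell f_{i/j}$ mod $5$; since $L_\ell f_{i/j}$ is visibly a $\mathbb{Z}$-combination of products of $\D$ and $E_4$ after applying $L_\ell$, and these live in the larger ring $M_*(\ell)$, one needs only a weight/lowest-order matching, which the window from (C2) supplies. For (C3) — that $f_{i/j}$ is \emph{not} congruent mod $5$ to any modular form of weight $<i(p^2-1)$ — I would argue contrapositively using the structure of $M_*$ modulo $5$: by the classical description (Serre, Swinnerton-Dyer), $M_*\otimes\mathbb{F}_5\cong\mathbb{F}_5[\bar E_4,\bar\Delta]$ with the single relation $\bar E_6^2=\bar E_4^3-1728\bar\Delta$ repackaged, and $\bar A:=\bar E_4$ has the property that multiplication by $\bar A$ (filtration-wise) detects the minimal weight in which a given mod-$5$ form occurs; concretely, $E_4\equiv1\bmod5$, so reducing weight is governed by the Hasse invariant $E_{p-1}=E_4$, and a form is congruent to one of lower weight iff it is divisible by $(E_4-1)$ after reduction, equivalently iff its reduction is a $5$th-power-twisted form. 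The key computation is therefore to show $\bar f_{i/j}\in\mathbb{F}_5[\bar\Delta,\bar E_4]$ is not divisible by $\bar E_4^{5}-\bar E_4$... more precisely, not in the image of the weight-lowering map; I would do this by exhibiting a specific low-degree $q$-coefficient (or a specific monomial in the $\bar\Delta,\bar E_4$ presentation) that obstructs such a congruence, using that the correction terms $C_{m,n,r}+D_{m,n,r}$ were \emph{designed} — following the model of $f_{25/29}$ in \eqref{FTFTN} — precisely to make $\bar f_{i/j}$ primitive of weight exactly $i(p^2-1)$ mod $5$ while preserving the $q$-order constraint (C2). Making this obstruction explicit uniformly in $r$, $n$, and $u$ — rather than case by case as Behrens–Laures did by computer — is the crux, and I would organize it by induction on $n$ with the $u$-indexed correction terms playing the role of an inductive ``defect'' that cancels the leading failure of primitivity at each stage.
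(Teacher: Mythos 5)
Your outline for (C1)--(C3) is broadly workable, but it misplaces the difficulty and then skips the actual heart of the theorem, which is (C4). You assert that for (C4) ``one needs only a weight/lowest-order matching, which the window from (C2) supplies.'' This is not so. The required $g$ has weight $24i-4j$, strictly smaller than the weight $24i$ of $L_\ell f_{i/j}$, and by Serre's congruence criterion (Proposition \ref{Serre}) such a $g$ exists if and only if $L_\ell f_{i/j}$ is divisible by $E_4^{j}$ in $M_*(\ell)_{\mathbb Z/5}$. Nothing about $q$-orders or weights makes this automatic; it is exactly the condition the correction terms $C_{m,n,r}$ and $D_{m,n,r}$ are engineered to achieve, and verifying it is the entire technical content of the proof. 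The paper first invokes Behrens' rigidity (Proposition \ref{rigidity}, with $\ell_0=2$ a topological generator of $\mathbb Z_5^\times$) to reduce to the single prime $\ell=2$ --- your plan to treat ``each prime $\ell\neq5$'' separately is not feasible without this --- and then works in $M_*(2)_{\mathbb Z/5}=\mathbb Z/5[\mu,\e]$ with the substitution $y=4\mu/\e+1$, so that divisibility by $E_4^{j}$ becomes divisibility by $y^{j}$ of an explicit polynomial (Proposition \ref{method}). The expansions of $L_2\D^{2r\cdot5^n}$ and of each $L_2C_{m,n,r}$, $L_2D_{m,n,r}$ (Lemmas \ref{L2ONDELTA}, \ref{CDEXTREME}, \ref{CDGENERAL}) then exhibit a telescoping cancellation of leading $y$-powers that raises the divisibility to exactly the exponent demanded by $j$. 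Without some version of this computation your proof has no content at the crucial step.

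Two further corrections. For (C2), you claim $12\cdot\ord_q f_{i/j}$ equals $(p^2-1)i-(p-1)j-2$ in the borderline case; at $p=5$ this equality can never occur, since $24i-4j-2$ is not divisible by $12$, so (C2) reduces to the strict inequality $\ord_q f_{i/j}>2i-j/3$ in every case (Lemma \ref{qineq}), and your proposed equality branch would fail. For (C3), the correct criterion from Proposition \ref{Serre} is divisibility of the mod-$5$ reduction by the Hasse invariant $E_4$ itself in $(M_*)_{\mathbb Z/5}$, not by $E_4-1$ or $\bar E_4^5-\bar E_4$; the argument is then one line, since each $f_{i/j}$ contains the basis monomial $\D^{2i}E_4^{0}$ with coefficient $1$ (Proposition \ref{setup}(b)). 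Contrary to your assessment, (C3) is immediate and (C4) is the crux.
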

In the case $r>1$, Theorem \ref{main_5} shows that 
the modular forms $f_{i/j}$ can all be recovered
from $f_{i/a_n}$ by trimming off some number
of summands depending on the size of $j$.  
Example \ref{1250} below clearly exhibits this phenomenon.  
It is therefore worth noting that the formula
for $f_{i/a_n}$ in Theorem \ref{main_5} has a recursive
interpretation.  
\begin{cor}\label{main_5_cor}
For $n\geq1$ and $r\geq2$, $f_{r\cdot5^{n+1}/a_{n+1}}=
(f_{r\cdot5^n/a_n})^5
+C_{n-1,n+1,r}+D_{n-1,n+1,r}$.
\end{cor}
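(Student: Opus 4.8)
The plan is to verify the identity $f_{r\cdot5^{n+1}/a_{n+1}}=(f_{r\cdot5^n/a_n})^5+C_{n-1,n+1,r}+D_{n-1,n+1,r}$ by matching it against the closed-form description of $f_{i/a_n}$ supplied by Theorem \ref{main_5}. First I would specialize that theorem to $j=a_n$. Since $a_n=p^n+p^{n-1}-1=5^n+5^{n-1}-1$ and the interval for the parameter $u$ is $5^n+5^{n-1}-5^{n-u}+1\leq j\leq 5^n+5^{n-1}-5^{n-u-1}$, taking $j=a_n$ forces $5^{n-u-1}=1$, i.e.\ $u=n-1$; one then checks $a_n>5^n+5^{n-1}-5^{n-(n-1)}+2\cdot 5^{n-(n-1)-1}=5^n+5^{n-1}-3$, so the first branch of the case split applies. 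Hence
\[
f_{r\cdot5^n/a_n}=\D^{2r\cdot5^n}+\sum_{m=0}^{n-2}\bigl(C_{m,n,r}+D_{m,n,r}\bigr).
\]
Doing the same at level $n+1$ gives $u=n$ and
\[
f_{r\cdot5^{n+1}/a_{n+1}}=\D^{2r\cdot5^{n+1}}+\sum_{m=0}^{n-1}\bigl(C_{m,n+1,r}+D_{m,n+1,r}\bigr).
\]

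Next I would compute $(f_{r\cdot5^n/a_n})^5$. Because we are comparing modular forms and the claimed equality is an equality in $M_*$, a naive expansion of the fifth power of a sum of monomials in $\D$ and $E_4$ is not obviously going to collapse to the right-hand side on the nose; the honest statement must be that the two sides agree as elements of $M_{24r\cdot 5^{n+1}}$, and since $f_{i/j}$ is only well-defined up to the ambiguity described after Behrens' theorem (adding a $p$-torsion form of the same weight and $q$-order), the natural reading is that the corollary is an equality \emph{modulo} that ambiguity, or more precisely mod $5$ together with a $q$-order check. So the cleaner route is: reduce everything mod $5$, use the Frobenius congruence $g(q)^5\equiv g(q^5)\bmod 5$ on $q$-expansions, and observe that raising $\D$ and $E_4$ to the fifth power mod $5$ simply multiplies all exponents by $5$ up to an error governed by $\D^5\equiv \D(q^5)$ etc. Concretely, mod $5$ one has $(\D^{2r\cdot5^n})^5\equiv \D^{2r\cdot5^{n+1}}$ and $(C_{m,n,r}+D_{m,n,r})^5$ contributes, after the fifth-power map, essentially $C_{m+1,n+1,r}+D_{m+1,n+1,r}$ — here I would carefully track the exponents: $C_{m,n,r}$ has $\D$-exponent $8\cdot5^{n-1}+2\cdot5^{n-m-2}+2(r-1)5^n$ and $E_4$-exponent $6\cdot5^{n-1}-6\cdot5^{n-m-2}$, and multiplying both by $5$ yields $8\cdot5^{n}+2\cdot5^{n-m-1}+2(r-1)5^{n+1}$ and $6\cdot5^{n}-6\cdot5^{n-m-1}$, which is exactly the pair defining $C_{m+1,n+1,r}$ (with index shift $m\mapsto m+1$, $n\mapsto n+1$). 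The $m=0$ term of the level-$n$ sum involves $C_{0,n,r},D_{0,n,r}$, whose exponent patterns are $42\cdot5^{n-2},41\cdot5^{n-2},24\cdot5^{n-2},27\cdot5^{n-2}$; I would check that after the fifth-power map these match $C_{1,n+1,r},D_{1,n+1,r}$, using the arithmetic identities $42\cdot5^{n-2}\cdot5=8\cdot5^{n}+2\cdot5^{n-2}$ and $41\cdot5^{n-2}\cdot5=8\cdot5^n+5^{n-2}$ and similarly for the $E_4$-exponents, so the ``$C_{0},D_{0}$'' family and the ``$C_{m},D_{m}$ with $m\geq1$'' family are in fact the same under reindexing — this is the coherence built into the definitions in Section \ref{ModularForms}.

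Putting the two computations together, modulo $5$ and the cross terms in the multinomial expansion,
\[
(f_{r\cdot5^n/a_n})^5\equiv \D^{2r\cdot5^{n+1}}+\sum_{m=0}^{n-2}\bigl(C_{m+1,n+1,r}+D_{m+1,n+1,r}\bigr)
=\D^{2r\cdot5^{n+1}}+\sum_{m=1}^{n-1}\bigl(C_{m,n+1,r}+D_{m,n+1,r}\bigr),
\]
which is exactly $f_{r\cdot5^{n+1}/a_{n+1}}$ with the $m=0$ summand $C_{0,n+1,r}+D_{0,n+1,r}$ removed; but by the definitions, $C_{0,n+1,r}+D_{0,n+1,r}=C_{n-1,n+1,r}+D_{n-1,n+1,r}$ when the index $n-1$ is the last one before the ``$0$-family'' takes over — more carefully, for $n+1$ the allowed range of the first index is $1\leq m\leq (n+1)-2=n-1$ together with the separate $0$-family, and the recursion peels off precisely the $m=0$ entry of level $n+1$, which Section \ref{ModularForms}'s conventions identify with $C_{n-1,n+1,r}+D_{n-1,n+1,r}$. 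Hence adding $C_{n-1,n+1,r}+D_{n-1,n+1,r}$ back recovers $f_{r\cdot5^{n+1}/a_{n+1}}$, proving the corollary.

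The main obstacle I anticipate is the bookkeeping in the previous paragraph: reconciling the two separate families ($C_{0,n,r},D_{0,n,r}$ versus $C_{m,n,r},D_{m,n,r}$ for $m\geq1$) under the fifth-power/Frobenius map and pinning down exactly which summand the recursion adds. The exponent arithmetic is routine but unforgiving — a single misindexed power of $5$ breaks the match — so the real work is a careful, simultaneous induction showing that the fifth-power map sends the level-$n$ correction terms bijectively onto all-but-one of the level-$(n+1)$ correction terms, with the omitted one being $C_{n-1,n+1,r}+D_{n-1,n+1,r}$. A secondary, more conceptual point that must be addressed up front is the precise sense of the equality: since $f_{i/j}$ is defined only up to adding a weight- and $q$-order-matching form that vanishes mod $5$, I would state at the outset that the identity is to be read as an equality of the specific representatives produced by Theorem \ref{main_5}, and then the fifth-power congruence $g(q)^5\equiv g(q^5)\bmod 5$ combined with the $q$-order computation (each correction term has strictly larger $q$-order than $\D^{2r\cdot5^n}$, so orders behave predictably under the fifth power) upgrades the mod-$5$ identity to the asserted equality of representatives.
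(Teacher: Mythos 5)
Your overall strategy---specialize Theorem \ref{main_5} to $j=a_n$ and $j=a_{n+1}$, then compare via the mod $5$ Frobenius $(x_1+\cdots+x_k)^5\equiv x_1^5+\cdots+x_k^5$---is the right one, and it matches the paper's intent (the paper offers no explicit proof, but the remark that the two formulas ``differ in $M_{24i}$ but are equivalent modulo 5'' signals exactly this comparison). Your identification of $u=n-1$ and of the first branch at $j=a_n$ is also correct. However, there is a concrete indexing error in the key step, and it forces you into a false identification at the end.

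The fifth-power map fixes the first index and increments the second: $C_{m,n,r}^5\equiv C_{m,n+1,r}\pmod 5$, not $C_{m+1,n+1,r}$. Indeed, multiplying the $\D$-exponent of $C_{m,n,r}$ by $5$ gives $8\cdot5^{n}+2\cdot5^{n-m-1}+2(r-1)5^{n+1}$, which is the $\D$-exponent of $C_{m,n+1,r}$ (since $(n+1)-m-2=n-m-1$); the form $C_{m+1,n+1,r}$ that you name instead has $\D$-exponent $8\cdot5^n+2\cdot5^{n-m-2}+2(r-1)5^{n+1}$, which is different. The same slip occurs for the $m=0$ family: $42\cdot5^{n-1}\neq 8\cdot5^n+2\cdot5^{n-2}$ (the left side is $210\cdot5^{n-2}$, the right is $202\cdot5^{n-2}$), so $C_{0,n,r}^5\equiv C_{0,n+1,r}$, not $C_{1,n+1,r}$. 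Because of this shift you conclude that the summand missing from $(f_{r\cdot5^n/a_n})^5$ is the $m=0$ term of level $n+1$, and you then assert that ``Section 3's conventions identify'' $C_{0,n+1,r}+D_{0,n+1,r}$ with $C_{n-1,n+1,r}+D_{n-1,n+1,r}$. No such identification holds: already for $n=2$ one has $C_{0,3,r}=4r\D^{210+2(r-1)125}E_4^{120}$ while $C_{1,3,r}=3r\D^{202+2(r-1)125}E_4^{144}$. With the correct bookkeeping the argument closes cleanly and without any such identification: $(f_{r\cdot5^n/a_n})^5\equiv\D^{2r\cdot5^{n+1}}+\sum_{m=0}^{n-2}(C_{m,n+1,r}+D_{m,n+1,r})\pmod 5$, which is the level-$(n+1)$ formula missing precisely its top summand $m=n-1$, namely $C_{n-1,n+1,r}+D_{n-1,n+1,r}$ (using $(cr)^5\equiv cr\pmod 5$ for the coefficients). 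One final small point: the paper explicitly says the two formulas \emph{differ} in $M_{24i}$, so your closing promise to ``upgrade the mod-$5$ identity to the asserted equality of representatives'' should instead be the observation that the right-hand side has the correct weight and $q$-order and is congruent mod $5$ to the Theorem \ref{main_5} representative, hence is another admissible choice of $f_{r\cdot5^{n+1}/a_{n+1}}$ in the sense fixed in the introduction.
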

The formulas for a given $f_{i/a_n}$ 
obtained from Theorem \ref{main_5}
and Corollary \ref{main_5_cor}, respectively, will differ
in $M_{24i}$, but they will be equivalent
modulo 5.    
\begin{exmp}\label{preview}
To preview the concepts
and computational methods behind the
proof of Theorem \ref{main_5},
let us employ them to outline a
re-derivation of Equation \eqref{FTFTN}.
Since $f_{25/29}\in M_{600}$, it will
follow from Proposition \ref{basis}
and conditions (C1) and (C3)
that
\[
f_{25/29}=\D^{50}+\sum_{m=1}^{50}c_m\D^{50-m}
E_4^{3m}
\]
for integers $c_i\in\mathbb Z$.
The demand on $q$-order given by
condition (C2), together with Proposition
\ref{setup}(c),
will imply that $c_{10}=c_{11}=\cdots
=c_{50}=0$.

To verify
condition (C4) at a prime $\ell\neq5$,
Proposition \ref{Serre} will imply
that it suffices to show
$L_{\ell}f_{25/29}$ is
divisible by $E_4^{29}$ in
$M_*(\ell)_{\mathbb Z/5}$.
Proposition \ref{rigidity} will imply
that it suffices to do this for
the case $\ell=2$.
In $M_*(2)_{\mathbb Z/5}$, we
shall see that $L_2\D^{50}$ is divisible
by $E_4^{25}$ by Lemma
\ref{L2ONDELTA}.
The $E_4$-divisibility
of $L_2\D^{50-m}E_4^{3m}$ for $1\leq m\leq9$
can be computed directly in a similar
manner using Lemma \ref{L2TERM}:\vskip.1truein
\setlength{\tabcolsep}{2pt}
\begin{center}
\begin{tabular}{c|c|c|c|c|c|c|c|c|c}
$m$&1&2&3&4&5&6&7&8&9\\\hline
term&$L_2\D^{49}E_4^3$&$L_2\D^{48}E_4^6$&
$L_2\D^{47}E_4^9$&$L_2\D^{46}E_4^{12}$&$L_2\D^{45}E_4^{15}$
&$L_2\D^{44}E_4^{18}$&$L_2\D^{43}E_4^{21}$&$L_2\D^{42}E_4^{24}$
&$L_2\D^{41}E_4^{27}$\\\hline
$E_4$-div.&$E_4^3$&$E_4^7$&$E_4^9$&$E_4^{13}$&$E_4^{15}$&
$E_4^{19}$&$E_4^{21}$&$E_4^{25}$&$E_4^{27}$
\end{tabular}
\end{center}\vskip.1truein
Thus, for $L_2f_{25/29}$ to have the required
$E_4$-divisibility,
we much choose $c_1=c_2=\cdots=c_7=0$.
From there, setting $c_8=4$ makes the term
\[
L_2(\D^{50}+c_8\D^{42}E_4^{24})
\]
divisible by $E_4^{27}$ in
$M_*(2)_{\mathbb Z/5}$, while all other choices
for $c_8$ (modulo 5) keep its $E_4$-divisibility
at $E_4^{25}$.  Subsequently setting
$c_9=3$ makes
\[
L_2(\D^{50}+c_8\D^{42}E_4^{24}
+c_9\D^{41}E_4^{27})
\]
divisible by $E_4^{29}$ in
$M_*(2)_{\mathbb Z/5}$, and no other choice
of $c_9$ (modulo 5) accomplishes this.  
\end{exmp}
\begin{exmp}\label{1250}
Consider the conclusion of
Theorem \ref{main_5} in the
case $n=4$ and $r=2$.
The order 5 divided beta family elements $\beta_{i/j}$
with $i=2\cdot5^4=1250$ are
\[
\{\beta_{1250/j}:1\leq j\leq749\text{ and }
j\neq5,10,15,\ldots,125\}
\]
(see Lemma \ref{betas}).  By Theorem \ref{main_5},
$f_{1250/j}=\D^{2500}$ except when $626\leq j\leq749$,
in which case $f_{1250/j}=\D^{2500}+\C$ where
\[
\C=\begin{cases}
3\D^{2300}E_4^{600}+
\D^{2275}E_4^{675}+
\D^{2260}E_4^{720}+
2\D^{2255}E_4^{735}+
\D^{2252}E_4^{744}+
2\D^{2251}E_4^{747}
&\text{if }\quad j=748,749,\\
3\D^{2300}E_4^{600}+
\D^{2275}E_4^{675}+
\D^{2260}E_4^{720}+
2\D^{2255}E_4^{735}+
\D^{2252}E_4^{744}&\text{if }\quad j=746,747,\\
3\D^{2300}E_4^{600}+
\D^{2275}E_4^{675}+
\D^{2260}E_4^{720}+
2\D^{2255}E_4^{735}&\text{if }\quad 736\leq j\leq745,\\
3\D^{2300}E_4^{600}+
\D^{2275}E_4^{675}+
\D^{2260}E_4^{720}&\text{if }\quad 726\leq j\leq735,\\
3\D^{2300}E_4^{600}+
\D^{2275}E_4^{675}&\text{if }\quad 676\leq j\leq725,\\
3\D^{2300}E_4^{600}&\text{if }\quad 626\leq j\leq675.
\end{cases}
\]
In each of the six cases,
the term $\C$ raises the $E_4$-divisibility
of $L_2f_{1250/j}$ in
$M_*(2)_{\mathbb Z/5}$
as required by
condition (C4), while keeping
the $q$-order of $f_{1250/j}$
sufficiently large as required
by condition (C2).
\end{exmp}
\begin{rem}
In addition to the modular forms
in Equations \eqref{powersofD} and
\eqref{FTFTN},
Behrens and Laures computed
$f_{25/5,2}=\D^{50}$, where $\beta_{25/5,2}$
is the first order 25 element one encounters
in the 5-primary divided beta family.  
Computation of the modular forms $f_{i/j,k}$
for $k>1$ will be the subject of a future paper.
\end{rem}

This paper is structured as follows.
In Section \ref{Divided} we recall
how to enumerate the elements of order $p$
in the $p$-primary divided beta family.
In Section \ref{ModularForms} we list results
from the theory of modular forms relevant
to our computations.  Section \ref{at5}
is the technical heart of the paper,
where we compute at the prime 5 and prove
Theorem \ref{main_5}.
In Section \ref{atotherprimes}
we exhibit our computational methods
at the primes 7, 11, 13, and 677,
and for arbitrary primes $p\geq5$ we
conjecture which of the modular forms arising from Behrens' theorem are simply powers
of the Ramanujan delta function.
\section{The divided beta family}
\label{Divided}
In this section we show how to
enumerate the order $p$ elements
$\beta_{i/j}$
of the $p$-primary divided beta family.
Given $p$, let $a_n$ be defined
as in Section \ref{Intro}.
If we write $i=rp^n$
where $(r,p)=1$, then
\cite[Theorem~2.6]{MRW} implies that the
entire $p$-primary
divided beta family comprises the elements
$\beta_{i/j,k}$ for ordered triples of positive
integers $(i,j,k)$ subject to the following
rules:
\begin{enumerate}[i)]
\item if $r=1$, then $1\leq j\leq p^n$,
\item $p^{k-1}|j\leq a_{n-k+1}$,
\item if $p^k|j$, then $j>a_{n-k}$.
\end{enumerate}
The order $p$ divided beta family elements $\beta_{i/j}$
are therefore characterized as follows.
\begin{lem}\label{betas}
The divided beta family elements of
order $p$ of the form $\beta_{p^n/j}$
are
\[
\{\beta_{p^n/j}:1\leq j\leq p^n\text{ and }j\neq p,2p,\ldots, 
a_{n-2}p\}.
\]
and those of the form $\beta_{rp^n/j}$
where $r>1$ and $(r,p)=1$ are
\[
\{\beta_{rp^n/j}:1\leq j\leq a_n\text{ and }j\neq p,2p,\ldots, 
a_{n-2}p\}.
\]
\end{lem}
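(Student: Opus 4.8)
The plan is to specialize the three enumeration rules (i)--(iii), quoted from \cite[Theorem~2.6]{MRW}, to the case $k=1$ and then translate the resulting conditions on $j$ into the explicit sets claimed. First I would set $k=1$ throughout. Rule (ii) becomes $p^{0}=1\mid j\leq a_{n-1+1}=a_n$, i.e.\ simply $1\leq j\leq a_n$; and rule (iii) becomes: if $p\mid j$, then $j>a_{n-1}$. Rule (i) is the special provision for $r=1$, replacing the bound $j\leq a_n$ by the sharper bound $j\leq p^n$. So for $r=1$ the admissible $j$ are those with $1\leq j\leq p^n$ subject to the constraint that any $j$ divisible by $p$ must exceed $a_{n-1}=p^{n-1}+p^{n-2}-1$; equivalently, the only multiples of $p$ that are \emph{excluded} are those $j=tp$ with $1\leq tp\leq a_{n-1}$, that is $1\leq t\leq a_{n-1}/p$. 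Since $a_{n-2}=p^{n-2}+p^{n-3}-1$ and $a_{n-2}\,p = p^{n-1}+p^{n-2}-p < a_{n-1} < (a_{n-2}+1)p$, the integers $t$ with $tp\leq a_{n-1}$ are exactly $t=1,\dots,a_{n-2}$, giving the excluded set $\{p,2p,\dots,a_{n-2}p\}$. This yields the first displayed set.

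For $r>1$ with $(r,p)=1$, rule (i) does not apply, so the bound is the full $1\leq j\leq a_n$ from rule (ii), while rule (iii) excludes exactly the same multiples of $p$ as before, namely $j=p,2p,\dots,a_{n-2}p$ (the computation $tp\leq a_{n-1}\iff 1\leq t\leq a_{n-2}$ is identical and does not depend on $r$). This gives the second displayed set. I would also note the degenerate small-$n$ cases: when $n\leq 1$ the quantity $a_{n-2}$ should be read as $a_{-1}$ or $a_0=0$, so the excluded list $\{p,\dots,a_{n-2}p\}$ is empty, and the formulas still read correctly (for $n=0$, $r=1$ one gets the single element $\beta_{1/1}$, consistent with $1\leq j\leq p^0=1$).

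The only genuinely substantive point---and the one I expect to require the most care---is the arithmetic verifying that the condition ``$p\mid j$ and $j\leq a_{n-1}$'' carves out precisely $\{p,2p,\dots,a_{n-2}p\}$, i.e.\ the inequality chain $a_{n-2}\,p\leq a_{n-1}<(a_{n-2}+1)p$. The left inequality is $p^{n-1}+p^{n-2}-p\leq p^{n-1}+p^{n-2}-1$, which holds for all $p\geq2$; the right inequality is $p^{n-1}+p^{n-2}-1<p^{n-1}+p^{n-2}-p+p=p^{n-1}+p^{n-2}$, i.e.\ $-1<0$, also immediate. Everything else is a direct unwinding of definitions, so once this bookkeeping is pinned down the lemma follows.
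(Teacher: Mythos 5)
Your proof is correct and follows essentially the same route as the paper's: specialize rules (i)--(iii) to $k=1$, note that rule (ii) gives the bound $j\leq a_n$ (superseded by $j\leq p^n$ when $r=1$ via rule (i)), and observe that rule (iii) excludes precisely the multiples of $p$ not exceeding $a_{n-1}$, namely $p,2p,\dots,a_{n-2}p$. You actually supply more of the arithmetic than the paper does; the one wrinkle is that your verification of $\lfloor a_{n-1}/p\rfloor=a_{n-2}$ uses $a_{n-2}=p^{n-2}+p^{n-3}-1$ and so tacitly assumes $n\geq3$, whereas at $n=2$ the convention $a_0=0$ makes the displayed excluded list formally empty even though $j=p$ is excluded (cf.\ the $\beta_{25/j}$ example) --- but this imprecision lies in the statement of the lemma itself and is equally present in the paper's proof.
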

\begin{proof}
Consider first the case $r=1$.
Rule i) allows $j$ to range between
1 and $p^n$.  Since $k=1$,
rule ii) does not apply
any additional constraints.
Rule iii) says that $j$ cannot
be both a multiple of $p$ and
$\leq a_{n-1}=p^{n-1}+p^{n-2}-1$.
Therefore, we must insist that
\[
j\neq p,2p,\ldots,p^{n-1}-p^{n-2}-5=a_{n-2}p.
\]

In the case $r>1$, rule i)
does not apply and rule ii)
allows $j$ to range between
1 and $a_n$.  Rule iii)
disallows the same values of $j$
as in the case $r=1$.
\end{proof}
\begin{exmp}\label{exmp_i}
Given $p$, the element $\beta_i=\beta_{i/1,1}$ is a divided
beta family element of order $p$ for any $i\geq1$.
\end{exmp}
\begin{exmp}\label{exmp_ii}
Suppose $p=5$.  The divided beta family elements
of the form $\beta_{5r/j}$ are
\[
\beta_{5r/5},\quad
\beta_{5r/4},\quad
\beta_{5r/3},\quad
\beta_{5r/2},\quad
\beta_{5r/1}=\beta_{5r}.
\]
Those of the form
$\beta_{25/j}$ are
\[
\beta_{25/25},\quad \beta_{25/24},
\quad\ldots,\quad\beta_{25/6},\quad\beta_{25/4},
\quad\ldots,\quad\beta_{25/1}=\beta_{25}
\]
while those of the form $\beta_{25r/j}$ with $r>1$
are
\[
\beta_{25r/29},\quad \beta_{25r/28},\quad\ldots,\quad
\beta_{25r/6},\quad\beta_{25r/4},\quad
\ldots,\quad\beta_{25r/1}=\beta_{25r}.
\]
In particular,
as noted in Section \ref{Intro},
$\beta_{50/29}$ is a 5-primary
divided beta family element, while $\beta_{25/29}$
is not.
\end{exmp}
\section{Modular forms}\label{ModularForms}

In this section we record
facts from the theory of modular forms
required for our computations.
Much of this can be found in
\cite{Lang:Mod} unless otherwise noted.

If $t\geq4$ is an even integer
and $q=e^{2\pi iz}$, the
{\em weight $t$ Eisenstein series}
$E_t\in M_t$ is given by the formula
\[
E_t(z)=\dfrac12\sum_{(m,n)=1}\dfrac1{(mz+n)^t}
=1-\dfrac{2t}{B_t}\sum_{n=1}^{\infty}
\sigma_{t-1}(n)q^n
\]
where $B_t$ is the $t$-th Bernoulli number
and $\sigma_{t-1}(n)$ is the sum of the $(t-1)$st
powers of the divisors of $n$.
The {\em Ramanujan Delta function}
$\D$ is a modular form of weight 12
given by the formula
\[
\D(z)=\dfrac{E_4^3(z)-E_6^2(z)}{1728}
=\sum_{n=1}^{\infty}\tau(n)q^n=q+\cdots.
\]
The graded (by weight) ring $M_*$ of
all modular forms over $\mathbb Z$
for the full modular group is,
by a result of Deligne \cite[Proposition~6.1]{Del:Ell},
\[
M_*=\dfrac{\mathbb Z[E_4,E_6,\D]}{1728\D=E_4^3-E_6^2}.
\]
\begin{prop}{\cite[Theorem~10.4.3]{Lang:Mod}}\label{basis}
If $t\equiv0\mod 4$, then
$\{\D^aE_4^b:0<a,b\in\mathbb Z, 12a+4b=t\}$
is a $\mathbb Z$-basis for $M_t$.
\end{prop}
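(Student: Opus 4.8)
The plan is to derive this from Deligne's presentation $M_*=\mathbb Z[E_4,E_6,\D]/(1728\D=E_4^3-E_6^2)$ recalled just above, splitting the argument into a spanning statement and a linear-independence statement.

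For spanning: from the presentation, $M_t$ is generated as a $\mathbb Z$-module by the images of the weight-$t$ monomials $E_4^aE_6^c\D^b$ (with $a,b,c\ge0$ and $4a+6c+12b=t$). Rewriting the defining relation as $E_6^2=E_4^3-1728\D$ lets one eliminate every occurrence of $E_6^2$ while preserving weight and integrality, so $M_t$ is in fact spanned over $\mathbb Z$ by those monomials with $c\in\{0,1\}$. Now bring in the hypothesis $t\equiv0\bmod4$: since $4a+12b\equiv0\bmod4$, the equation $4a+6c+12b=t$ forces $6c\equiv0\bmod4$, i.e. $c$ is even, hence (as $c\le1$) $c=0$. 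Thus $M_t$ is spanned over $\mathbb Z$ by $\{\D^bE_4^a:a,b\ge0,\ 12b+4a=t\}$, the proposed basis.

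For linear independence over $\mathbb Z$ --- equivalently, that this spanning family is genuinely a basis --- I would use $q$-expansions. Because $E_4=1+O(q)$ and $\D=q+O(q^2)$, the monomial $\D^bE_4^a$ has $q$-expansion $q^b+O(q^{b+1})$, so its order of vanishing at the cusp is exactly $b$; and within the family the constraint $12b+4a=t$ recovers $a$ from $b$, so distinct members have pairwise distinct $q$-orders. Power series with pairwise distinct orders of vanishing are linearly independent over $\mathbb C$, hence a fortiori over $\mathbb Z$; a $\mathbb Z$-spanning family with that property is a $\mathbb Z$-basis, which is the claim. (The extreme monomials $\D^{t/12}$, when $12\mid t$, and $E_4^{t/4}$ belong to the family, so the exponents in the basis are to be read as $a,b\ge0$.)

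The only step that is not routine bookkeeping is the spanning statement over $\mathbb Z$, as opposed to over $\mathbb Q$: this is precisely where Deligne's integral-structure result is indispensable, since $\D$ is not an integral polynomial in $E_4$ and $E_6$, and a naive argument from $M_*\otimes\mathbb Q=\mathbb Q[E_4,E_6]$ would only control $M_t$ after inverting $6$. Granting that input, the remainder is the weight/parity count and the elementary $q$-order observation above.
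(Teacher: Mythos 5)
Your proposal is correct. The paper itself offers no proof of this proposition --- it is cited verbatim to Lang --- so there is nothing internal to compare against; but your derivation is a sound and natural way to recover the statement from the one ingredient the paper does supply, namely Deligne's integral presentation $M_*=\mathbb Z[E_4,E_6,\D]/(1728\D=E_4^3-E_6^2)$. The spanning step (eliminate $E_6^2$ integrally via the relation, then use $t\equiv0\bmod4$ to force the residual $E_6$-exponent to vanish) and the independence step (the monomial $\D^aE_4^b$ has $q$-order exactly $a$, and $b$ is determined by $a$ through $12a+4b=t$, so the $q$-orders are pairwise distinct) are both complete. Your closing remarks are also apt: the condition ``$0<a,b$'' in the statement must indeed be read as $a,b\ge0$ --- this is confirmed by the paper's own use of the basis in Proposition 4.1(a), where both $\D^{t/12}$ and $E_4^{t/4}$ appear --- and the integrality of the spanning set genuinely requires Deligne's result rather than the rational presentation $M_*\otimes\mathbb Q=\mathbb Q[E_4,E_6]$, since $1728$ is not a unit in $\mathbb Z$.
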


Let $M_*(2)[1/2]$ denote the
graded ring of modular forms
for $\Gamma_0(2)$ with 2 inverted.
The structure of this ring
is well-known (see, e.g.,
\cite[Appendix~I]{HBJ}).
\begin{prop}\label{leveltwo}
There exist modular forms
$\delta\in M_2(2)[1/2]$
and $\e\in M_4(2)[1/2]$
with
\begin{align*}
\delta(q)&=4^{-1}+6q+6q^2+
24q^3+6q^4+36q^5+24q^6+\cdots,\\
\e(q)&=16^{-1}-q+7q^2-28q^3
+71q^4-126q^5+196q^6-\cdots,
\end{align*}
and such that $M_*(2)[1/2]
=\mathbb Z[1/2][\delta,\e]$.
\end{prop}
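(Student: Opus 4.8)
The plan is to produce $\delta$ and $\e$ by hand and then pin down the ring structure by a Fourier‑expansion elimination argument, carried out first over $\mathbb C$ and then refined over $\mathbb Z[1/2]$.

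First I would write down the generators explicitly. For the weight‑$2$ form, set
\[
\delta=\tfrac14\bigl(2E_2(2z)-E_2(z)\bigr),\qquad E_2(z)=1-24\sum_{n\ge1}\sigma_1(n)q^n;
\]
the combination $2E_2(2z)-E_2(z)$ is the classical holomorphic weight‑$2$ modular form for $\Gamma_0(2)$ (the quasimodular anomaly of $E_2$ cancels), and a short computation with divisor sums shows that, after the normalization by $\tfrac14$, the $q^n$‑coefficient of $\delta$ for $n\ge1$ equals the positive integer $6\sum_{d\mid n,\ d\ \mathrm{odd}}d$, which matches the listed expansion $\tfrac14+6q+6q^2+24q^3+\cdots$ and lies in $\mathbb Z[1/2]$. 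For the weight‑$4$ form, set
\[
\e=\tfrac1{15}E_4(2z)-\tfrac1{240}E_4(z),
\]
which is in $M_4(\Gamma_0(2))$ because $E_4(z)$ and $E_4(2z)$ are; its $q^n$‑coefficient for $n\ge1$ works out to the integer $16\sigma_3(n/2)-\sigma_3(n)$ (with $\sigma_3(n/2):=0$ for $n$ odd), so $\e\in\mathbb Z[1/2][[q]]$, and the listed values $-1,7,-28,71,-126,196,\dots$ check out. Both constant terms, $\tfrac14$ and $\tfrac1{16}$, are units of $\mathbb Z[1/2]$. This already gives $\mathbb Z[1/2][\delta,\e]\subseteq M_*(2)[1/2]$.

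The difficulty in proving the reverse inclusion by induction on the Fourier expansion is that every monomial $\delta^a\e^b$ has $q$‑order $0$ at $\infty$, so there is nothing to ``peel off.'' The key device is to replace $\e$ by the auxiliary form $\phi:=\e-\delta^2\in M_4(\Gamma_0(2))$: since $\delta^2=\tfrac1{16}+3q+\cdots$ and $\e=\tfrac1{16}-q+\cdots$ we get $\phi=-4q+O(q^2)$, so $\ord_q\phi=1$ with unit leading coefficient $-4$, and $\mathbb Z[1/2][\delta,\e]=\mathbb Z[1/2][\delta,\phi]$. For $2a+4b=k$ (necessarily $k$ even) the monomial $\delta^a\phi^b$ then has $q$‑order exactly $b$ with leading coefficient $(1/4)^a(-4)^b\in\mathbb Z[1/2]^\times$, and as $b$ runs over $0,1,\dots,\lfloor k/4\rfloor$ these $\lfloor k/4\rfloor+1$ monomials realize every $q$‑order in that range.

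From here the valence formula for $\Gamma_0(2)$, via $[\mathrm{PSL}_2(\mathbb Z):\bar\Gamma_0(2)]=3$, gives $\ord_q g\le k/4$ for every nonzero $g\in M_k(\Gamma_0(2))$, hence $\dim_{\mathbb C}M_k(\Gamma_0(2))\le\lfloor k/4\rfloor+1$; so the $\lfloor k/4\rfloor+1$ monomials $\delta^a\phi^b$, having distinct $q$‑orders, form a $\mathbb C$‑basis of $M_k(\Gamma_0(2);\mathbb C)$, which proves $M_*(\Gamma_0(2);\mathbb C)=\mathbb C[\delta,\phi]=\mathbb C[\delta,\e]$ (a polynomial ring, so $\delta$ and $\e$ are algebraically independent). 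Finally, given $f\in M_k(\Gamma_0(2))$ with Fourier expansion in $\mathbb Z[1/2][[q]]$, write $f=\sum_{2a+4b=k}c_{a,b}\,\delta^a\phi^b$ with $c_{a,b}\in\mathbb C$; if $b_0$ is the smallest $b$ with $c_{a,b}\ne0$, comparing $q^{b_0}$‑coefficients and using the unit leading coefficient of $\delta^a\phi^{b_0}$ forces $c_{a,b_0}\in\mathbb Z[1/2]$, after which $f-c_{a,b_0}\delta^a\phi^{b_0}$ still has coefficients in $\mathbb Z[1/2]$ but strictly larger $q$‑order; iterating, and using $\ord_q(\cdot)\le\lfloor k/4\rfloor$ to force termination, shows every $c_{a,b}\in\mathbb Z[1/2]$, so $f\in\mathbb Z[1/2][\delta,\phi]=\mathbb Z[1/2][\delta,\e]$. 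The main obstacles are the two integrality verifications in the first step and, above all, the recognition that one must trade $\e$ for a generator of positive $q$‑order before any elimination can even begin; the rest is routine $\Gamma_0(2)$ bookkeeping.
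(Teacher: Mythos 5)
Your proof is correct, but it is worth noting that the paper does not actually prove this proposition: it simply records the statement as well known and points to the literature (Appendix~I of the Hirzebruch--Berger--Jung reference), where $\delta$ and $\e$ are produced from the theory of the level~2 elliptic genus. What you supply is therefore a genuinely self-contained alternative. Your constructions check out: $2E_2(2z)-E_2(z)$ is the standard holomorphic weight-$2$ form on $\Gamma_0(2)$, its $n$-th coefficient is $24\sum_{d\mid n,\,d\text{ odd}}d$, and $\tfrac1{15}E_4(2z)-\tfrac1{240}E_4(z)$ has $n$-th coefficient $16\sigma_3(n/2)-\sigma_3(n)$; both match the displayed expansions, and since $\dim M_2(\Gamma_0(2))=1$ and $\dim M_4(\Gamma_0(2))=2$ the expansions determine the forms. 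Your key device --- trading $\e$ for $\phi=\e-\delta^2=-4q+O(q^2)$ so that the monomials $\delta^a\phi^b$ in weight $k$ have distinct $q$-orders $0,\dots,\lfloor k/4\rfloor$ with unit leading coefficients in $\mathbb Z[1/2]$, combined with the valence bound $\ord_q g\le k/4$ from the index-$3$ inclusion $\bar\Gamma_0(2)\subset\mathrm{PSL}_2(\mathbb Z)$ --- is exactly the right mechanism both for the $\mathbb C$-basis claim and for the integrality descent. It is also pleasantly consistent with the paper's later notation: your $\phi$ is $-\mu$ where $\mu=\delta^2-\e$, and the unit leading coefficient of $\mu=4q+\cdots$ is what makes identities such as $\D=64\mu\e^2$ work. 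What your approach buys is an elementary, citation-free proof using only Eisenstein series and the valence formula; what the cited source buys is the topological/genus-theoretic origin of $\delta$ and $\e$, which is not needed for the computations in this paper.
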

Given an integer $N\geq1$,
we define two linear operators
on modular forms:
the first is
\begin{align*}
\iota_N:M_*&\to M_*(N)\\
f&\mapsto f
\end{align*}
obtained by regarding $f\in M_*$
as a modular form for $\Gamma_0(N)$;
the second is the {\em Verschiebung}
\begin{align*}
V_N:M_*&\to M_*(N)\\
f(q)&\mapsto f(q^N)
\end{align*}
which satisfies $V_N(fg)=V_N(f)V_N(g)$
since $q\mapsto q^N$ is a ring
endomorphism of $\mathbb Z[[q]]$.
In particular, we will often write
$V_Nf^m$ for $1\leq m\in\mathbb Z$, which is unambiguous since
$V_N(f^m)=(V_Nf)^m$.
The linear operator
appearing in condition (C4) of Behrens'
theorem is
\[
L_N=V_N-\iota_N.
\]
We will not distinguish between
$f$ and $\iota_Nf$ and so
we will write $L_Nf=V_Nf-f$.  

It will be
convenient to define
\[
\mu=\delta^2-\e\in M_4(2)[1/2]
\]
and direct computations
in $M_*(2)[1/2]$
yield the identities
\begin{align}
E_4&=64\mu+16\e
\label{E4identity},\\
V_2E_4&=4\mu+16\e
\label{vE4identity},\\
\D&=64\mu\e^2\label{deltaidentity},\\
V_2\D&=\mu^2\e\label{vdeltaidentity}.
\end{align}

Let $M_t(N)_{\mathbb Z/p}$ be
the weight $t$ modular forms
for $\Gamma_0(N)$ defined over
$\mathbb Z/p$.  As long as
$N$ is invertible in $\mathbb Z/p$,
\[
M_t(N)_{\mathbb Z/p}=M_t(N)\otimes
\mathbb Z/p
\]
(see \cite[Equation~(1.1)]{Beh:Cong}).
We will not draw a notational distinction
between $f\in M_t(N)$ and its mod $p$
reduction $f\in M_t(N)_{\mathbb Z/p}$
as the meaning will always be clear from
the context.    

The following result of Serre
is a generalization of
the well known congruence
$E_{p-1}(q)\equiv 1\mod p$, and
dictates exactly when congruences
between modular forms
of different weights modulo
$p$ can occur (see \cite[Theorem~10.2]{Beh:Cong}
or \cite[Corollary~4.4.2]{Katz:padic}).
\begin{prop}[Serre]\label{Serre}
Let $f_1\in M_{t_1}(N)_{\mathbb Z/p}$
and
$f_2\in M_{t_2}(N)_{\mathbb Z/p}$
with $t_1<t_2$.  Then
\[
f_1(q)=f_2(q)\in\mathbb Z/p[[q]]
\]
(that is, $f_1\equiv f_2\mod p$)
if and only if
$t_1\equiv t_2\mod (p-1)$
and $f_2=E_{p-1}^{\frac{t_2-t_1}
	{p-1}}f_1$.
\end{prop}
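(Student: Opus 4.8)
The plan is to prove the two implications separately. The ``if'' direction is elementary: I would first observe that $E_{p-1}(q)\equiv1\bmod p$ for $p\geq5$, which follows from the Clausen--von Staudt theorem, since $B_{p-1}$ has $p$-adic valuation $-1$ and hence the ($p$-integral) coefficient $\frac{2(p-1)}{B_{p-1}}$ appearing in the $q$-expansion of $E_{p-1}$ is divisible by $p$. Given this, applying the $q$-expansion ring homomorphism $M_{t_2}(N)_{\mathbb Z/p}\to\mathbb Z/p[[q]]$ to the identity $f_2=E_{p-1}^{(t_2-t_1)/(p-1)}f_1$ yields $f_2(q)=E_{p-1}(q)^{(t_2-t_1)/(p-1)}f_1(q)=f_1(q)$, which is what is claimed.

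For the converse I would assume $f_1(q)=f_2(q)$ is a \emph{nonzero} element of $\mathbb Z/p[[q]]$ and take as the key structural input the theorem that, for $N$ invertible mod $p$, the kernel of the $q$-expansion homomorphism from the graded ring $\bigoplus_t M_t(N)_{\mathbb Z/p}$ to $\mathbb Z/p[[q]]$ is the ideal generated by $E_{p-1}-1$. Writing $f_1-f_2=(E_{p-1}-1)h$ and decomposing $h=\sum_j h_j$ into its homogeneous components $h_j\in M_j(N)_{\mathbb Z/p}$, so that $E_{p-1}h_j\in M_{j+(p-1)}(N)_{\mathbb Z/p}$, I would compare weight-$w$ components in $f_1-f_2=\sum_j(E_{p-1}h_j-h_j)$. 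Since the left-hand side is supported only in weights $t_1$ and $t_2$, this gives
\[
E_{p-1}h_{w-(p-1)}-h_w=\begin{cases}f_1,&w=t_1,\\-f_2,&w=t_2,\\0,&\text{otherwise}.\end{cases}
\]
The point now is that $E_{p-1}$ is a nonzerodivisor: if $E_{p-1}g=0$ then $g(q)=E_{p-1}(q)g(q)=0$ in $\mathbb Z/p[[q]]$, whence $g=0$ by the injectivity of $q$-expansion in a fixed weight (which follows from Proposition~\ref{basis} when $N=1$ and from Katz's $q$-expansion principle in general). Unwinding the displayed recursion from the extreme weights in the finite support of $h$ then forces $h_{t_1}=-f_1$, forces $\max\{j:h_j\neq0\}=t_2-(p-1)$, propagates to give $h_{t_1+j(p-1)}=-E_{p-1}^{j}f_1$ for the relevant $j$, rules out any ``overshoot'' past $t_2$, and finally produces $t_2-t_1=k(p-1)$ together with $f_2=E_{p-1}^{k}f_1$, exactly the asserted conclusion.

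The substantive step, which I expect to be the main obstacle, is the structural input invoked above: that $E_{p-1}-1$ generates the \emph{entire} kernel of the $q$-expansion map on $\bigoplus_t M_t(N)_{\mathbb Z/p}$, and not merely lies in it. For level $1$ this is Swinnerton-Dyer's theorem; for general $N$ it is due to Katz \cite[Corollary~4.4.2]{Katz:padic} (see also \cite[Theorem~10.2]{Beh:Cong}), and its proof combines the $q$-expansion principle with the geometric fact that the Hasse invariant $E_{p-1}$ cuts out precisely the finite supersingular locus of the mod $p$ modular curve, so that inverting it realizes the affine ordinary locus. Rather than reprove this, I would cite it, so that the only new work is the homogeneous-component bookkeeping of the previous paragraph.
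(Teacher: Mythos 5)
Your proposal is essentially correct, but it is worth noting that the paper offers no proof of this proposition at all: it is stated as a known result of Serre and justified purely by citation to \cite[Theorem~10.2]{Beh:Cong} and \cite[Corollary~4.4.2]{Katz:padic}. What you have done is derive the two-weight statement from the ring-theoretic formulation of the same circle of results, namely that the kernel of the $q$-expansion homomorphism on $\bigoplus_t M_t(N)_{\mathbb Z/p}$ is the principal ideal $(E_{p-1}-1)$ (Swinnerton-Dyer in level $1$, Katz in general). Your ``if'' direction via Clausen--von Staudt is standard and fine, and your homogeneous-component bookkeeping in the converse is sound: writing $f_1-f_2=(E_{p-1}-1)h$, the nonzerodivisor property of $E_{p-1}$ pins the minimal and maximal weights in the support of $h$ to $t_1$ and $t_2-(p-1)$ respectively, and the recursion then forces both the congruence $t_1\equiv t_2\bmod(p-1)$ and the identity $f_2=E_{p-1}^{(t_2-t_1)/(p-1)}f_1$. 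Two small caveats: first, you correctly restrict to the case $f_1(q)=f_2(q)\neq0$, which is the intended reading (if the common expansion is zero, the $q$-expansion principle gives $f_1=f_2=0$ and the weight congruence need not hold, so the proposition as literally stated has a degenerate exception); second, since your key structural input is essentially the same theorem the paper cites for the proposition itself, your argument does not reduce the logical dependence on the literature --- it repackages it --- but it is a legitimate and correctly executed deduction, and arguably more informative than the bare citation the paper gives.
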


Finally,
the following rigidity result
implies that checking condition
(C4) for a given $f_{i/j}$
often reduces to a computation
with $\Gamma_0(\ell)$ modular forms 
at a {\em single} prime $\ell\neq p$
(see \cite[Theorem~1.5]{Beh:Cong}).
\begin{prop}[Behrens]\label{rigidity}
	If the prime $\ell_0$ is a topological
	generator of $\mathbb Z_p^{\times}$
	and $f$ is a modular form
	of weight $t\equiv (p-1)$
	satisfying conditions (C1) through (C3),
	as well as condition (C4) for
	$\ell=\ell_0$, then $f$ satisfies
	condition (C4) for all primes
	$\ell\neq p$.
\end{prop}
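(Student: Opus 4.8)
The plan is to reduce the proposition, via Serre's congruence criterion, to a single divisibility statement among mod $p$ modular forms, and then to recognize that statement as living in a module over the Iwasawa algebra $\mathbb F_p[[\mathbb Z_p^\times]]$, where the hypothesis on $\ell_0$ forces the conclusion. \emph{Step 1} is the reduction. Write $t=(p^2-1)i$ for the weight of $f$ and $w=t-(p-1)j$, so $L_\ell f=V_\ell f-\iota_\ell f$ lies in $M_t(\ell)$ and $t\equiv w\pmod{p-1}$. If $L_\ell f\equiv 0\bmod p$ then condition (C4) at $\ell$ is automatic; otherwise Proposition~\ref{Serre}, applied with $f_1$ of weight $w$ and $f_2=L_\ell f$ of weight $t$, shows that (C4) at $\ell$ holds if and only if $L_\ell f$ is divisible by $E_{p-1}^{\,(t-w)/(p-1)}=E_{p-1}^{\,j}$ in $M_*(\ell)_{\mathbb Z/p}$ (with the zero form divisible by every power of $E_{p-1}$, consistently, since $E_{p-1}$ is a nonzerodivisor there). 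So it suffices to show that $E_{p-1}^{\,j}\mid L_{\ell_0}f$ in $M_*(\ell_0)_{\mathbb Z/p}$ implies $E_{p-1}^{\,j}\mid L_{\ell}f$ in $M_*(\ell)_{\mathbb Z/p}$ for every prime $\ell\neq p$.

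\emph{Step 2} is to package this divisibility geometrically. Modulo $p$, $E_{p-1}$ is the Hasse invariant, so $E_{p-1}^{\,j}\mid L_\ell f$ asserts that $V_\ell f-\iota_\ell f$ vanishes to order $\ge j$ along the supersingular locus of $X_0(\ell)_{\mathbb F_p}$. Because $\ell$ is prime to $p$, an $\ell$-isogeny induces an isomorphism on formal groups, so near the supersingular locus the operators $\iota_\ell$ and $V_\ell$ are governed by the action of an element of reduced norm $\ell$ in the unit group of the maximal order of the quaternion division algebra over $\mathbb Q_p$ (the Morava stabilizer group) on the Lubin--Tate deformation space of the height $2$ formal group of a supersingular curve. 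The step I would carry out in detail is to organize the mod $p$ reductions of weight $t$ modular forms, restricted to a formal neighborhood of the supersingular locus, into a module $M$ over $\mathbb F_p[[\mathbb Z_p^\times]]$ --- the reduction of $f$ mod $p$, restricted to such a neighborhood, giving an element $\bar f\in M$ --- on which $\ell\in\mathbb Z_p^\times$ acts by an operator $\sigma_\ell$ realizing $V_\ell$ relative to $\iota_\ell$, the reduced-norm-one part of the quaternionic action dropping out after descent so that only $\ell\in\mathbb Z_p^\times$ survives; by Step~1, condition (C4) at $\ell$ should then become the statement that $(\sigma_\ell-1)\bar f$ lies in the $\mathbb F_p[[\mathbb Z_p^\times]]$-submodule $M'\subseteq M$ of sections vanishing to order $\ge j$ along the supersingular locus. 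That $\sigma_{\ell\ell'}=\sigma_\ell\sigma_{\ell'}$ is already visible on modular forms, via $L_{\ell\ell'}f=V_\ell(L_{\ell'}f)+L_\ell f$ together with the fact that $V_\ell$ preserves divisibility by $E_{p-1}$ modulo $p$ (the Hasse invariant being the unique mod $p$ form of weight $p-1$ with $q$-expansion $1$, one has $V_\ell E_{p-1}=E_{p-1}$).

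\emph{Step 3} is the conclusion, which is then formal. Since $\ell_0$ topologically generates the procyclic group $\mathbb Z_p^\times$, the element $\sigma_{\ell_0}-1$ generates the augmentation ideal $I\subset\mathbb F_p[[\mathbb Z_p^\times]]$: under an isomorphism $\mathbb F_p[[\mathbb Z_p^\times]]\cong\prod_{a\in\mathbb F_p^\times}\mathbb F_p[[T]]$ adapted to a topological generator, $\sigma_{\ell_0}-1$ is a uniformizer in the trivial-character factor and a unit in each other factor, and that ideal is precisely $I$. For any prime $\ell\neq p$, the element $\ell\in\mathbb Z_p^\times$ satisfies $\sigma_\ell-1\in I=(\sigma_{\ell_0}-1)$, say $\sigma_\ell-1=h\,(\sigma_{\ell_0}-1)$; hence if $f$ satisfies (C4) at $\ell_0$, i.e.\ $(\sigma_{\ell_0}-1)\bar f\in M'$, then $(\sigma_\ell-1)\bar f=h\,(\sigma_{\ell_0}-1)\bar f\in M'$ because $M'$ is a submodule, so $f$ satisfies (C4) at $\ell$. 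The main obstacle is therefore entirely in Step~2: showing that the $\ell$-isogeny action on $\bar f$ really depends on $\ell$ only through its reduced norm $\ell\in\mathbb Z_p^\times$, that $M'$ is $\mathbb F_p[[\mathbb Z_p^\times]]$-stable, and that the correspondence between mod $p$ modular forms near the supersingular locus and the module $M$ is uniform in $\ell$ and compatible with the stacky automorphisms at the supersingular points (in particular at $j=0$ when $p\equiv 2\bmod 3$ and at $j=1728$ when $p\equiv 3\bmod 4$). This is exactly the structure that Behrens extracts from the $K(2)$-local analysis of spectra assembled from $\mathrm{TMF}$ and $\mathrm{TMF}_0(\ell)$ in \cite{Beh:Mod}; the proposition as stated is \cite[Theorem~1.5]{Beh:Cong}, which we take as given in the sequel.
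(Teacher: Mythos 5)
The paper does not actually prove this proposition: it is imported verbatim from Behrens as \cite[Theorem~1.5]{Beh:Cong} and used as a black box, its only downstream role being Lemma \ref{rigidlem}, which combines it with Proposition \ref{Serre} in exactly the way your Step~1 does. Your proposal ends by conceding the same thing --- that the substance lives in Behrens' $K(2)$-local analysis and that you take \cite[Theorem~1.5]{Beh:Cong} as given --- so at the level of what is actually established, you and the paper are in the same place: both defer to the citation. Your Step~1 is correct and is literally the content of Lemma \ref{rigidlem} (the equivalence of (C4) at $\ell$ with $E_{p-1}^{j}\mid L_\ell f$ in $M_*(\ell)_{\mathbb Z/p}$), and your remark that $V_\ell E_{p-1}\equiv E_{p-1}\bmod p$ is consistent with the paper's identity \eqref{vE4identity} at $p=5$, $\ell=2$.

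If, however, the sketch were offered as a proof, it has a genuine gap, and you have located it accurately yourself: all of the difficulty is in Step~2, and none of it is discharged. Nothing in the proposal shows that the operator $\sigma_\ell$ on sections near the supersingular locus depends on $\ell$ only through its image in $\mathbb Z_p^\times$ (equivalently, that the reduced-norm-one part of the quaternionic action acts trivially on the relevant quotient after descent), nor that the submodule $M'$ of sections vanishing to order at least $j$ is $\mathbb F_p[[\mathbb Z_p^\times]]$-stable. A further point to be careful about: the relation $L_{\ell\ell'}f=V_\ell(L_{\ell'}f)+L_\ell f$ only gives a cocycle over the multiplicative monoid of integers prime to $p$; to invoke the hypothesis that $\ell_0$ topologically generates $\mathbb Z_p^\times$ you need the action to extend $p$-adically continuously to all of $\mathbb Z_p^\times$, and that continuity is precisely what the Lubin--Tate picture from \cite{Beh:Mod} supplies. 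Without these inputs, Step~3, while formally correct, has nothing to act on. So the honest summary is: your reduction matches the paper's use of the proposition, your heuristic matches the shape of Behrens' actual argument, but neither you nor the paper proves the statement, and your write-up should present Step~2 as a citation rather than as an outline of work you could complete.
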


\section{Computations at the prime 5}
\label{at5}
In this section we fix $p=5$
and prove Theorem \ref{main_5}.
The first goal is to work toward a proof
that the modular forms identified as
$f_{i/j}$ in Theorem \ref{main_5} satisfy
conditions (C1), (C2), and (C3) of Behrens'
theorem.
\begin{prop}\label{setup}
Suppose $f\in M_t$ with $t\equiv0\mod 12$.
\begin{enumerate}[(a)]
\item There exist integers $c_0,\ldots,c_{t/12}$
such that
\[
f=c_0\D^{t/12}+c_1\D^{(t/12)-1}E_4^3+\cdots+	c_{(t/12)-1}\D E_4^{(t/4)-3}+c_{t/12}E_4^{t/4}.
\]
\item If $c_0\neq0\mod 5$ then $f(q)\not\equiv0\mod 5$
and $f$ is not
divisible by $E_4$ in $(M_*)_{\mathbb Z/5}$.
\item If $c_m\neq0$ and
$c_{m+1},c_{m+2},\ldots,c_{t/12}=0$
for some $m$, $0\leq m\leq t/12$, then
\[	
\ord_qf=\dfrac t{12}-m.
\]
\end{enumerate}
\end{prop}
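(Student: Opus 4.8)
The plan is to reduce all three parts to the ``triangular'' shape of the monomial basis of Proposition \ref{basis}. For (a), write $t=12N$; the constraint $12a+4b=t$ on a basis monomial $\D^aE_4^b$ forces $b=3(N-a)$, so as $a$ runs over $0,1,\dots,N$ these monomials are exactly $\D^{N},\ \D^{N-1}E_4^{3},\ \dots,\ \D E_4^{3N-3},\ E_4^{3N}$, i.e.\ the $\D^{(t/12)-k}E_4^{3k}$ for $0\le k\le t/12$, the last being $E_4^{t/4}$. Expanding $f$ in this $\mathbb Z$-basis gives (a), with $c_k$ the unique coordinate of $f$ along $\D^{(t/12)-k}E_4^{3k}$. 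At this point I would record the one computational fact that drives everything else: because $\D(q)=q+\cdots$ and $E_4(q)=1+\cdots$, the monomial $\D^{(t/12)-k}E_4^{3k}$ has $q$-expansion $q^{(t/12)-k}(1+O(q))$, so its $q$-order is exactly $(t/12)-k$, these orders are pairwise distinct, and its leading Fourier coefficient is $1$.

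Part (c) is then immediate. If $c_m\ne 0$ and $c_{m+1}=\dots=c_{t/12}=0$, then in $f=\sum_{k=0}^{m}c_k\D^{(t/12)-k}E_4^{3k}$ the summand with $k=m$ has strictly smallest $q$-order among the nonzero summands, namely $(t/12)-m$; hence every Fourier coefficient of $f$ below that order vanishes, while the coefficient of $q^{(t/12)-m}$ equals $c_m\ne 0$ with no cancellation, so $\ord_q f=(t/12)-m$. The first assertion of (b) is the same argument run modulo $5$: let $m^{\ast}$ be the largest index with $c_{m^{\ast}}\not\equiv 0\bmod 5$, which exists since $c_0\not\equiv 0$; the summands with $k>m^{\ast}$ lie in lower $q$-order but are $\equiv 0\bmod 5$, the summand with $k=m^{\ast}$ contributes $c_{m^{\ast}}$ to the coefficient of $q^{(t/12)-m^{\ast}}$, and the summands with $k<m^{\ast}$ do not reach that order, so that Fourier coefficient of $f$ is $\not\equiv 0\bmod 5$ and hence $f(q)\not\equiv 0\bmod 5$.

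For the remaining assertion of (b), recall $(M_\ast)_{\mathbb Z/5}=M_\ast\otimes\mathbb Z/5$, so the reductions of the Proposition \ref{basis} monomials are a $\mathbb Z/5$-basis in each weight. Suppose $f\equiv E_4 g$ in $(M_\ast)_{\mathbb Z/5}$; comparing weights we may take $g\in(M_{t-4})_{\mathbb Z/5}$, and since $t-4\equiv 0\bmod 4$ we expand $g$ in its monomial basis. Every monomial of $E_4g$ is then divisible by $E_4$, so has positive $E_4$-exponent and is not $\D^{t/12}$; by uniqueness of expansion in the weight-$t$ basis the $\D^{t/12}$-coordinate of $E_4g$ is $0$, whereas that of $f$ is $c_0\not\equiv 0\bmod 5$, a contradiction. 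I do not expect a genuinely hard step here; the only thing requiring care is this last point, namely that ``divisible by $E_4$'' is a statement in the graded ring $(M_\ast)_{\mathbb Z/5}$ (so $E_4$, although congruent to $1$ as a $q$-series, is a non-unit of positive weight) and that it is the $\D^{t/12}$-coordinate, not a Fourier coefficient, that detects the obstruction; the rest is bookkeeping with the monomial basis.
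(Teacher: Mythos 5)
Your proposal is correct and follows essentially the same route as the paper: part (a) is the specialization of Proposition \ref{basis} to weight $t\equiv 0\bmod 12$, parts (b) and (c) rest on the facts that this monomial basis survives reduction mod $5$ and that $\D^{(t/12)-k}E_4^{3k}$ has $q$-order exactly $(t/12)-k$ with leading coefficient $1$. You simply spell out the details (distinctness of the $q$-orders, the $\D^{t/12}$-coordinate obstruction to $E_4$-divisibility) that the paper leaves implicit.
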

\begin{proof}
Part (a) is a special case of Proposition
\ref{basis}.  Part (b) follows from the fact
that the integral basis for $M_t$ given by
Proposition \ref{basis} remains a basis
for the vector space $(M_t)_{\mathbb Z/5}$
(see, e.g., Section 1 of \cite{Jochnowitz}).
Since $\D(q)=q+\cdots$ and $E_4(q)=1+\cdots$,
the value of $\ord_q(f)$ is the smallest power of
$\D$ appearing in the expansion of $f$ given
in Part (a), and Part (c) follows.
\end{proof}
\begin{cor}\label{C1C3}
The modular forms identified as $f_{i/j}$
in Theorem \ref{main_5}
satisfy conditions (C1) and (C3) of Behrens' theorem.
\end{cor}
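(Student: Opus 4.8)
The plan is to verify conditions (C1) and (C3) directly from the structural form of the modular forms listed in Theorem \ref{main_5}, using Proposition \ref{setup}. The key observation is that every modular form $f_{i/j}$ appearing in Theorem \ref{main_5} is, when expanded in the basis of Proposition \ref{setup}(a), of the form $\D^{t/12}$ plus a $\mathbb Z$-linear combination of terms $\D^a E_4^b$ with $a>0$; that is, the coefficient $c_0$ of the pure power $\D^{t/12}$ equals $1$, where $t=24i$. This is immediate when $f_{i/j}=\D^{2r\cdot5^n}$, and in the remaining cases one checks that each of the correction terms $C_{m,n,r}$ and $D_{m,n,r}$ is of the form $(\text{integer})\cdot\D^a E_4^b$ with $a\geq1$ and $b\geq1$; indeed the explicit exponents given in Section \ref{Intro} show the $\D$-exponent in each correction term is strictly positive (for $C_{0,n,r}$ it is $42\cdot5^{n-2}+2(r-1)5^n\geq1$, and similarly for the others), so none of them contributes to the coefficient of $\D^{t/12}$ or of $E_4^{t/4}$.

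First I would record, as the content of the proof, that $c_0=1$ for each $f_{i/j}$ in the theorem. Since $1\not\equiv0\bmod5$, Proposition \ref{setup}(b) applies and gives immediately that $f_{i/j}(q)\not\equiv0\bmod5$, which is exactly condition (C1). For condition (C3), suppose toward a contradiction that there were $g\in M_t$ with $t<(p^2-1)i=24i$ and $f_{i/j}(q)\equiv g(q)\bmod5$. Since the weight of $f_{i/j}$ is $24i$ and $24i\equiv0\bmod(p-1)=4$, and since (C3) only concerns weights $t<24i$, any such $g$ would have to satisfy $t\equiv24i\equiv0\bmod4$ by Serre's criterion (Proposition \ref{Serre}); then Proposition \ref{Serre} forces $f_{i/j}=E_{4}^{(24i-t)/4}g$ in $(M_{24i})_{\mathbb Z/5}$, so $f_{i/j}$ would be divisible by $E_4$ mod $5$. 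But Proposition \ref{setup}(b), again using $c_0=1\not\equiv0$, says $f_{i/j}$ is \emph{not} divisible by $E_4$ in $(M_*)_{\mathbb Z/5}$ — a contradiction. Hence no such $g$ exists and (C3) holds.

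I do not expect any serious obstacle here; the entire argument is a bookkeeping check that the leading coefficient $c_0$ is $1$, together with two direct applications of Proposition \ref{setup}(b) and one application of Proposition \ref{Serre}. The only point requiring a little care is the case analysis: one must confirm that in every branch of Theorem \ref{main_5} — the pure-power case, and each of the two cases in the displayed formula for $f_{i/j}$ when $r>1$ and $5^n+1\leq j\leq a_n$ — the coefficient of $\D^{t/12}$ is $1$ and the coefficient of $E_4^{t/4}$ is $0$. Both follow at once from the stated exponents: the $\D^{t/12}$ term is written explicitly and separately as the summand $\D^{2r\cdot5^n}$, and every $C_{m,n,r}$, $D_{m,n,r}$ carries a positive power of $\D$ and a positive power of $E_4$, so it is neither $\D^{t/12}$ nor $E_4^{t/4}$. (That the weights all match up to $24i=24r\cdot5^n$ is guaranteed by the membership statements $C_{m,n,r},D_{m,n,r}\in M_{24r\cdot5^n}$ recorded in Section \ref{Intro}.) This completes the verification of (C1) and (C3).
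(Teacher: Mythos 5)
Your proof is correct and follows essentially the same route as the paper: observe that in the basis expansion of Proposition \ref{setup}(a) every $f_{i/j}$ from Theorem \ref{main_5} has leading coefficient $c_0=1$ (since each correction term carries a strictly positive power of $E_4$), then deduce (C1) from Proposition \ref{setup}(b) and (C3) from Proposition \ref{setup}(b) combined with Serre's criterion (Proposition \ref{Serre}). The paper's proof is just a terser version of your argument, leaving the $E_4$-divisibility contradiction for (C3) implicit where you spell it out.
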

\begin{proof}
The modular form $f_{i/j}$ has weight
$24i$ and so has an expansion
as in Proposition \ref{setup}(a).
The formulas given in
Theorem \ref{main_5} show that each such expansion
has leading coefficient $c_0=1$ by construction.
Thus $f_{i/j}(q)\neq0\mod 5$ by Proposition \ref{setup}(b),
and so $f_{i/j}$ satisfies condition (C1).
Condition (C3) is also satisfied by
Propositions \ref{setup}(b) and \ref{Serre}.
\end{proof}
\begin{lem}\label{qineq}
At the prime 5, condition (C2) is equivalent
to $\ord_qf_{i/j}>2i-\dfrac j3$.
\end{lem}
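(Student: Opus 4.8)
The plan is to substitute $p=5$ into condition (C2) and show that the second of its two alternatives can never occur, so that (C2) collapses to the single inequality in the statement. Explicitly, at $p=5$ condition (C2) asserts that the integer $12\cdot\ord_q f_{i/j}$ is either greater than $(5^2-1)i-(5-1)j=24i-4j$, or equal to $24i-4j-2$. Dividing the first clause by $12$ gives exactly $\ord_q f_{i/j}>2i-j/3$, which is the claimed condition, so it remains only to rule out the equality clause.

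First I would note that $\ord_q f_{i/j}$ is a bona fide non-negative integer: by Corollary \ref{C1C3} the modular form $f_{i/j}\in M_{24i}$ has Fourier expansion in $\mathbb Z[[q]]$ and is not congruent to $0$ mod $5$, hence is nonzero, so $12\cdot\ord_q f_{i/j}$ is an integer multiple of $12$. On the other hand $24i-4j-2\equiv 2\pmod 4$, so it is not divisible by $4$ and therefore not by $12$. Hence the equation $12\cdot\ord_q f_{i/j}=24i-4j-2$ has no solution, and condition (C2) at $p=5$ is equivalent to $12\cdot\ord_q f_{i/j}>24i-4j$, i.e.\ to $\ord_q f_{i/j}>2i-j/3$.

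I do not expect any real obstacle here: the entire content is the divisibility observation that $24i-4j-2$ is never a multiple of $12$ (indeed never a multiple of $4$), which makes the ``equal to'' branch of (C2) vacuous at the prime $5$. The only point requiring even a moment's care is the triviality that $\ord_q f_{i/j}$ is an integer, which is guaranteed by $f_{i/j}$ being a nonzero modular form with integral Fourier expansion.
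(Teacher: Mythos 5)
Your proof is correct and follows essentially the same route as the paper: substitute $p=5$ and observe that the equality branch of (C2) is vacuous because $24i-4j-2$ is never divisible by $12$. The paper states this divisibility fact without the mod-$4$ justification you supply, but the argument is identical in substance.
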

\begin{proof}
At the prime 5, condition (C2) says that either
$\ord_qf_{i/j}>(24i-4j)/12$ or $\ord_qf_{i/j}=(24i-4j-2)/12$.
But $24i-4j-2$ is never divisible by 12 for integer values
of $i$ and $j$, so condition (C2) reduces to the inequality
only.
\end{proof}
\begin{prop}\label{C2ONLY}
The modular forms $f_{i/j}$ as identified
in Theorem 1.1 all satisfy condition (C2) of Behrens' theorem.
\end{prop}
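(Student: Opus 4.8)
The plan is to verify, in each case of Theorem \ref{main_5}, that the $q$-order of the stated modular form $f_{i/j}$ exceeds $2i-j/3$, which by Lemma \ref{qineq} is exactly condition (C2). By Proposition \ref{setup}(c), computing $\ord_q f_{i/j}$ amounts to identifying the smallest power of $\D$ appearing in the basis expansion of $f_{i/j}$; since every summand is a monomial in $\D$ and $E_4$, one only needs to compare the $\D$-exponents of the terms actually present. First I would dispose of the generic case: when $f_{i/j}=\D^{2r\cdot5^n}$, we have $\ord_q f_{i/j}=2i$, and since $j\geq1$ forces $2i>2i-j/3$, condition (C2) holds trivially. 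So the real content is the case $r>1$ and $5^n+1\leq j\leq a_n$, where $f_{i/j}$ is $\D^{2i}$ plus a sum of the correction terms $C_{m,n,r}+D_{m,n,r}$ (possibly with the last $D$-term omitted).

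Next I would read off the $\D$-exponents from the definitions of $C_{m,n,r}$ and $D_{m,n,r}$ given just before the theorem. For $m\geq1$, the term $D_{m,n,r}$ has $\D$-exponent $8\cdot5^{n-1}+5^{n-m-2}+2(r-1)5^n$, and $C_{m,n,r}$ has the larger exponent $8\cdot5^{n-1}+2\cdot5^{n-m-2}+2(r-1)5^n$; for $m=0$ the relevant exponents are $41\cdot5^{n-2}+2(r-1)5^n$ (for $D_{0,n,r}$) and $42\cdot5^{n-2}+2(r-1)5^n$ (for $C_{0,n,r}$). In the first (larger-$j$) case of the theorem the smallest $\D$-exponent among all summands is that of $D_{u-1,n,r}$, so
\[
\ord_q f_{i/j}=8\cdot5^{n-1}+5^{n-u-1}+2(r-1)5^n,
\]
while in the second case, where $D_{u-1,n,r}$ is dropped and $C_{u-1,n,r}$ is the last surviving term, the minimum $\D$-exponent is that of $C_{u-1,n,r}$, giving
\[
\ord_q f_{i/j}=8\cdot5^{n-1}+2\cdot5^{n-u-1}+2(r-1)5^n
\]
(one must double-check that for $u\geq2$ these really are smaller than the exponents of $C_{0,n,r},D_{0,n,r}$ and of the intermediate terms, which is a short monotonicity check in $m$, and that the $u=1$ subcase with $m=0$ is handled by the special formulas). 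Writing $i=r\cdot5^n$, the inequality to be proved in the first case becomes
\[
8\cdot5^{n-1}+5^{n-u-1}+2(r-1)5^n>2r\cdot5^n-\frac j3,
\]
i.e.\ $j>3\bigl(2\cdot5^n-8\cdot5^{n-1}-5^{n-u-1}\bigr)=3\bigl(2\cdot5^{n-1}-5^{n-u-1}\bigr)=6\cdot5^{n-1}-3\cdot5^{n-u-1}=5^n+5^{n-1}-3\cdot5^{n-u-1}$, and similarly in the second case one needs $j>5^n+5^{n-1}-6\cdot5^{n-u-1}$.

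Finally I would close the argument by comparing these two inequalities against the hypotheses that accompany each case in the theorem. In the first case the standing assumption is $j>5^n+5^{n-1}-5^{n-u}+2\cdot5^{n-u-1}=5^n+5^{n-1}-3\cdot5^{n-u-1}$, which is precisely the inequality derived above, so (C2) holds with equality threshold met exactly; in the second case one has the window bound $j\geq5^n+5^{n-1}-5^{n-u}+1=5^n+5^{n-1}-5\cdot5^{n-u-1}+1>5^n+5^{n-1}-6\cdot5^{n-u-1}$, again matching what is needed. I expect the main obstacle to be purely bookkeeping: keeping the three regimes ($f_{i/j}$ a pure power of $\D$; the $u=1$ case governed by $C_{0,n,r},D_{0,n,r}$; and $u\geq2$ governed by the $C_{m,n,r},D_{m,n,r}$ with $m\geq1$) straight, and verifying that within each surviving sum the $\D$-exponents are strictly decreasing in $m$ so that Proposition \ref{setup}(c) genuinely applies to the last term. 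Once that monotonicity is confirmed the inequalities reduce, after substituting $i=r\cdot5^n$ and clearing the factor of $3$, to exactly the defining inequalities on $j$ in the statement of Theorem \ref{main_5}, completing the proof.
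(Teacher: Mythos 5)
Your proposal is correct and follows essentially the same route as the paper: reduce (C2) to the inequality $\ord_q f_{i/j}>2i-j/3$ via Lemma \ref{qineq}, read off the $q$-order as the smallest $\D$-exponent of the last surviving correction term via Proposition \ref{setup}(c), and check the resulting inequality against the extremal value of $j$ in each of the three regimes. The monotonicity of the $\D$-exponents in $m$ that you flag is indeed the only point the paper leaves implicit, and your verification of it is right.
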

\begin{proof}
Suppose $i=r\cdot 5^n$ with $(r,5)=1$.  Consider first the case $j\leq p^n$,
so that $f_{r\cdot5^n/j}=\D^{2r\cdot5^n}$.  By
Lemma \ref{qineq}, verifying condition (C2)
is equivalent to verifying the inequality
\begin{equation}\label{checkineq}
\ord_q\D^{2r\cdot5^n}>2r\cdot5^n-\dfrac j3.
\end{equation}
but $\ord_q\D^{2r\cdot5^n}=2r\cdot5^n$
by Proposition \ref{setup}(c),
so \eqref{checkineq} clearly holds for $j\geq1$.

Next, suppose $j>p^n$.  Let $u$ be the
positive integer between 1 and $n-1$ such that
\[
5^n+5^{n-1}-5^{n-u}+1\leq j\leq
5^n+5^{n-1}-5^{n-u-1}.
\]
Assume $j>5^n+5^{n-1}-5^{n-u}+2\cdot5^{n-u-1}$.
By the formulas given in Theorem \ref{main_5},
we must verify condition (C2) for
\[
f_{r\cdot5^n/j}=\D^{2r\cdot5^n}+\sum_{m=0}^{u-1}
(C_{m,n,r}+D_{m,n,r}),
\]
a form whose $q$-order is the power
of $\D$ occurring in $D_{u-1,n,r}$ by Proposition 
\ref{setup}(c).
Therefore, by Lemma \ref{qineq}, we must verify
the inequality
\begin{equation}\label{checkineq2}
8\cdot5^{n-1}+5^{n-u-1}+2(r-1)5^n>2r\cdot5^n-\dfrac j3.
\end{equation}
It suffices to verify \eqref{checkineq2} for the
smallest possible value of $j$, which in this case
is \[j=5^n+5^{n-1}-5^{n-u}+2\cdot5^{n-u-1}+1.\]
At this value of $j$, \eqref{checkineq2} becomes
\[
8\cdot5^{n-1}+5^{n-u-1}+2(r-1)5^n>2r\cdot5^n-
\dfrac{5^n+5^{n-1}-5^{n-u}+2\cdot5^{n-u-1}+1}3
\]
which is equivalent to $1/3>0$. So \eqref{checkineq2}
holds.

Continuing with the case $j>p^n$, we now assume
$j\leq5^n+5^{n-1}-5^{n-u}+2\cdot5^{n-u-1}$.
By the formulas given in Theorem \ref{main_5}, we must verify
condition (C2) for
\[
f_{r\cdot5^n/j}=\D^{2r\cdot5^n}+\sum_{m=0}^{u-2}
(C_{m,n,r}+D_{m,n,r})+C_{u-1,n,r}
\]
whose $q$-order is the power of $\D$ occurring
in $C_{u-1,n,r}$ by Proposition \ref{setup}(c).  Therefore,
by Lemma \ref{qineq},
the inequality we must verify is
\begin{equation}\label{checkineq3}
8\cdot5^{n-1}+2\cdot5^{n-u-1}+2(r-1)5^n>2r\cdot5^n-\dfrac j3.
\end{equation}
The smallest possible value of $j$ is now
$j=5^n+5^{n-1}-5^{n-u}+1$, so it suffices to verify
\[
8\cdot5^{n-1}+2\cdot5^{n-u-1}+2(r-1)5^n>2r\cdot5^n-
\dfrac{5^n+5^{n-1}-5^{n-u}+1}3
\]
which is equivalent to
\[
\dfrac{5^{n-u-1}+1}3>0.
\]
So \eqref{checkineq3} holds, and we have shown
condition (C2) is satisfied in all cases.
\end{proof}
\begin{lem}\label{rigidlem}
If a modular form $f_{i/j}$
identified in Theorem \ref{main_5} satisfies
conditions (C1) through (C3), then it satisfies 
condition (C4) if and only if $L_2f_{i/j}$ is divisible by
$E_4^j$ in $M_*(2)_{\mathbb Z/5}$.
\end{lem}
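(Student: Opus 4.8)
The plan is to deduce the lemma from Behrens' rigidity result (Proposition \ref{rigidity}) together with Serre's congruence criterion (Proposition \ref{Serre}), after observing that $\ell_0=2$ is a topological generator of $\mathbb{Z}_5^\times$.

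\emph{Reducing condition (C4) to the single prime $\ell=2$.} First I would record that $f_{i/j}$ has weight $24i$, which is divisible by $p-1=4$ and hence meets the weight hypothesis of Proposition \ref{rigidity}, and that $f_{i/j}$ satisfies (C1)--(C3) by Corollary \ref{C1C3} and Proposition \ref{C2ONLY}. Since $2$ has order $4$ in $(\mathbb{Z}/5)^\times$ while $2^4=16=1+3\cdot 5$ is a topological generator of $1+5\mathbb{Z}_5$, the prime $2$ is a topological generator of $\mathbb{Z}_5^\times$. Proposition \ref{rigidity} then applies with $\ell_0=2$ and shows that $f_{i/j}$ satisfies (C4) for every prime $\ell\neq 5$ if and only if it satisfies (C4) for $\ell=2$, i.e.\ if and only if there exists $g\in M_{24i-4j}(2)$ with $(L_2 f_{i/j})(q)\equiv g(q)\bmod 5$.

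\emph{Translating the $\ell=2$ congruence into $E_4$-divisibility.} Here I would use that $L_2 f_{i/j}=V_2 f_{i/j}-f_{i/j}$ lies in $M_{24i}(2)$ (both $V_2$ and $\iota_2$ preserve weight) together with the congruence $E_4(q)\equiv 1\bmod 5$, the case $p=5$ of $E_{p-1}\equiv 1\bmod p$. For the ``if'' direction, if $L_2 f_{i/j}=E_4^{\,j}h$ in $M_*(2)_{\mathbb{Z}/5}$ with $h$ of weight $24i-4j$, then choosing a lift $g\in M_{24i-4j}(2)$ of $h$ — possible because $2$ is invertible mod $5$, so the reduction $M_{24i-4j}(2)\to M_{24i-4j}(2)_{\mathbb{Z}/5}=M_{24i-4j}(2)\otimes\mathbb{Z}/5$ is surjective — gives $L_2 f_{i/j}\equiv E_4^{\,j}h\equiv h\equiv g\bmod 5$, which is (C4) at $\ell=2$. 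For the ``only if'' direction, given such a $g$ I would regard $g$ and $L_2 f_{i/j}$ as elements of $M_*(2)_{\mathbb{Z}/5}$ of weights $24i-4j<24i$ (strict inequality since $j\geq 1$) having equal $q$-expansions; as $24i-4j\equiv 24i\bmod 4$, Proposition \ref{Serre} at level $2$ and prime $5$ forces $L_2 f_{i/j}=E_4^{(24i-(24i-4j))/4}\,g=E_4^{\,j}\,g$ in $M_*(2)_{\mathbb{Z}/5}$, so $L_2 f_{i/j}$ is divisible by $E_4^{\,j}$ there. Combining the two displayed equivalences proves the lemma.

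\emph{Expected difficulty.} I do not anticipate a genuine obstacle: the content is entirely the combination of Propositions \ref{rigidity} and \ref{Serre}. The only points requiring attention are verifying that $2$ really is a topological generator of $\mathbb{Z}_5^\times$ so that Proposition \ref{rigidity} may be invoked with $\ell_0=2$, and that reduction mod $5$ is surjective on $M_{24i-4j}(2)$; both are immediate from $2$ being a unit modulo $5$, and the rest is a formal unwinding of the definitions of $L_2$ and $V_2$.
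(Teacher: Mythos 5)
Your argument is correct and follows exactly the paper's route: reduce condition (C4) to the single prime $\ell=2$ via Proposition \ref{rigidity} (after noting $2$ is a topological generator of $\mathbb Z_5^{\times}$), then convert the resulting mod $5$ congruence with a form in $M_{24i-4j}(2)$ into divisibility by $E_4^{j}$ via Proposition \ref{Serre}. The paper's own proof is just a two-sentence version of this; your write-up supplies the same steps with the details (the order of $2$ in $\mathbb Z_5^{\times}$, the weight check, and both directions of the Serre translation) made explicit.
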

\begin{proof}
Since 2 is a topological generator of $\mathbb Z_5^{\times}$,
checking $f_{i/j}$ satisfies condition (C4) is equivalent to showing there exists $g\in M_{24i-4j}(2)$ such
that $(L_2f_{i/j})(q)\equiv g(q)\mod 5$
by Proposition \ref{rigidity}.  The lemma then follows
from Proposition \ref{Serre}.  
\end{proof}

The remaining goal of this section is
to complete the proof of Theorem \ref{main_5}
by showing that the modular forms $f_{i/j}$
as identified in the theorem satisfy condition (C4).
To begin, we establish a
method for computing the $E_4$-divisibility
required by Lemma \ref{rigidlem}.
\begin{prop}\label{method}
If $f\in M_t$, then $L_2f\in M_t(2)_{\mathbb Z/5}$
is expressible as a homogeneous element
of $\mathbb Z/5[\mu,\e]$ of degree $t/4$,
and if $y=4\mu/\e+1$, then
\begin{equation}\label{inhomog}
L_2f=\e^{t/4}P(y)
\end{equation}
for an inhomogeneous polynomial $P(y)\in\mathbb Z/5[y]$.
Moreover, $L_2f$ is divisible by $E_4^j$ in
$\MTZF$ if and only if $P(y)=O(y^j)$
(that is, $P(y)$ is divisible by $y^j$)
in $\mathbb Z/5[y]$.
\end{prop}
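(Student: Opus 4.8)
The plan is to transport the entire computation into the polynomial ring $\mathbb Z/5[\mu,\e]$ and to observe that, modulo $5$, the form $E_4$ is literally $\e y$. First I would prove the structural claim. Writing $f$ as a $\mathbb Z$-linear combination of monomials $\D^aE_4^b$ with $12a+4b=t$ — possible by Proposition \ref{basis} together with the endpoint cases $a=0$, $b=0$ — and using that $V_2$ is a ring homomorphism, one obtains $V_2f$ and $f$ by substituting the identities \eqref{E4identity}--\eqref{vdeltaidentity}. Since the right-hand sides of those identities are polynomials in $\mu,\e$ with \emph{integer} coefficients, so are $V_2f$, $f$, and hence $L_2f=V_2f-f$; and because $\D,E_4$ have weights $12,4$ while $\mu,\e$ both have weight $4$, this polynomial is homogeneous of degree $t/4$. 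Reducing mod $5$ gives the first assertion. At this point I would also record, from Proposition \ref{leveltwo} and the relation $\delta^2=\mu+\e$, that for $4\mid t$ the space $M_t(2)_{\mathbb Z/5}$ is \emph{exactly} the homogeneous degree-$(t/4)$ component of $\mathbb Z/5[\mu,\e]$, with $\mu,\e$ algebraically independent; this identification drives both directions below.

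Next, writing $L_2f=\sum_{k=0}^{t/4}a_k\mu^k\e^{t/4-k}$ and factoring out $\e^{t/4}$ gives $L_2f=\e^{t/4}Q(\mu/\e)$ for a polynomial $Q\in\mathbb Z/5[x]$ with $\deg Q\le t/4$. Since $y=4\mu/\e+1$, i.e.\ $\mu/\e=4^{-1}(y-1)$ in $\mathbb Z/5$, the polynomial $P(y):=Q(4^{-1}(y-1))$ satisfies \eqref{inhomog} and has the same degree $\le t/4$; being an arbitrary polynomial in the single variable $y$, it is typically inhomogeneous even though $L_2f$ is homogeneous in $\mu,\e$. The crucial remark for the divisibility equivalence is that reducing \eqref{E4identity} mod $5$ gives $E_4=4\mu+\e=\e y$, hence $E_4^j=\e^jy^j$.

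With that in hand, suppose first $P(y)=y^jR(y)$. Then $L_2f=\e^{t/4}y^jR(y)=E_4^j\bigl(\e^{t/4-j}R(y)\bigr)$, and the bound $\deg R\le t/4-j$ guarantees that $\e^{t/4-j}R(y)$ is a genuine homogeneous element of $\mathbb Z/5[\mu,\e]$ of degree $t/4-j$, i.e.\ lies in $M_{t-4j}(2)_{\mathbb Z/5}$, so $E_4^j\mid L_2f$. Conversely, if $L_2f=E_4^jg$ then $g$ has weight $t-4j$, so by the structural claim $g=\e^{t/4-j}\tilde Q(\mu/\e)$ for some $\tilde Q\in\mathbb Z/5[x]$ with $\deg\tilde Q\le t/4-j$; thus $L_2f=(\e y)^j\e^{t/4-j}\tilde Q(\mu/\e)=\e^{t/4}y^j\tilde Q(\mu/\e)$. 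Comparing with $L_2f=\e^{t/4}P(y)$ and cancelling $\e^{t/4}$ inside $\mathbb Z/5[\mu,\e,\e^{-1}]$ — where $\e$ is a unit and $\mathbb Z/5[y]$ sits as a polynomial subring — yields $P(y)=y^j\tilde Q(4^{-1}(y-1))$, whence $y^j\mid P(y)$.

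I expect the main obstacle to be the bookkeeping at the two points where a formal expression in $\mu,\e$ (or in $y$) must be certified to be an honest element of the correct graded piece of $M_*(2)_{\mathbb Z/5}$: that $L_2f$ has coefficients in $\mathbb Z$, not merely in $\mathbb Z[1/2]$, and — in the converse direction — that the quotient $g=L_2f/E_4^j$ really is a polynomial in $\mu$ and $\e$ of degree $t/4-j$. Both follow from the presentation $M_*(2)[1/2]=\mathbb Z[1/2][\delta,\e]$ of Proposition \ref{leveltwo} together with the fact that weights divisible by $4$ involve only even powers of $\delta$, hence only $\delta^2=\mu+\e$; these points deserve to be spelled out rather than left implicit.
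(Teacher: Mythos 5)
Your proposal is correct and follows essentially the same route as the paper's proof: decompose $f$ via Proposition \ref{basis}, apply the identities \eqref{E4identity}--\eqref{vdeltaidentity} to land in the homogeneous degree-$t/4$ part of $\mathbb Z/5[\mu,\e]$, dehomogenize via $x=\mu/\e$ with the substitution $x=4y+1$, and use $E_4\equiv4\mu+\e=\e y$ for the divisibility equivalence. The only difference is that you spell out the degree bookkeeping in both directions of the ``if and only if,'' which the paper leaves implicit.
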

\begin{proof}
By Proposition \ref{basis}, $f\in M_t$
is a $\mathbb Z$-linear combination of terms
of the form $\D^a E_4^b$ for non-negative integers
$a$, $b$ with $12a+4b=t$.
From the basic properties of $L_N$ outlined
in Section \ref{Intro}, as well as
Equations \eqref{E4identity}-\eqref{vdeltaidentity},
it follows that
\[
L_2(\D^aE_4^b)=V_2\D^aV_2E_4^b-\D^aE_4^b=
(\mu^2\e)^a(4\mu+16\e)^b-(64\mu\e^2)^a(64\mu+16\e)^b
\in M_t(2)
\]
which is a homogeneous polynomial in $\mu$ and $\e$
over the integers of degree $3a+b=t/4$.  Passing
to $M_t(2)_{\mathbb Z/5}=M_t(2)\otimes\mathbb Z/5$
yields an expression of $L_2(\D^aE_4^b)$
as a homogeneous polynomial of the same degree
over $\mathbb Z/5$.  Thus, $L_2f\in M_t(2)_{\mathbb Z/5}$ is a
sum of such homogeneous polynomials by the linearity
of $L_2$.  If we put $x=\mu/\e$, then $L_2f=\e^{t/4}P(x)$
where $P(x)\in\mathbb Z/5[x]$, and making the change
of variable $x=4y+1$ (so that $y=4x+1$) yields
Equation \eqref{inhomog}.

By Equation \eqref{E4identity}, $L_2f$ is divisible by $E_4^j$
in $\MTZF$ if and only if the corresponding homogeneous
element of $\mathbb Z/5[\mu,\e]$ is divisible by
$(4\mu+\e)^j$.  But this is equivalent to
$P(y)$ being divisible by $y^j$ since
\[
(4\mu+\e)^j=\e^j(4x+1)^j=\e^j y^j.
\]
\end{proof}
The next four lemmas produce equations
of the form \eqref{inhomog} for various
modular forms $f$.
\begin{lem}\label{L2TERM} For $0\leq a,b\in\mathbb Z$,
$L_2(\D^a E_4^b)=\e^{3a+b}y^b
((4y+1)^{2a}-(-1)^a(4y+1)^a)$
in $\MTZF$.
\end{lem}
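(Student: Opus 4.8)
The plan is to compute $L_2(\D^a E_4^b)$ directly by substituting the identities \eqref{E4identity}--\eqref{vdeltaidentity} and then tracking the substitutions $x=\mu/\e$, $y=4x+1$ used in the proof of Proposition \ref{method}. First I would write, exactly as in that proof,
\[
L_2(\D^a E_4^b) = (\mu^2\e)^a(4\mu+16\e)^b - (64\mu\e^2)^a(64\mu+16\e)^b
\]
in $M_t(2)_{\mathbb Z/5}$, where $t=12a+4b$. The point is to reduce everything mod $5$: since $64\equiv 4$, $16\equiv 1$, and $4\equiv -1 \pmod 5$, the two monomial prefactors become $64^a = 4^a \equiv (-1)^a$ and $64^b \equiv 4^b \equiv (-1)^b$, so the second term simplifies to $(-1)^{a}(-1)^{b}\,\mu^a\e^{2a}(4\mu+\e)^b \equiv (-1)^{a}(-1)^b\mu^a\e^{2a}\cdot(-1)^b(4\mu+\e)^b$... — more carefully, I would reduce $4\mu+16\e \equiv 4\mu+\e \equiv -(\mu + 4\e)$? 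No: I'll just reduce coefficients and keep $(4\mu+\e)$, noting $16\equiv 1$. So mod $5$:
\[
L_2(\D^a E_4^b) \equiv \mu^{2a}\e^a(4\mu+\e)^b - (-1)^a(-1)^b\,\mu^a\e^{2a}(4\mu+\e)^b.
\]

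Next I would factor out the common homogeneous factor $\mu^a\e^a(4\mu+\e)^b$, giving
\[
L_2(\D^a E_4^b) \equiv \mu^a\e^a(4\mu+\e)^b\bigl(\mu^a - (-1)^{a+b}\e^a\bigr).
\]
Hmm — this does not obviously match the claimed form, so the key bookkeeping step is to be careful about which power of $(-1)$ appears. Recomputing: the second term is $64^a\cdot 64^b\,\mu^a\e^{2a}(4\mu+16\e)^b \equiv 4^{a+b}\mu^a\e^{2a}(4\mu+\e)^b$, and $4^{a+b}\equiv(-1)^{a+b}$. But the first term's $(4\mu+16\e)^b$ reduces to $(4\mu+\e)^b$ as well, so the $(4\mu+\e)^b$ factor is genuinely common and the residual discrepancy is only in the powers of $\mu,\e$ and the sign $(-1)^{a+b}$; to land on the stated formula I then pass to $x=\mu/\e$ and $y=4x+1$ (equivalently $\mu/\e = 4^{-1}(y-1) \equiv -(y-1) \equiv 1-y$, so $4\mu+\e = \e(4x+1)=\e y$), and re-express $\mu^{2a}\e^a$, $\mu^a\e^{2a}$ and $(4\mu+\e)^b$ in terms of $\e$ and $y$. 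The main obstacle is precisely reconciling the sign: I expect that writing $\mu = \e\cdot x$, $4x+1 = y$ and substituting carefully shows $\mu^{2a}\e^a(4\mu+\e)^b = \e^{3a+b}x^{2a}y^b$ and $4^{a+b}\mu^a\e^{2a}(4\mu+\e)^b = \e^{3a+b}\cdot 4^{a+b}x^a y^b$, and then using $4x = y-1$, i.e.\ $4^a x^a = (y-1)^a$ and $4^{2a}x^{2a}=(y-1)^{2a}$, converts $x^{2a}$ and $4^{a}x^a$ into polynomials in $y$; after another reduction $4^{-2a}(y-1)^{2a}$ vs.\ $(-1)^a(y-1)^a\cdot$(stuff) one recognizes $(4y+1)$, not $(y-1)$, as the relevant linear form.

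To avoid that sign confusion I would instead work from the start with the variable the paper actually uses. Since Proposition \ref{method} already records that $L_2 f = \e^{t/4}P(y)$ with $y=4\mu/\e+1$, I only need to compute $P(y)$ for $f=\D^aE_4^b$. From $E_4 = 64\mu+16\e$ one gets $E_4/\e = 64x+16 = 16(4x+1) = 16y$, so $E_4 = 16\e y \equiv \e y$; from $\D = 64\mu\e^2$ one gets $\D = 64\e^3 x$ and, using $4x = y-1$, $\D = 16\e^3(y-1)$; similarly $V_2E_4 = 4\mu+16\e = \e(4x+16)$ and $V_2\D = \mu^2\e = \e^3 x^2$. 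Substituting these into $L_2(\D^aE_4^b)=V_2\D^a\,V_2E_4^b - \D^a E_4^b$, dividing by $\e^{3a+b}=\e^{t/4}$, and reducing mod $5$ (where $16\equiv 1$, $64\equiv 4$, $4x+16 \equiv 4x+1 = y$, $16(y-1)\equiv y-1$, so actually $\D/\e^3 \equiv y-1$ and $E_4/\e \equiv y$, while $V_2\D/\e^3 = x^2$ and $V_2E_4/\e = 4x+16\equiv 4x+1=y$) gives $P(y) = x^{2a}y^b - (y-1)^a y^b = y^b\bigl(x^{2a}-(y-1)^a\bigr)$. Finally $x = 4^{-1}(y-1)\equiv 4(y-1)\equiv -(y-1)\equiv ...$; rewriting $x$ in terms of $y$ and comparing with $(4y+1)$ — here $4y+1 = 4(4x+1)+1 = 16x+5 \equiv x$ mod $5$! — so $x \equiv 4y+1$, hence $x^{2a} = (4y+1)^{2a}$ and $(y-1)^a$: note $y-1 = 4x$, and $4x \equiv 4(4y+1) = 16y+4 \equiv y+4 \equiv y-1$; so $(y-1)^a = (4x)^a = 4^a x^a \equiv (-1)^a(4y+1)^a$. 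This yields $P(y) = y^b\bigl((4y+1)^{2a} - (-1)^a(4y+1)^a\bigr)$, and multiplying back by $\e^{3a+b}$ gives the claimed identity. The only real work, which I would carry out explicitly, is this chain of mod-$5$ identifications $x \equiv 4y+1$ and $y-1 = 4x \equiv -(4y+1)$; everything else is substitution and the linearity of $L_2$.
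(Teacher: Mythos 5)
Your final derivation is correct and follows essentially the same route as the paper: reduce $\D$, $V_2\D$, $E_4$, $V_2E_4$ modulo $5$ via Equations \eqref{E4identity}--\eqref{vdeltaidentity}, factor out $\e^{3a+b}$, and use $x\equiv 4y+1$ and $y-1=4x\equiv 4(4y+1)$ to land on the stated formula. (The spurious sign $(-1)^{a+b}$ in your abandoned first pass came from reading a factor $64^b$ off $(64\mu+16\e)^b=16^b(4\mu+\e)^b\equiv(4\mu+\e)^b$; the correct sign is $(-1)^a$, exactly as your second computation finds.)
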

\begin{proof}
Equation \eqref{vE4identity} implies that
$V_2E_4=E_4$ in $\MTZF$.  Therefore,
using the notation from the proof
of Proposition \ref{method},
\begin{align*}
L_2(\D^a E_4^b)&=V_2\D^aV_2E_4^b-\D^aE_4^b\\
&=E_4^b(V_2\D^a-\D^a)\\
&=(4\mu+\e)^b(\mu^{2a}\e^a-(-1)^a\mu^a\e^{2a})\\
&=\e^{3a+b}(4x+1)^b(x^{2a}-(-1)^ax^a)\\
&=\e^{3a+b}y^b((4y-1)^{2a}-(-1)^a(4y+1)^a)
\end{align*}
in $\MTZF$.
\end{proof}
\begin{lem}\label{L2ONDELTA}
For integers $r\geq1$ and $n\geq0$,
$L_2\D^{2r\cdot5^n}=\e^{6r\cdot5^n}
(3ry^{5^n}+O(y^{a_n}))$ in $\MTZF$.
\end{lem}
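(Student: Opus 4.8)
The plan is to reduce the statement to Lemma~\ref{L2TERM} by applying the Verschiebung to a power of $\D$ and then carefully expanding the resulting polynomial in $y$ up to the relevant order. Since $\D = \D^1 E_4^0$, Lemma~\ref{L2TERM} with $a = 2r\cdot 5^n$ and $b = 0$ gives immediately
\[
L_2\D^{2r\cdot5^n} = \e^{6r\cdot5^n}\bigl((4y+1)^{4r\cdot5^n} - (4y+1)^{2r\cdot5^n}\bigr)
\]
in $\MTZF$ (the sign $(-1)^a$ is $+1$ because $a = 2r\cdot 5^n$ is even). So the entire content is to show that, modulo $5$,
\[
(4y+1)^{4r\cdot5^n} - (4y+1)^{2r\cdot5^n} \equiv 3r\,y^{5^n} + O(y^{a_n}).
\]
In other words, I must compute the coefficients of $y^0, y^1, \dots, y^{a_n - 1}$ in the difference and check that they all vanish except for the coefficient of $y^{5^n}$, which should be $3r \bmod 5$.

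The key tool is the Lucas-style collapsing of binomial coefficients mod a prime. Writing $m = 2r\cdot 5^n$, I would use that over $\mathbb Z/5$ one has $(4y+1)^{5^n} \equiv (4y)^{5^n} + 1 = 4^{5^n}y^{5^n} + 1$, and since $4^{5^n} \equiv 4 \equiv -1 \pmod 5$ (as $4 \equiv -1$ and odd powers of $-1$ are $-1$), this is $1 - y^{5^n}$. Therefore $(4y+1)^{2r\cdot 5^n} \equiv (1 - y^{5^n})^{2r}$ and $(4y+1)^{4r\cdot 5^n} \equiv (1 - y^{5^n})^{4r}$ in $\mathbb Z/5[y]$. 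The difference becomes
\[
(1-y^{5^n})^{4r} - (1-y^{5^n})^{2r},
\]
a polynomial in $y^{5^n}$. Expanding each factor by the binomial theorem, the constant terms cancel, and the coefficient of $y^{5^n}$ is $\binom{4r}{1}(-1) - \binom{2r}{1}(-1) = -4r + 2r = -2r \equiv 3r \pmod 5$. All other nonzero terms of $(1-y^{5^n})^{4r} - (1-y^{5^n})^{2r}$ are supported in degrees that are multiples of $5^n$ and at least $2\cdot 5^n$; since $a_n = 5^n + 5^{n-1} - 1 < 2\cdot 5^n$ for $n \geq 1$ (and the claim is trivial for $n = 0$, where $a_0 = 0$ and the "error" term is everything beyond $y^0$ — one checks the constant term vanishes there directly), all those higher terms are absorbed into $O(y^{a_n})$. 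This gives exactly the claimed form.

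The main obstacle, such as it is, is bookkeeping: making sure the exponent arithmetic $4^{5^n} \equiv -1$ and the passage $(4y+1)^{5^n} \equiv 1 - y^{5^n}$ are applied correctly (this uses the Frobenius on $\mathbb Z/5[y]$, i.e.\ the freshman's dream $(u+v)^{5^n} \equiv u^{5^n} + v^{5^n}$), and verifying the degree bound $2\cdot 5^n > a_n$ so that the error genuinely lands in $O(y^{a_n})$. I would also double-check the edge case $n = 0$ separately, since there $a_0 = 0$ and $5^0 = 1$, so the assertion reads $L_2\D^{2r} = \e^{6r}(3ry + O(1))$, which must be interpreted as: the constant term of $(4y+1)^{4r} - (4y+1)^{2r}$ is zero (clear) and the linear coefficient is $3r \bmod 5$ (which is $-2r$, consistent with the general computation). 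No genuinely hard step arises; the lemma is essentially a mod-$5$ binomial expansion organized around the Frobenius.
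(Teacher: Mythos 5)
Your proposal is correct and follows essentially the same route as the paper: apply Lemma~\ref{L2TERM} with $a=2r\cdot5^n$, $b=0$, collapse $(4y+1)^{5^n}$ to $4y^{5^n}+1=1-y^{5^n}$ via the Frobenius, expand binomially so the constant terms cancel and the $y^{5^n}$ coefficient is $-2r\equiv 3r\pmod 5$, and note $2\cdot5^n>a_n$ so the remaining terms are $O(y^{a_n})$. The only difference is cosmetic (writing $1-y^{5^n}$ rather than $4y^{5^n}+1$), and your extra attention to the $n=0$ edge case is harmless.
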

\begin{proof}
By Lemma \ref{L2TERM},
\begin{align*}
L_2(\D^{2r\cdot5^n})&=\e^{6\cdot5^n}
((4y+1)^{4r\cdot5^n}-(4y+1)^{2r\cdot5^n})\\
&=\e^{6\cdot5^n}
((4y^{5^n}+1)^{4r}-(4y^{5^n}+1)^{2r})\\
&=\e^{6\cdot5^n}(1+4r\cdot4y^{5^n}+O(y^{2\cdot5^n})
-(1+2r\cdot4y^{5^n}+O(y^{2\cdot5^n})))\\
&=\e^{6\cdot5^n}(3ry^{5^n}+O(y^{a_n})).
\end{align*}
\end{proof}
\begin{lem}\label{CDEXTREME}
For integers $n\geq2$ and $r\geq1$, 
\begin{align*}
L_2C_{0,n,r}&=r\e^{6r\cdot5^n}(2y^{5^n}
+4y^{27\cdot5^{n-2}}+4y^{28\cdot5^{n-2}}
+3y^{29\cdot5^{n-2}}+O(y^{a_n})),\\
L_2D_{0,n,r}&=r\e^{6r\cdot5^n}(
y^{27\cdot5^{n-2}}+y^{28\cdot5^{n-2}}
+3y^{29\cdot5^{n-2}}
+O(y^{a_n})),\quad\text{and}\\
L_2D_{n-2,n,r}&=r\e^{6r\cdot5^n}
(2y^{6\cdot5^{n-1}-3}
+2y^{6\cdot5^{n-1}-2}
+O(y^{a_n})).
\end{align*}
in $\MTZF$.
\end{lem}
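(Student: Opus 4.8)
The plan is to derive all three identities by one and the same mechanism, since each of $C_{0,n,r}$, $D_{0,n,r}$, and $D_{n-2,n,r}$ is an integer multiple of a single monomial $\D^{a}E_4^{b}$. Applying Lemma \ref{L2TERM} to each, one first records
\[
L_2\big(c\,\D^{a}E_4^{b}\big)=c\,\e^{3a+b}\,y^{b}\big((4y+1)^{2a}-(-1)^{a}(4y+1)^{a}\big)
\]
in $\MTZF$, where $c\in\{4r,3r,r\}$ is the leading coefficient in the relevant definition. Two pieces of elementary bookkeeping set things up: in each of the three cases $3a+b=6r\cdot5^{n}$, which accounts for the asserted power of $\e$; and the exponent $a$ on $\D$ is even for $C_{0,n,r}$ but odd for $D_{0,n,r}$ and for $D_{n-2,n,r}$, so the bracketed factor is the difference $(4y+1)^{2a}-(4y+1)^{a}$ in the first case and the sum $(4y+1)^{2a}+(4y+1)^{a}$ in the other two.

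The essential observation is that each asserted identity records the answer only modulo $y^{a_{n}}$, and $a_{n}=5^{n}+5^{n-1}-1=30\cdot5^{n-2}-1$. Since the factor $y^{b}$ (with $b$ the exponent on $E_4$) has already been extracted, it suffices to expand $(4y+1)^{2a}\mp(4y+1)^{a}$ only to low order in $y$: computing the difference $a_{n}-b$ shows it is enough to work modulo $y^{6\cdot5^{n-2}}$ for $C_{0,n,r}$, modulo $y^{3\cdot5^{n-2}}$ for $D_{0,n,r}$, and merely modulo $y^{2}$ for $D_{n-2,n,r}$. These truncations are accessible through Lucas' theorem: over $\mathbb Z/5$ one has $(4y+1)^{5^{i}}\equiv 1+4y^{5^{i}}$, so for $k<5^{n}$ the coefficient of $y^{k}$ in $(4y+1)^{a}$ depends only on the base-$5$ digits of $a$ in positions $0,\dots,n-1$; and in all three exponents the summand $2(r-1)5^{n}$ (and, for $D_{n-2,n,r}$, also $8\cdot5^{n-1}$) occupies only positions $\ge n-1$, so the low digits we need — and hence the truncations — are independent of $r$.

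Carrying this out, one reads off from $42=(132)_{5}$ that for $C_{0,n,r}$, writing $z=y^{5^{n-2}}$,
\[
(4y+1)^{a}\equiv(1+4z)^{2}(1+4z^{5})^{3}\equiv 1+3z+z^{2}+2z^{5}\pmod{5,\,z^{6}},
\]
so squaring gives $(4y+1)^{2a}\equiv 1+z+z^{2}+z^{3}+z^{4}+4z^{5}$ and hence $(4y+1)^{2a}-(4y+1)^{a}\equiv 3z+z^{3}+z^{4}+2z^{5}$; multiplying by $4r\,y^{24\cdot5^{n-2}}$ and reducing the scalars ($12r\equiv2r$, $8r\equiv3r$) yields the first formula. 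Similarly $41=(131)_{5}$ gives $(4y+1)^{a}\equiv 1+4z$ and $(4y+1)^{2a}\equiv 1+3z+z^{2}$ modulo $z^{3}$, hence the sum $2+2z+z^{2}$, which after multiplication by $3r\,y^{27\cdot5^{n-2}}$ and reduction ($6r\equiv r$) yields the second formula. Finally, for $D_{n-2,n,r}$ — the $m=n-2$ instance of $D_{m,n,r}$, with $\D$-exponent $8\cdot5^{n-1}+1+2(r-1)5^{n}\equiv1\pmod5$ — one gets $(4y+1)^{a}\equiv 1+4y$ and $(4y+1)^{2a}\equiv 1+3y$ modulo $y^{2}$, so the sum is $2+2y$, and multiplying by $r\,y^{6\cdot5^{n-1}-3}$ yields the third formula. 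In every case one checks at the end that $30\cdot5^{n-2}=a_{n}+1$, so the monomials not displayed are swept into $O(y^{a_{n}})$.

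The step I expect to cause the most trouble is the precision bookkeeping in the $C_{0,n,r}$ case. After pulling out $y^{24\cdot5^{n-2}}$ one must expand $(4y+1)^{2a}-(4y+1)^{a}$ out to degree $5$ in $z=y^{5^{n-2}}$ — stopping at quadratic order would miss three of the four summands of the answer — which forces the slightly fiddly squaring $(1+3z+z^{2}+2z^{5})^{2}\bmod z^{6}$ to be carried out term by term, and also requires noticing the $(1+4z^{5})^{3}$ factor coming from the digit $3$ in position $n-1$ of the exponent. Keeping straight exactly which monomials land strictly below the $y^{a_{n}}$ cutoff, as opposed to at $y^{a_{n}}$ or beyond where they disappear into the error term, is the only genuinely delicate point; everything else is the routine algebra of Lemma \ref{L2TERM} together with Lucas' theorem.
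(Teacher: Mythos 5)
Your proposal is correct and follows essentially the same route as the paper: both reduce each term to Lemma \ref{L2TERM}, exploit $(4y+1)^{5^k}=4y^{5^k}+1$ in characteristic 5 (your Lucas-theorem phrasing versus the paper's explicit base-5 factorization of the exponents, e.g.\ $100r-16=25(4r-1)+5+4$) to see that the $2(r-1)5^n$ part of the exponent is invisible below $y^{a_n}$, and then expand to the stated precision; your intermediate truncations $1+3z+z^2+2z^5$, $2+2z+z^2$, and $2+2y$ match the paper's computations exactly. The only cosmetic difference is that you obtain $(4y+1)^{2a}$ by squaring the truncation of $(4y+1)^a$ rather than expanding it directly from the digits of $2a$, which changes nothing of substance.
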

\begin{proof}
To begin, we compute $L_2C_{0,n,r}$.
By Lemma \ref{L2TERM},
\begin{align}\label{C01}
\begin{split}
L_2C_{0,n,r}&=4rL_2(
\D^{42\cdot5^{n-2}+2(r-1)5^n}E_4^{24\cdot5^{n-2}}
)\\
&=4r\e^{6r\cdot5^n}y^{24\cdot5^{n-2}}
((4y+1)^{5^{n-2}(100r-16)}-(4y+1)^{5^{n-2}(50r-8)})\\
&=4r\e^{6r\cdot5^n}y^{24\cdot5^{n-2}}
((4y^{5^{n-2}}+1)^{25(4r-1)+5+4}-
(4y^{5^{n-2}}+1)^{25(2r-1)+5(3)+2}).
\end{split}
\end{align}
In $\mathbb Z/5[y]$,
\begin{align*}
(4y^{5^{n-2}}+1)^{25(4r-1)+5+4}&=
(4y^{5^n}+1)^{4r-1}(4y^{5^{n-1}}+1)
(4y^{5^{n-2}}+1)^4\\
&=(4y^{5^n}+1)^{4r-1}(4y^{5^{n-1}}+1)
(1+y^{5^{n-2}}+y^{2\cdot5^{n-2}}+y^{3\cdot5^{n-2}}+
y^{4\cdot5^{n-2}})\\
&=(4y^{5^n}+1)^{4r-1}
(1+y^{5^{n-2}}+y^{2\cdot5^{n-2}}+y^{3\cdot5^{n-2}}+
y^{4\cdot5^{n-2}}+4y^{5^{n-1}}+O(y^{6\cdot5^{n-2}}))\\
&=1+y^{5^{n-2}}+y^{2\cdot5^{n-2}}+y^{3\cdot5^{n-2}}+
y^{4\cdot5^{n-2}}+4y^{5^{n-1}}+O(y^{6\cdot5^{n-2}})
\end{align*}
and
\begin{align*}
(4y^{5^{n-2}}+1)^{25(2r-1)+5(3)+2}&=
(4y^{5^n}+1)^{2r-1}(4y^{5^{n-1}}+1)^3(4y^{5^{n-2}}+1)^2\\
&=(4y^{5^n}+1)^{2r-1}(1+2y^{5^{n-1}}+3y^{2\cdot5^{n-1}}
+4y^{3\cdot5^{n-1}})
(1+3y^{5^{n-2}}+y^{2\cdot5^{n-2}})\\
&=(4y^{5^n}+1)^{2r-1}(1+3y^{5^{n-2}}+y^{2\cdot5^{n-2}}
+2y^{5^{n-1}}+O(y^{6\cdot5^{n-2}}))\\
&=1+3y^{5^{n-2}}+y^{2\cdot5^{n-2}}
+2y^{5^{n-1}}+O(y^{6\cdot5^{n-2}})
\end{align*}
which, when combined with \eqref{C01}, yield
\begin{align*}
L_2C_{0,n,r}&=4r\e^{6r\cdot5^n}y^{24\cdot5^{n-2}}
(3y^{5^{n-2}}+y^{3\cdot5^{n-2}}+y^{4\cdot5^{n-2}}
+2y^{5^{n-1}}+O(y^{6\cdot5^{n-2}}))\\
&=r\e^{6r\cdot5^n}(2y^{5^n}
+4y^{27\cdot5^{n-2}}+4y^{28\cdot5^{n-2}}
+3y^{29\cdot5^{n-2}}+O(y^{a_n}))
\end{align*}
in $\MTZF$. 

Next, we compute $L_2D_{0,n,r}$.  By Lemma \ref{L2TERM},
\begin{align}\label{D01}
\begin{split}
L_2D_{0,n,r}&=3rL_2(\D^{41\cdot5^{n-2}+2(r-1)5^n}
E_4^{27\cdot5^{n-2}})\\
&=3r\e^{6r\cdot5^n}y^{27\cdot5^{n-2}}
((4y+1)^{5^{n-2}(100r-18)}+(4y+1)^{5^{n-2}(50r-9)})\\
&=3r\e^{6r\cdot5^n}y^{27\cdot5^{n-2}}
((4y^{5^{n-2}}+1)^{25(4r-1)+5+2}
+(4y^{5^{n-2}}+1)^{25(2r-1)+5(3)+1}).
\end{split}
\end{align}
In $\mathbb Z/5[y]$,
\begin{align*}
(4y^{5^{n-2}}+1)^{25(4r-1)+5+2}&=(4y^{5^n}+1)^{4r-1}
(4y^{5^{n-1}}+1)(4y^{5^{n-2}}+1)^2\\
&=(4y^{5^n}+1)^{4r-1}(4y^{5^{n-1}}+1)
(1+3y^{5^{n-2}}+y^{2\cdot5^{n-2}})\\
&=1+3y^{5^{n-2}}+y^{2\cdot5^{n-2}}+O(y^{3\cdot5^{n-2}})
\end{align*}
and
\begin{align*}
(4y^{5^{n-2}}+1)^{25(2r-1)+5(3)+1}&=(4y^{5^n}+1)^{2r-1}
(4y^{5^{n-1}}+1)^3(4y^{5^{n-2}}+1)\\
&=1+4y^{5^{n-2}}+O(y^{3\cdot5^{n-2}})
\end{align*}
which, when combined with \eqref{D01}, yield
\begin{align*}
L_2D_{0,n,r}&=3r\e^{6\cdot5^n}y^{27\cdot5^{n-2}}
(2+2y^{5^{n-2}}+y^{2\cdot5^{n-2}}+O(y^{3\cdot5^{n-2}}))\\
&=r\e^{6r\cdot5^n}(
y^{27\cdot5^{n-2}}+y^{28\cdot5^{n-2}}+3y^{29\cdot5^{n-2}}
+O(y^{a_n}))
\end{align*}
in $\MTZF$.  

Finally, we compute $L_2D_{n-2,n,r}$.
By Lemma \ref{L2TERM},
\begin{align*}
L_2D_{n-2,n,r}&=rL_2(\D^{8\cdot5^{n-1}+1+2(r-1)5^n}
E_4^{6\cdot5^{n-1}-3})\\
&=r\e^{6r\cdot5^n}y^{6\cdot5^{n-1}-3}
((4y+1)^{5^{n-1}(20r-4)+2}+(4y+1)^{5^{n-1}(10r-2)+1})\\
&=r\e^{6r\cdot5^n}y^{6\cdot5^{n-1}-3}
((4y^{5^{n-1}}+1)^{20r-4}(4y+1)^2
+(4y^{5^{n-1}}+1)^{10r-2}(4y+1))\\
&=r\e^{6r\cdot5^n}y^{6\cdot5^{n-1}-3}(2+2y+O(y^2))\\
&=r\e^{6r\cdot5^n}(2y^{6\cdot5^{n-1}-3}
+2y^{6\cdot5^{n-1}-2}+O(y^{a_n}))
\end{align*}
in $\MTZF$.  
\end{proof}
\begin{lem}\label{CDGENERAL}
For $1\leq m\leq n-2$,
\begin{equation*}
L_2C_{m,n,r}=r\e^{6r\cdot5^n}
(4y^{6\cdot5^{n-1}-5^{n-m-1}}
+3y^{6\cdot5^{n-1}-3\cdot5^{n-m-2}}
+3y^{6\cdot5^{n-1}-2\cdot5^{n-m-2}}
+O(y^{a_n}))
\end{equation*}
and for $1\leq m\leq n-3$,
\begin{equation*}
L_2D_{m,n,r}=r\e^{6r\cdot5^n}
(2y^{6\cdot5^{n-1}-3\cdot5^{n-m-2}}
+2y^{6\cdot5^{n-1}-2\cdot5^{n-m-2}}
+y^{6\cdot5^{n-1}-5^{n-m-2}}
+O(y^{a_n}))
\end{equation*}
in $\MTZF$.
\end{lem}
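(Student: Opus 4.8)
**The plan is to apply Lemma \ref{L2TERM} directly to each of the two modular forms $C_{m,n,r}$ and $D_{m,n,r}$, exactly as was done for $C_{0,n,r}$, $D_{0,n,r}$, and $D_{n-2,n,r}$ in Lemma \ref{CDEXTREME}, and then carefully expand the resulting $y$-polynomials in $\mathbb Z/5[y]$ using the Frobenius identity $(4z+1)^{5^k} = 4z^{5^k}+1$.** First, recall that $C_{m,n,r} = 3r\D^{8\cdot5^{n-1}+2\cdot5^{n-m-2}+2(r-1)5^n}E_4^{6\cdot5^{n-1}-6\cdot5^{n-m-2}}$. Writing $a = 8\cdot5^{n-1}+2\cdot5^{n-m-2}+2(r-1)5^n$ and $b = 6\cdot5^{n-1}-6\cdot5^{n-m-2}$, Lemma \ref{L2TERM} gives $L_2 C_{m,n,r} = 3r\,\e^{3a+b}y^b((4y+1)^{2a}-(4y+1)^a)$ in $\MTZF$ (using that $a$ is even, so the sign $(-1)^a$ is $+1$; note $2\cdot5^{n-m-2}$ and $2(r-1)5^n$ are even and $8\cdot5^{n-1}$ is even). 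One checks $3a+b = 24r\cdot5^{n-1}\cdot\frac{5}{?}$... more precisely $3a+b = 24\cdot5^{n-1}+6\cdot5^{n-m-2}-6\cdot5^{n-m-2}+6(r-1)5^n = 6r\cdot5^n$ as required, and $b = 6\cdot5^{n-1}-6\cdot5^{n-m-2}$. The exponents $2a$ and $a$ must then be reduced: $a = 5^{n-m-2}(8\cdot5^{m+1}+2)+2(r-1)5^n$, so $(4y+1)^a = (4y^{5^{n-m-2}}+1)^{8\cdot5^{m+1}+2}(4y^{5^n}+1)^{2(r-1)}$, and similarly for $2a$. Expanding the low-order part of $(4y^{5^{n-m-2}}+1)^{8\cdot5^{m+1}+2}$ via further applications of Frobenius — peeling off the $5^{m+1}$, $5^m$, \dots\ pieces as in the proof of Lemma \ref{CDEXTREME} — and the factor $(4y^{5^n}+1)^{2(r-1)} = 1 + O(y^{5^n})$ contributes nothing below $y^{a_n}$ since $a_n < 5^n+5^{n-1}\le \dots$; one then collects terms through order $y^{a_n}$.

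For $D_{m,n,r} = r\D^{8\cdot5^{n-1}+5^{n-m-2}+2(r-1)5^n}E_4^{6\cdot5^{n-1}-3\cdot5^{n-m-2}}$, the analogous computation is slightly more delicate because now the $\D$-exponent $a' = 8\cdot5^{n-1}+5^{n-m-2}+2(r-1)5^n$ contains the odd summand $5^{n-m-2}$, so $a'$ is \emph{odd} (as $8\cdot5^{n-1}$ and $2(r-1)5^n$ are even). Hence $(-1)^{a'} = -1$ and Lemma \ref{L2TERM} yields $L_2 D_{m,n,r} = r\,\e^{6r\cdot5^n}y^{b'}((4y+1)^{2a'}+(4y+1)^{a'})$ with $b' = 6\cdot5^{n-1}-3\cdot5^{n-m-2}$ — a \emph{sum} rather than a difference, which is why the leading coefficient is $2$ (the constant terms add) and why the restriction $m\le n-3$ appears: we need $n-m-2\ge1$ so that $5^{n-m-2}$ is genuinely a power of $5$ and the Frobenius factorization $(4y+1)^{c\cdot5^{n-m-2}} = (4y^{5^{n-m-2}}+1)^c$ is available with a positive exponent in the base; the case $m=n-2$ is handled separately in Lemma \ref{CDEXTREME} as $L_2D_{n-2,n,r}$.

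The key bookkeeping step — and the main obstacle — is to verify that when the exponents are written in base $5$ and the binomial expansion of $(4y^{5^{n-m-2}}+1)^c$ for the relevant $c$ (namely $c = 16\cdot5^{m+1}+2$ for $2a$ in the $C$ case, and the corresponding values for $D$) is truncated at order $y^{a_n}$, exactly the three indicated $y$-powers survive with the stated coefficients. Concretely, since $a_n = 5^n+5^{n-1}-1$ and the relevant exponents are $6\cdot5^{n-1}-5^{n-m-1}$, $6\cdot5^{n-1}-3\cdot5^{n-m-2}$, $6\cdot5^{n-1}-2\cdot5^{n-m-2}$, $6\cdot5^{n-1}-5^{n-m-2}$, one must confirm all four (for $D$) or the first three (for $C$) lie strictly below $a_n$ — which holds because $6\cdot5^{n-1} = 5^n+5^{n-1} = a_n+1$ and we are subtracting a positive multiple of a power of $5$ — while the next-smallest contributing term has exponent $\ge a_n$. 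I expect the arithmetic to go through by the same pattern of Frobenius-peeling used for $C_{0,n,r}$ and $D_{0,n,r}$: each application of Frobenius on $(4y^{5^k}+1)^5 = 4y^{5^{k+1}}+1$ removes one layer, and the low-degree terms of a product like $(4y^{5^{n-m-2}}+1)^4(4y^{5^{n-m-1}}+1)^{(\cdots)}$ are read off by multiplying out only the first few binomial coefficients mod $5$. The one place to be careful is tracking signs and the overall prefactors $3r$ versus $r$, and confirming that the $r$-dependence enters only through $(4y^{5^n}+1)^{2(r-1)}$ or $(4y^{5^n}+1)^{2r-1}$, which is $\equiv 1$ or $\equiv 4y^{5^n}+1 \bmod y^{a_n+1}$ — but $5^n > a_n$ is false ($5^n < a_n$ for $n\ge1$ is also false since $a_n = 5^n+5^{n-1}-1 > 5^n$), so in fact $y^{5^n}$ \emph{does} appear below $a_n$ and must be retained; this matches the $2y^{5^n}$ term visible in $L_2 C_{0,n,r}$ in Lemma \ref{CDEXTREME} but \emph{not} in the statement here, so I would double-check whether for $m\ge1$ the $5^n$-term is absorbed into the $O(y^{a_n})$ because $5^n > $ one of the listed exponents — indeed $6\cdot5^{n-1}-5^{n-m-1} = 5^n+5^{n-1}-5^{n-m-1} \ge 5^n$ only when $5^{n-1}\ge 5^{n-m-1}$, i.e. always, so the $y^{5^n}$ contribution is genuinely at or below the first listed term and its coefficient must be checked to be $0$ or folded in; resolving this sign/coefficient accounting is precisely where the proof requires care.
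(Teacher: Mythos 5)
Your overall strategy is exactly the paper's: apply Lemma \ref{L2TERM} to $C_{m,n,r}$ and $D_{m,n,r}$, use the parity of the $\D$-exponent to decide between a difference and a sum of powers of $(4y+1)$, factor those powers via Frobenius, and truncate at $y^{a_n}$. But as written this is a plan, not a proof: the step you yourself label ``the key bookkeeping step --- and the main obstacle,'' namely expanding $(4y^{5^n}+1)^{4r-1}(4y^{5^{n-1}}+1)(4y^{5^{n-m-2}}+1)^4$ against $(4y^{5^n}+1)^{2r-1}(4y^{5^{n-1}}+1)^3(4y^{5^{n-m-2}}+1)^2$ (and the analogous pair for $D$) and extracting the coefficients $4,3,3$ and $2,2,1$, is never carried out; you only ``expect the arithmetic to go through.'' The paper's proof consists precisely of that computation: for $m\geq1$ both $y^{5^{n-1}}$ and $y^{5^n}$ are $O(y^{6\cdot5^{n-m-2}})$, so the difference reduces to $(1+y^{5^{n-m-2}}+\cdots+y^{4\cdot5^{n-m-2}})-(1+3y^{5^{n-m-2}}+y^{2\cdot5^{n-m-2}})=3y^{5^{n-m-2}}+y^{3\cdot5^{n-m-2}}+y^{4\cdot5^{n-m-2}}$ mod $5$, which after multiplying by $3r\,y^{6\cdot5^{n-1}-6\cdot5^{n-m-2}}$ gives the three displayed terms. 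Nothing in your write-up pins down these coefficients.

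Two of your side remarks are also wrong or left hanging. First, the restriction $m\leq n-3$ for $D_{m,n,r}$ has nothing to do with the Frobenius factorization being ``available'' (it works perfectly well with $5^{n-m-2}=5^0=1$); the actual reason, recorded at the end of the paper's proof, is that for $m=n-2$ the third exponent $6\cdot5^{n-1}-5^{n-m-2}$ equals $a_n$, so that term is swallowed by $O(y^{a_n})$ and the displayed three-term formula fails, which is why $L_2D_{n-2,n,r}$ is computed separately in Lemma \ref{CDEXTREME} with only two explicit terms. Second, your closing worry about a stray $y^{5^n}$ term has a clean resolution that you do not supply: the $y^{5^n}$ contributions coming from $(4y^{5^n}+1)^{4r-1}-(4y^{5^n}+1)^{2r-1}$ must be weighed \emph{after} multiplication by the prefactor $y^{6\cdot5^{n-1}-6\cdot5^{n-m-2}}$, where they land at exponent $5^n+6\cdot5^{n-1}-6\cdot5^{n-m-2}\geq a_n$ (since $5^n\geq 5^{m+2}\cdot5^{n-m-2}>6\cdot5^{n-m-2}$) and are absorbed into the error term; meanwhile the low-order term $3y^{5^{n-m-2}}$ of the difference, which for $m=0$ is what produces the $2y^{5^n}$ in $L_2C_{0,n,r}$, lands for $m\geq1$ at exponent $6\cdot5^{n-1}-5^{n-m-1}>5^n$ and is precisely the first listed term. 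Until the coefficient extraction is actually performed and these two points are settled, the lemma is not proved.
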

\begin{proof}
First, we compute $L_2C_{m,n,r}$
for $1\leq m\leq n-2$.
By Lemma \ref{L2TERM},
\begin{align}\label{CM1}
\begin{split}
L_2C_{m,n,r}&=3rL_2(
\D^{8\cdot5^{n-1}
	+2\cdot5^{n-m-2}+2(r-1)5^n}
E_4^{6\cdot5^{n-1}-6\cdot5^{n-m-2}}
)\\
&=3r\e^{6\cdot5^n}
y^{6\cdot5^{n-1}-6\cdot5^{n-m-2}}
(
(4y+1)^{16\cdot5^{n-1}+4\cdot5^{n-m-2}
+4(r-1)5^n}
\\
&\hskip1.7truein-(4y+1)^{8\cdot5^{n-1}
+2\cdot5^{n-m-2}+2(r-1)5^n}).
\end{split}
\end{align}
In $\mathbb Z/5[y]$,
\begin{align*}
(4y+1)^{16\cdot5^{n-1}+4\cdot5^{n-m-2}
	+4(r-1)5^n}&=
(4y+1)^{(4r-1)5^n+5^{n-1}+
	4\cdot5^{n-m-2}}\\
&=(4y^{5^n}+1)^{4r-1}
(4y^{5^{n-1}}+1)(4y^{5^{n-m-2}}+1)^4\\
&=1+y^{5^{n-m-2}}+y^{2\cdot5^{n-m-2}}
+y^{3\cdot5^{n-m-2}}
+y^{4\cdot5^{n-m-2}}
+O(y^{6\cdot5^{n-m-2}})
\end{align*}
and
\begin{align*}
(4y+1)^{8\cdot5^{n-1}+2\cdot5^{n-m-2}
+2(r-1)5^n}&=
(4y+1)^{(2r-1)5^n+3\cdot5^{n-1}
+2\cdot5^{n-m-2}}\\
&=(4y^{5^n}+1)^{2r-1}
(4y^{5^{n-1}}+1)^3
(4y^{5^{n-m-2}}+1)^2\\
&=1+3y^{5^{n-m-2}}+y^{2\cdot5^{n-m-2}}
+O(y^{6\cdot5^{n-m-2}})
\end{align*}
which, when combined with \eqref{CM1}, yield
\begin{align*}
L_2C_{m,n,r}&=3r\e^{6\cdot5^n}
y^{6\cdot5^{n-1}-6\cdot5^{n-m-2}}
(3y^{5^{n-m-2}}+y^{3\cdot5^{n-m-2}}
+y^{4\cdot5^{n-m-2}}+O(y^{6\cdot5^{n-m-2}}))\\
&=r\e^{6\cdot5^n}(4y^{6\cdot5^{n-1}-5^{n-m-1}}
+3y^{6\cdot5^{n-1}-3\cdot5^{n-m-2}}
+3y^{6\cdot5^{n-1}-2\cdot5^{n-m-2}}
+O(y^{a_n}))
\end{align*}
in $\MTZF$. 

Next, we compute $L_2D_{m,n,r}$
for $1\leq m\leq n-3$.
By Lemma \ref{L2TERM},
\begin{align}\label{DM1}
\begin{split}
L_2D_{m,n,r}&=rL_2(
\D^{8\cdot5^{n-1}
	+\cdot5^{n-m-2}+2(r-1)5^n}
E_4^{6\cdot5^{n-1}-3\cdot5^{n-m-2}}
)\\
&=r\e^{6\cdot5^n}
y^{6\cdot5^{n-1}-3\cdot5^{n-m-2}}
(
(4y+1)^{16\cdot5^{n-1}+2\cdot5^{n-m-2}
+4(r-1)5^n}\\
&\hskip1.7truein
+(4y+1)^{8\cdot5^{n-1}+5^{n-m-2}
+2(r-1)5^n}).
\end{split}
\end{align}
In $\mathbb Z/5[y]$,
\begin{align*}
(4y+1)^{16\cdot5^{n-1}+2\cdot5^{n-m-2}
	+4(r-1)5^n}&=
(4y+1)^{(4r-1)5^n+5^{n-1}
	+2\cdot5^{n-m-2}}\\
&=(4y^{5^n}+1)^{4r-1}(4y^{5^{n-1}}+1)
(4y^{5^{n-m-2}}+1)^2\\
&=1+3y^{5^{n-m-2}}+y^{2\cdot5^{n-m-2}}
+O(y^{3\cdot5^{n-m-2}})
\end{align*}
and	
\begin{align*}
(4y+1)^{8\cdot5^{n-1}+5^{n-m-2}
+2(r-1)5^n}&=
(4y+1)^{(2r-1)5^n+3\cdot5^{n-1}
+5^{n-m-2}}\\
&=(4y^{5^n}+1)^{2r-1}(4y^{5^{n-1}}+1)^3
(4y^{5^{n-m-2}}+1)\\
&=1+4y^{5^{n-m-2}}+O(y^{3\cdot5^{n-m-2}})
\end{align*}
which, when combined with \eqref{DM1}, yield
\begin{align*}
L_2D_{m,n,r}&=r\e^{6\cdot5^n}
y^{6\cdot5^{n-1}-3\cdot5^{n-m-2}}
(2+2y^{5^{n-m-2}}+y^{2\cdot5^{n-m-2}}+O(y^{3\cdot5^{n-m-2}}))\\
&=r\e^{6r\cdot5^n}
(2y^{6\cdot5^{n-1}-3\cdot5^{n-m-2}}
+2y^{6\cdot5^{n-1}-2\cdot5^{n-m-2}}
+y^{6\cdot5^{n-1}-5^{n-m-2}}
+O(y^{a_n}))
\end{align*}
in $\MTZF$.  Note that the assumption $m\leq n-3$ is essential,
because if $m=n-2$,
\[
6\cdot5^{n-1}-5^{n-m-2}=6\cdot5^{n-1}-1=a_n
\]
and so the term $y^{6\cdot5^{n-1}-5^{n-m-2}}
=y^{a_n}$ would
not appear.  This is why the computation
of $L_2D_{n-2,n,r}$ is handled separately in Lemma \ref{CDEXTREME}.
\end{proof}
\begin{thm}\label{divisibility}
The modular forms $f_{r\cdot 5^n/j}$
as identified in Theorem \ref{main_5}
satisfy condition (C4) of Behrens' theorem.
\end{thm}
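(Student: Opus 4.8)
\emph{Proof sketch.} The plan is to reduce condition (C4), via Lemma~\ref{rigidlem}, to the statement that $L_2f_{r\cdot5^n/j}$ is divisible by $E_4^j$ in $\MTZF$, and then, via Proposition~\ref{method}, to the purely polynomial statement that the inhomogeneous polynomial $P(y)$ attached to $L_2f_{r\cdot5^n/j}$ is divisible by $y^j$. Since every $f_{r\cdot5^n/j}$ in Theorem~\ref{main_5} is a $\mathbb Z$-linear combination of $\D^{2r\cdot5^n}$ and the forms $C_{m,n,r}$, $D_{m,n,r}$, whose $L_2$-images are computed in Lemmas~\ref{L2ONDELTA}, \ref{CDEXTREME}, and \ref{CDGENERAL}, the proof amounts to adding these expansions together modulo $5$ and reading off the exponent of $y$ dividing the result; only the congruences $5\equiv0$, $6\equiv1$, and $1+4\equiv0$ modulo $5$ are needed to collapse coefficients.

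First I would dispose of the case $j\leq5^n$, in which $f_{r\cdot5^n/j}=\D^{2r\cdot5^n}$: Lemma~\ref{L2ONDELTA} gives $L_2\D^{2r\cdot5^n}=\e^{6r\cdot5^n}(3ry^{5^n}+O(y^{a_n}))$, so, since $(r,5)=1$, $L_2\D^{2r\cdot5^n}$ is divisible by $E_4^{5^n}$ and hence by $E_4^j$. For $j>5^n$, which forces $r>1$, let $u$ be as in Theorem~\ref{main_5}. The heart of the argument is the telescoping identity, to be proved by induction on $M$ for $0\leq M\leq n-3$:
\[
L_2\Bigl(\D^{2r\cdot5^n}+\sum_{m=0}^{M}(C_{m,n,r}+D_{m,n,r})\Bigr)=r\,\e^{6r\cdot5^n}\bigl(y^{6\cdot5^{n-1}-5^{n-M-2}}+O(y^{a_n})\bigr)\quad\text{in }\MTZF .
\]

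The base case $M=0$ rests on the exceptional expansions of $L_2C_{0,n,r}$ and $L_2D_{0,n,r}$ from Lemma~\ref{CDEXTREME}: the $y^{5^n}$-terms of $L_2\D^{2r\cdot5^n}$ and $L_2C_{0,n,r}$ cancel ($3r+2r\equiv0$), the $y^{27\cdot5^{n-2}}$- and $y^{28\cdot5^{n-2}}$-terms of $L_2(C_{0,n,r}+D_{0,n,r})$ cancel ($4r+r\equiv0$), and what survives at lowest order is $r\,y^{29\cdot5^{n-2}}=r\,y^{6\cdot5^{n-1}-5^{n-2}}$ (coefficient $3r+3r\equiv r$). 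For the inductive step one adds the generic expansions of Lemma~\ref{CDGENERAL}: the term $y^{6\cdot5^{n-1}-5^{n-m-1}}$ of $L_2(C_{m,n,r}+D_{m,n,r})$ has coefficient $4r$, which together with the coefficient $r$ of the same power in the previous partial sum gives $5r\equiv0$; the two intervening terms of $L_2(C_{m,n,r}+D_{m,n,r})$ each have coefficient $3r+2r\equiv0$; and the new lowest-order term $r\,y^{6\cdot5^{n-1}-5^{n-m-2}}$ comes from $D_{m,n,r}$ alone (note $6\cdot5^{n-1}-5^{n-m-2}<a_n$ precisely because $m\leq n-3$).

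With the telescoping identity in hand I would finish by cases, in each comparing the exponent of $y$ dividing $L_2f_{r\cdot5^n/j}$ against the upper endpoint of the relevant $j$-interval in Theorem~\ref{main_5}, using the identities $5^n+5^{n-1}-5^{n-u-1}=6\cdot5^{n-1}-5^{n-u-1}$ and $5^n+5^{n-1}-5^{n-u}+2\cdot5^{n-u-1}=6\cdot5^{n-1}-3\cdot5^{n-u-1}$. In subcase (a) with $1\leq u\leq n-2$, the identity with $M=u-1$ gives divisibility of $L_2f_{r\cdot5^n/j}$ by $y^{6\cdot5^{n-1}-5^{n-u-1}}$, which is exactly the interval's upper endpoint, so $y^j$ divides $L_2f_{r\cdot5^n/j}$. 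In subcase (b) with $2\leq u\leq n-1$, the identity with $M=u-2$ produces a leading term $y^{6\cdot5^{n-1}-5^{n-u}}$ that cancels ($1+4\equiv0$) against the matching term of $L_2C_{u-1,n,r}$ from Lemma~\ref{CDGENERAL}, leaving divisibility by $y^{6\cdot5^{n-1}-3\cdot5^{n-u-1}}$, again the endpoint. Two boundary situations fall outside the telescoping and are handled directly with the exceptional formulas of Lemma~\ref{CDEXTREME}: the case $u=1$ in subcase (b), where $f_{r\cdot5^n/j}=\D^{2r\cdot5^n}+C_{0,n,r}$, the $y^{5^n}$-terms cancel as before, and divisibility by $y^{27\cdot5^{n-2}}$ (the endpoint) remains; and the case $u=n-1$ in subcase (a), where the sum runs to $m=n-2$, so after the telescoping identity for $M=n-3$ one adds $L_2C_{n-2,n,r}$ (Lemma~\ref{CDGENERAL}) and $L_2D_{n-2,n,r}$ (Lemma~\ref{CDEXTREME}), whose combination collapses ($1+4\equiv0$ once more) to $L_2f_{r\cdot5^n/j}=r\,\e^{6r\cdot5^n}O(y^{a_n})$, so that $y^j$ divides $L_2f_{r\cdot5^n/j}$ since $j\leq a_n$. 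The main obstacle is bookkeeping rather than conceptual: keeping the exponents straight as the generic three-term cancellation pattern of Lemma~\ref{CDGENERAL} degenerates at the two ends $m=0$ and $m=n-2$ into the exceptional patterns of Lemma~\ref{CDEXTREME}, and verifying that in every subcase the two identities above pin the $q$-order of $L_2f_{r\cdot5^n/j}$ after cancellation exactly to the endpoint dictated by Theorem~\ref{main_5}.
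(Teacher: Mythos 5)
Your proposal is correct and takes essentially the same route as the paper's proof: the same reduction via Lemma~\ref{rigidlem} and Proposition~\ref{method}, the same inputs from Lemmas~\ref{L2ONDELTA}, \ref{CDEXTREME}, and \ref{CDGENERAL}, and the same mod~$5$ cancellations, with your induction on $M$ being just the paper's telescoped sum \eqref{L2ONCOMBO} written recursively. The case division (including the two boundary situations at $u=1$ and $u=n-1$) and the endpoint checks agree with the paper's argument.
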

\begin{proof}
By Corollary \ref{C1C3}, Proposition \ref{C2ONLY}, 
and Lemma \ref{rigidlem},
it suffices
to show that $L_2f_{r\cdot 5^n/j}$ is divisible by
$E_4^j$ in $M_*(2)_{\mathbb Z/5}$.

Consider first the case $j\leq 5^n$, so that
$f_{r\cdot 5^n/j}=\D^{2r\cdot 5^n}$.
Lemma \ref{L2ONDELTA} implies that
\[
L_2\D^{2r\cdot 5^n}=\e^{6\cdot5^n}(3ry^{5^n}+O(y^{a_n}))
\in\MTZF
\]
showing divisibility by $E_4^{5^n}$
and verifying condition (C4) in this case.

Next, suppose $j>5^n$, and let $u$ be the positive integer
between 1 and $n-1$ such that
\[
5^n+5^{n-1}-5^{n-u}+1\leq j\leq 5^n+5^{n-1}-5^{n-u-1}.
\]   Assume first
that $j>5^n+5^{n-1}-5^{n-u}+2\cdot5^{n-u-1}$,
so that the modular form in question is
\[
f_{r\cdot5^n/j}=\D^{2r\cdot5^n}+
\sum_{m=0}^{u-1}(C_{m,n,r}+D_{m,n,r}).
\]
To show that $L_2f_{r\cdot5^n/j}$
is divisible by $E_4^j$
in $\MTZF$, it suffices to show divisibility
by $E_4^{5^n+5^{n-1}-5^{n-u-1}}$
since $5^n+5^{n-1}-5^{n-u-1}$ is the largest
possible $j$-value in this case.
By Lemmas \ref{L2ONDELTA} and \ref{CDEXTREME}, 
\begin{equation}\label{L2ONSTART}
L_2(\D^{2r\cdot5^n}+C_{0,n,r}+D_{0,n,r})=
\e^{6r\cdot5^n}
(ry^{29\cdot5^{n-2}}+O(y^{a_n}))
\end{equation}
in $\MTZF$.
Lemmas \ref{CDEXTREME} and \ref{CDGENERAL}
imply
\begin{equation}\label{L2ONCDNM2}
L_2(C_{n-2,n,r}+D_{n-2,n,r})=\e^{6r\cdot5^n}
(4ry^{6\cdot5^{n-1}-5}+O(y^{a_n}))\in\MTZF
\end{equation}
and for $1\leq m\leq n-3$,
Lemma \ref{CDGENERAL} implies
\begin{equation}\label{L2ONCD}
L_2(C_{m,n,r}+D_{m,n,r})=\e^{6r\cdot5^n}
(4ry^{6\cdot5^{n-1}-5^{n-m-1}}
+ry^{6\cdot5^{n-1}-5^{n-m-2}}+O(y^{a_n}))
\in\MTZF.
\end{equation}
Therefore, for $1\leq u\leq n-2$,
\begin{align}\label{L2ONCOMBO}
\begin{split}
L_2f_{r\cdot5^n/j}&=\e^{6r\cdot5^n}
\left(
ry^{29\cdot5^{n-2}}+\sum_{m=1}^{u-1}
(4ry^{6\cdot5^{n-1}-5^{n-m-1}}
+ry^{6\cdot5^{n-1}-5^{n-m-2}})+O(y^{a_n})
\right)\\
&=\e^{6r\cdot5^n}(ry^{6\cdot5^{n-1}-5^{n-u-1}}
+O(y^{a_n}))\in\MTZF
\end{split}
\end{align}
by Equations \eqref{L2ONSTART} and \eqref{L2ONCD},
showing divisibility by $E_4^{5^n+5^{n-1}-5^{n-u-1}}$.
For the case $u=n-1$, we obtain
\[
L_2f_{r\cdot5^n/j}=\e^{6r\cdot5^n}
(ry^{6\cdot5^{n-1}-5}
+4ry^{6\cdot5^{n-1}-5}+O(y^{a_n}))
=\e^{6r\cdot5^n}\cdot O(y^{a_n})
\]
from Equations \eqref{L2ONCDNM2} and \eqref{L2ONCOMBO},
showing divisibility
by $E_4^{5^n+5^{n-1}-5^{n-(n-1)-1}}
=E_4^{a_n}$.  Thus, for
$j>5^n+5^{n-1}-5^{n-u}+2\cdot5^{n-u-1}$,
condition (C4) is verified.

Continuing with the case $j>5^n$, assume
now that $j\leq
5^n+5^{n-1}-5^{n-u}+2\cdot5^{n-u-1}$,
so that the modular form in question is
\[
f_{r\cdot5^n/j}=\D^{2r\cdot5^n}+
\sum_{m=0}^{u-2}(C_{m,n,r}+D_{m,n,r})
+C_{u-1,n,r}.
\]
In this case, it suffices to show
divisibility by
$E_4^{5^n+5^{n-1}-5^{n-u}+2\cdot5^{n-u-1}}$
since $5^n+5^{n-1}-5^{n-u}+2\cdot5^{n-u-1}$
is the largest possible value of $j$.
For $u=1$, Lemmas \ref{L2ONDELTA}
and \ref{CDEXTREME} together imply
\begin{align*}
L_2f_{r\cdot5^n/j}&=L_2(\D^{2r\cdot5^n}
+C_{0,n,r})\\
&=\e^{6r\cdot5^n}(3ry^{5^n}+2ry^{5^n}
+4ry^{27\cdot5^{n-2}}
+4ry^{28\cdot5^{n-2}}
+3ry^{29\cdot5^{n-2}}
+O(y^{a_n}))\\
&=\e^{6r\cdot5^n}(
4ry^{27\cdot5^{n-2}}
+4ry^{28\cdot5^{n-2}}
+3ry^{29\cdot5^{n-2}}
+O(y^{a_n}))\in\MTZF
\end{align*}
showing divisibility by
$E_4^{5^n+5^{n-1}-5^{n-1}+2\cdot5^{n-2}}
=E_4^{27\cdot5^{n-2}}$.
For $2\leq u\leq n-1$, we obtain
\begin{align*}
L_2f_{r\cdot5^n/j}&=\e^{6r\cdot5^n}
\bigg(ry^{29\cdot 5^{n-2}}
+\sum_{m=1}^{u-2}(4ry^{6\cdot5^{n-1}-5^{n-m-1}}
+ry^{6\cdot5^{n-1}-5^{n-m-2}})\\
&\hskip1.2truein
+4ry^{6\cdot5^{n-1}-5^{n-u}}
+3ry^{6\cdot5^{n-1}-3\cdot5^{n-u-1}}
+3ry^{6\cdot5^{n-1}-2\cdot5^{n-u-1}}
+O(y^{a_n})
\bigg)\\
&=\e^{6r\cdot5^n}
(3ry^{6\cdot5^{n-1}-3\cdot5^{n-u-1}}
+3ry^{6\cdot5^{n-1}-2\cdot5^{n-u-1}}
+O(y^{a_n}))\in\MTZF
\end{align*}
from
Equations \eqref{L2ONSTART},
\eqref{L2ONCDNM2}, and 
\eqref{L2ONCD},
showing divisibility by
\[
E_4^{5^n+5^{n-1}-5^{n-u}+2\cdot5^{n-u-1}}=
E_4^{6\cdot5^{n-1}-3\cdot5^{n-u-1}}.
\]
Thus, condition (C4) is verified for
this last remaining case. 
\end{proof}

Theorem \ref{main_5} follows from
Corollary \ref{C1C3}, Proposition \ref{C2ONLY},
and Theorem \ref{divisibility}.  

\section{Computations at other primes}
\label{atotherprimes}
\subsection{The prime 7}

In the case $p=7$ we are hunting for
$f_{i/j}\in M_{48i}$ associated to each
order 7 generator $\beta_{i/j}$.
The demand on $q$-order in condition (C2) says
\[
\ord_qf_{i/j}>\dfrac{48i-6j}{12}\quad\text{or}\quad
\ord_qf_{i/j}=\dfrac{48i-6j-2}{12}
\]
which is equivalent to $\ord_qf_{i/j}>4i-\dfrac12j$.
In particular, if $j=1$, the inequality
becomes
\begin{equation}\label{qorder7}
\ord_qf_{i}\geq 4i.
\end{equation}
When combined with parts (a) and (c) of
Proposition \ref{setup}, the inequality
\eqref{qorder7} forces
\[
f_i=c\D^{4i}
\]
for some integer $c$ prime to 7.
Taking $c=1$ and recalling that $\beta_i$
is a 7-primary divided beta family element
for all integers $i\geq1$ (see Example \ref{exmp_i})
yields the following theorem.
\begin{thm}\label{thm7}
For any integer $i\geq1$,
the 7-primary divided beta family element $\beta_i$
has corresponding modular form $f_i=\D^{4i}$.
\end{thm}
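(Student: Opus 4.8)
The strategy is to combine the \emph{existence} assertion of Behrens' theorem with the rigidity of the weight-$48i$ part of $M_*$; no homotopy theory beyond the statement of Behrens' theorem is needed, and conditions (C3) and (C4) need not be checked for $\D^{4i}$ directly. Since $\beta_i=\beta_{i/1,1}$ is a $7$-primary divided beta family element of order $7$ for every $i\geq1$ (Example~\ref{exmp_i}), Behrens' theorem produces a modular form $f_i\in M_{i(7^2-1)}=M_{48i}$ satisfying (C1)--(C4) with $p=7$ and $j=1$. Applying Proposition~\ref{setup}(a), which is available since $48i\equiv0\bmod12$, we may write $f_i=\sum_{m=0}^{4i}c_m\D^{4i-m}E_4^{3m}$ with each $c_m\in\mathbb Z$.

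Next I would feed in condition (C2). As recorded in \eqref{qorder7}, for $p=7$ and $j=1$ condition (C2) is equivalent to $\ord_qf_i\geq4i$ (the ``or equal to'' clause of (C2) is vacuous here, since $48i-8$ is never a multiple of $12$). By (C1) the reduction of $f_i$ mod $7$ is nonzero, so in particular $f_i\neq0$, and Proposition~\ref{setup}(c) then gives $\ord_qf_i=4i-m_0$, where $m_0$ is the largest index with $c_{m_0}\neq0$. The inequality $\ord_qf_i\geq4i$ forces $m_0=0$, so $f_i=c_0\D^{4i}$; and (C1) forces $7\nmid c_0$.

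Finally, I would observe that multiplying a modular form by an integer coprime to $7$ preserves each of (C1)--(C4): (C1) and (C2) are untouched because $q$-order, weight, and non-vanishing mod $7$ are unchanged; for (C3), a congruence $\D^{4i}\equiv g\bmod7$ with $g\in M_t$ and $t<48i$ would yield $c_0\D^{4i}\equiv c_0g\bmod7$ with $c_0g\in M_t$, and conversely; and (C4) is immediate from the $\mathbb Z/7$-linearity of $L_\ell$. Since $c_0\D^{4i}=f_i$ satisfies (C1)--(C4), so does $\D^{4i}$, which---by the convention on the meaning of ``$f_{i/j}=f$'' set up in the introduction---is exactly the claim $f_i=\D^{4i}$.

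There is no real obstacle: the content is carried by Behrens' theorem and by the fact that the $q$-order constraint of (C2) cuts the weight-$48i$ part of $M_*$ down to the line $\{c_0\D^{4i}\}$. The only items needing (routine) verification are the reduction of (C2) to $\ord_qf_i\geq4i$---already done in the discussion preceding the theorem---and the invariance of (C3) and (C4) under scaling by a unit mod $7$, both of which are immediate.
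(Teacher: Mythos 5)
Your proposal is correct and follows essentially the same route as the paper: invoke the existence of $f_i$ from Behrens' theorem, use the $j=1$, $p=7$ form of condition (C2) together with Proposition \ref{setup}(a) and (c) to force $f_i=c_0\D^{4i}$ with $7\nmid c_0$, and then normalize $c_0$ to $1$. Your explicit check that scaling by a unit mod $7$ preserves (C1)--(C4) is a slightly more careful rendering of the normalization step that the paper delegates to its introductory remark on the non-uniqueness of the $f_{i/j,k}$.
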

\begin{rem}
Proposition \ref{rigidity} does not apply
in the case $\ell_0=2$ and $p=7$
since 2 is not a topological generator
of $\mathbb Z_7^{\times}$.  Further computations at
the prime 7 could be done by working with
$\Gamma_0(3)$ modular forms instead,
since $\ell=3$
{\em is} a topological generator of $\mathbb Z_7^{\times}$.
\end{rem}

\subsection{The prime 11}\label{theprime11} We shall compute
$f_{i/j}\in M_{120i}$ at the prime 11 in
the case $1\leq j\leq 11^n$, where $i=r\cdot11^n$
with $(r,11)=1$.  Since 2 is a
topological generator of $\mathbb Z_{11}^{\times}$,
Proposition \ref{rigidity} applies, and so it suffices to compute
with $\Gamma_0(2)$ modular forms.  In $M_*(2)[1/2]$,
\begin{align*}
E_{10}&=-32768\delta\mu^2-4096\delta\mu\e+1024\delta\e^2,\\
V_2E_{10}&=-32\delta\mu^2+128\delta\mu\e+1024\e^2
\end{align*}
and so in $M_*(2)_{\mathbb Z/11}$,
\begin{equation}\label{E10at11}
E_{10}=V_2E_{10}=\delta\mu^2+7\delta\mu\e+\delta\e^2
=\delta(\mu+3\e)(\mu+4\e).
\end{equation}
Equations \eqref{deltaidentity} and \eqref{vdeltaidentity}
imply the identities
\begin{align}
\D&=9\mu\e^2,\label{deltaat11}\\
V_2\D&=\mu^2\e\label{vdeltaat11}
\end{align}
in $M_*(2)_{\mathbb Z/11}$.
\begin{prop}\label{prop11}
If $f\in M_t$, then $L_2f\in M_t(2)_{\mathbb Z/11}$
is expressible as a homogeneous element of
$\mathbb Z/11[\mu,\e]$ of degree $t/4$, and
if $x=\mu/\e$, then
\begin{equation}\label{lemma11equation}
L_2f=\e^{t/4}P(x)
\end{equation}
for an inhomogeneous polynomial $P(x)\in\mathbb Z/11[x]$.
Moreover, if $P(x)$ is divisible by
$(x+1)^j(x+3)^j(x+4)^j$ in $\mathbb Z/11[x]$,
then $L_2f$ is divisible by $E_{10}^j$ in
$M_*(2)_{\mathbb Z/11}$.
\end{prop}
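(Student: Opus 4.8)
The plan is to mimic the proof of Proposition \ref{method}, using the prime 11 analogues \eqref{deltaat11} and \eqref{vdeltaat11} of the $\D$ and $V_2\D$ identities, and then to unwind the factorization of $E_{10}$ recorded in \eqref{E10at11} at the end.

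\emph{The homogeneity statement.} By Proposition \ref{basis}, $f\in M_t$ is a $\mathbb Z$-linear combination of monomials $\D^aE_4^b$ with $12a+4b=t$. Applying $L_2=V_2-\iota_2$, using that $V_2$ is a ring homomorphism, and substituting the expressions for $\D,V_2\D$ from \eqref{deltaat11}, \eqref{vdeltaat11} and for $E_4,V_2E_4$ from \eqref{E4identity}, \eqref{vE4identity} (which, being identities in $M_*(2)[1/2]$, reduce modulo 11), one obtains
\[
L_2(\D^aE_4^b)=(\mu^2\e)^a(V_2E_4)^b-(9\mu\e^2)^aE_4^b,
\]
a homogeneous polynomial in $\mu$ and $\e$ over $\mathbb Z/11$ of degree $3a+b=t/4$. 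By linearity of $L_2$ the same holds for $L_2f$. Putting $x=\mu/\e$ and factoring out $\e^{t/4}$ gives $L_2f=\e^{t/4}P(x)$ with $P(x)\in\mathbb Z/11[x]$ of degree at most $t/4$, in general inhomogeneous, exactly as in Proposition \ref{method}.

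\emph{The divisibility criterion.} Suppose $P(x)=(x+1)^j(x+3)^j(x+4)^jQ(x)$ in $\mathbb Z/11[x]$. If $P=0$ there is nothing to show, so assume $\deg Q\leq t/4-3j$; then $\tilde Q(\mu,\e):=\e^{t/4-3j}Q(\mu/\e)$ is an honest homogeneous polynomial in $\mu,\e$ of degree $t/4-3j$. Using $(\mu/\e+c)^j=\e^{-j}(\mu+c\e)^j$ we get
\[
L_2f=\e^{t/4}P(\mu/\e)=(\mu+\e)^j(\mu+3\e)^j(\mu+4\e)^j\,\tilde Q(\mu,\e).
\]
Now $\mu=\delta^2-\e$ forces $\mu+\e=\delta^2$, hence $(\mu+\e)^j=\delta^{2j}$, while \eqref{E10at11} gives $E_{10}^j=\delta^j(\mu+3\e)^j(\mu+4\e)^j$. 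Combining,
\[
(\mu+\e)^j(\mu+3\e)^j(\mu+4\e)^j=\delta^{2j}(\mu+3\e)^j(\mu+4\e)^j=\delta^jE_{10}^j,
\]
so $L_2f=\delta^jE_{10}^j\,\tilde Q(\mu,\e)$. Since $\delta^j\tilde Q(\mu,\e)$ lies in $M_*(2)_{\mathbb Z/11}$ (recall $\mu=\delta^2-\e$), this exhibits $L_2f$ as divisible by $E_{10}^j$, as claimed.

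\emph{Expected obstacle.} The only step that is not routine bookkeeping carried over from the $p=5$ case is seeing how the three linear factors $(x+1)$, $(x+3)$, $(x+4)$ should be read: $(\mu+3\e)$ and $(\mu+4\e)$ come from the factorization of $E_{10}$ itself, while the extra factor $(\mu+\e)=\delta^2$ supplies precisely the $\delta^j$ needed to complete $(\mu+3\e)^j(\mu+4\e)^j$ to $E_{10}^j$, leaving a quotient $\delta^j\tilde Q$ that still lies in $M_*(2)_{\mathbb Z/11}$. Because $\delta$ genuinely occurs here — in contrast to Proposition \ref{method}, where $E_4$ was a unit multiple of the single linear form $4\mu+\e$ — this argument only yields a sufficient condition, not an equivalence, which is why the statement is phrased that way.
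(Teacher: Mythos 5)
Your proof is correct and follows essentially the same route as the paper: the homogeneity statement is carried over from Proposition \ref{method} verbatim, and the divisibility step rests on exactly the identity $(\mu+\e)^j(\mu+3\e)^j(\mu+4\e)^j=\delta^{j}E_{10}^j$ coming from $\mu+\e=\delta^2$ and Equation \eqref{E10at11}, which is precisely the paper's argument. Your closing remark about why the criterion is only sufficient (the residual factor $\delta^j\tilde Q$) is a reasonable gloss but does not change the approach.
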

\begin{proof}
This proposition is analogous to Proposition \ref{method},
and the proof of \eqref{lemma11equation} is similar
to the proof of \eqref{inhomog}.

If $P(x)$ is divisible by
$(x+1)^j(x+3)^j(x+4)^j$ in $\mathbb Z/11[x]$,
then the corresponding homogeneous element
of $\mathbb Z/11[\mu,\e]$ is divisible
by $(\mu+\e)^j(\mu+3\e)^j(\mu+4\e)^j$.
This in turn implies divisibility by $E_{10}^j$
since
\[
(\mu+\e)^j(\mu+3\e)^j(\mu+4\e)^j=\delta^{j}E_{10}^j
\]
by Equation \eqref{E10at11} and the identity
$\delta^2=\mu+\e$.
\end{proof}
\begin{thm}\label{thm11}
Given an 11-primary divided beta family element
$\beta_{r\cdot11^n/j}$ with $(r,11)=1$ and $1\leq j\leq11^n$,
the corresponding modular form is
$f_{r\cdot11^n/j}=\D^{10r\cdot11^n}\in M_{120r\cdot11^n}$.
\end{thm}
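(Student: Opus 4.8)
The plan is to check that $f=\D^{10r\cdot11^n}$ satisfies conditions (C1)--(C4) of Behrens' theorem with $p=11$, $i=r\cdot11^n$, for every $j$ with $1\le j\le11^n$; since uniqueness is not claimed, exhibiting this one candidate suffices. The weight is correct: $12\cdot10r\cdot11^n=120r\cdot11^n=i(p^2-1)$. Conditions (C1), (C2), (C3) go exactly as in Section~\ref{at5}. Since $\D^{10r\cdot11^n}$ is a single basis monomial with leading coefficient $1$, and Proposition~\ref{basis} remains a basis statement after reduction mod $11$, the reasoning of Proposition~\ref{setup} gives $\D^{10r\cdot11^n}(q)\not\equiv0\bmod11$ (hence (C1)), $\ord_q\D^{10r\cdot11^n}=10r\cdot11^n$, and non-divisibility of $\D^{10r\cdot11^n}$ by $E_4$ in $(M_*)_{\mathbb Z/11}$; as $M_{10}$ has rank one over $\mathbb Z$ we have $E_{10}=E_4E_6$, so $\D^{10r\cdot11^n}$ is not divisible by $E_{10}$ either, and (C3) follows from Proposition~\ref{Serre}. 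For (C2), $12\cdot\ord_q\D^{10r\cdot11^n}=120i$, which exceeds $120i-10j$ for every $j\ge1$, so the first alternative of (C2) is satisfied.

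The substance is (C4). First I would apply Proposition~\ref{rigidity}: since $2$ is a topological generator of $\mathbb Z_{11}^{\times}$ and $f$ has weight divisible by $10$, it suffices to verify (C4) for $\ell=2$. As in the proof of Lemma~\ref{rigidlem}, Proposition~\ref{Serre} then reduces this to showing that $L_2\D^{10r\cdot11^n}$ is divisible by $E_{10}^{j}$ in $M_*(2)_{\mathbb Z/11}$, which is a genuine divisibility question because $M_*(2)_{\mathbb Z/11}=\mathbb Z/11[\delta,\e]$ is a domain. By Proposition~\ref{prop11} it is enough to write $L_2\D^{10r\cdot11^n}=\e^{30r\cdot11^n}P(x)$ with $x=\mu/\e$ and to check that $P(x)\in\mathbb Z/11[x]$ is divisible by $(x+1)^j(x+3)^j(x+4)^j$.

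To do this, set $N=10r\cdot11^n$ and use Equations~\eqref{deltaat11} and \eqref{vdeltaat11}:
\[
L_2\D^{N}=V_2\D^{N}-\D^{N}=(\mu^{2}\e)^{N}-(9\mu\e^{2})^{N}=\mu^{N}\e^{N}\bigl(\mu^{N}-9^{N}\e^{N}\bigr).
\]
Because $9\equiv-2\bmod11$, $(-2)^{5}\equiv1\bmod11$, and $5\mid N$, we have $9^{N}\equiv1$, so $L_2\D^{N}=\mu^{N}\e^{N}(\mu^{N}-\e^{N})=\e^{3N}x^{N}(x^{N}-1)$ and hence $P(x)=x^{N}(x^{N}-1)$. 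Since $10\cdot11^{n}\mid N$ we have $x^{10\cdot11^{n}}-1\mid x^{N}-1$, and in $\mathbb Z/11[x]$ Fermat's little theorem together with the Frobenius gives
\[
x^{10\cdot11^{n}}-1=\prod_{a=1}^{10}\bigl(x^{11^{n}}-a\bigr)=\prod_{a=1}^{10}(x-a)^{11^{n}}.
\]
In particular $(x+1)^{11^{n}}(x+3)^{11^{n}}(x+4)^{11^{n}}=(x-10)^{11^{n}}(x-8)^{11^{n}}(x-7)^{11^{n}}$ divides $x^{N}-1$, hence divides $P(x)$; as $j\le11^{n}$, $P(x)$ is divisible by $(x+1)^j(x+3)^j(x+4)^j$, so $L_2\D^{N}$ is divisible by $E_{10}^{j}$ and (C4) holds. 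Combining this with (C1)--(C3) proves the theorem.

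I do not expect a serious obstacle here: once the reductions supplied by Propositions~\ref{rigidity}, \ref{Serre}, and \ref{prop11} are assembled, all of (C4) collapses to the elementary factorization of $x^{N}(x^{N}-1)$ over $\mathbb Z/11$, and the hypothesis $1\le j\le11^{n}$ is exactly what keeps $(x+1)^j(x+3)^j(x+4)^j$ inside the factor $\prod_{a=1}^{10}(x-a)^{11^{n}}$ of $x^{N}-1$. This is the $p=11$ analogue of the easy regime $j\le5^{n}$ of Theorem~\ref{main_5}, where $f_{i/j}$ is a pure power of $\D$; the only steps needing a line of justification are the congruence $9^{N}\equiv1\bmod11$ and the identity $E_{10}=E_4E_6$.
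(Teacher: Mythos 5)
Your proposal is correct and follows essentially the same route as the paper: reduce (C4) to $\ell=2$ via Proposition \ref{rigidity}, use Proposition \ref{prop11} to turn $E_{10}$-divisibility into divisibility of $P(x)=x^N(x^N-1)$ by $(x+1)^j(x+3)^j(x+4)^j$, and extract the factor $\prod_{a=1}^{10}(x-a)^{11^n}$ of $x^N-1$ via Fermat and Frobenius. The only differences are cosmetic (you apply the Frobenius to the linear factors of $x^{10\cdot 11^n}-1$ rather than to $x^{10r}-1$ as a whole), so no further comment is needed.
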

\begin{proof}
The modular form $\D^{10r\cdot11^n}$
has nonzero Fourier expansion modulo 11, and
its $q$-order is
\[
\ord_q\D^{10r\cdot11^n}=10r\cdot11^n
>\dfrac{120r\cdot11^n-10j}{12}
\]
for $j\geq1$.  Any power of $\D=E_4^3+10E_6^2$
is not divisible by $E_{10}=E_4E_6$
in $(M_*)_{\mathbb Z/11}$.  Thus, conditions (C1), (C2),
and (C3) are satisfied.

Using the notation from Proposition \ref{prop11}, Equations 
\eqref{deltaat11} and
\eqref{vdeltaat11} imply
\begin{align}\label{11calculation}
\begin{split}
L_2\D^{10r\cdot11^n}&=V_2\D^{10r\cdot11^n}
-\D^{10r\cdot11^n}\\
&=(\mu^{20r}\e^{10r}-9^{10r}\mu^{10r}\e^{10r})^{11^n}\\
&=\e^{30r\cdot11^n}x^{10r\cdot11^n}(x^{10r}-1)^{11^n}\\
&=\e^{30r\cdot11^n}x^{10r\cdot11^n}(x^{10}-1)^{11^n}
(x^{10(r-1)}+x^{10(r-2)}+\cdots+x^{10}+1)^{11^n}.
\end{split}
\end{align}
Moreover, in $\mathbb Z/11[x]$,
\[
x^{10}-1=(x+1)(x+2)(x+3)(x+4)(x+5)(x+6)(x+7)(x+8)(x+9)(x+10)
\]
and so Equation \eqref{11calculation} has the form
$L_2\D^{10r\cdot11^n}=\e^{30r\cdot11^n}P(x)$
where $P(x)$ is divisible by
\[
(x+1)^{11^n}(x+3)^{11^n}(x+4)^{11^n}.
\]
By Proposition \ref{prop11}, this implies
$L_2\D^{10r\cdot11^n}$ is divisible by $E_{10}^{11^n}$
(and hence by $E_{10}^j$ for $1\leq j\leq11^n$)
in $M_*(2)_{\mathbb Z/11}$.
This verifies condition (C4) in the case $\ell=2$ by
Proposition \ref{Serre}.
\end{proof}

\subsection{The primes 13 and 677}
We shall establish analogs of Theorem \ref{thm11}
at the primes 13 and 677.  The proofs will follow
the structure of Subsection \ref{theprime11}
but will be made more concise.
Since 2 is a topological generator of
$\mathbb Z_{13}^{\times}$ and $\mathbb Z_{677}^{\times}$,
it suffices in both cases to compute with $\Gamma_0(2)$
modular forms when checking condition (C4).
\begin{thm}\label{thm13}
Given a 13-primary divided beta family element
$\beta_{r\cdot13^n/j}$ with $(r,13)=1$
and $1\leq j\leq13^n$, the corresponding modular
form is $f_{r\cdot13^n/j}=\D^{14r\cdot13^n}\in
M_{168r\cdot13^n}$.
\end{thm}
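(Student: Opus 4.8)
The plan is to follow the template of Subsection~\ref{theprime11}, so I will be brief. The candidate is $f_{r\cdot13^n/j}=\D^{14r\cdot13^n}\in M_{168r\cdot13^n}$ (note $14\cdot12=168=13^2-1$), and the task is to verify conditions (C1)--(C4) for it. Conditions (C1)--(C3) go exactly as in the proof of Theorem~\ref{thm11}: (C1) holds since $\D^{14r\cdot13^n}(q)=q^{14r\cdot13^n}+\cdots$ is not $\equiv0\bmod13$; for (C2), note that $168i-12j-2\equiv-2\bmod12$ is never divisible by $12$, so (C2) reduces to $\ord_q\D^{14i}=14i>(168i-12j)/12$, which holds for every $j\ge1$; and (C3) holds because no positive power of $\D$ is divisible by $E_{12}$ in $(M_*)_{\mathbb Z/13}$ --- for instance because $\D$ has no zeros away from the cusp while $E_{12}$, reducing to the Hasse invariant mod $13$, vanishes at the supersingular points --- so (C3) follows from Proposition~\ref{Serre}.

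The content is in condition (C4). Since $2$ is a topological generator of $\mathbb Z_{13}^\times$, Propositions~\ref{rigidity} and~\ref{Serre} reduce (C4) to showing that $L_2\D^{14r\cdot13^n}$ is divisible by $E_{12}^j$ in $M_*(2)_{\mathbb Z/13}$ for all $1\le j\le13^n$. The first step is a direct computation in $M_*(2)[1/2]$: starting from the classical identity $E_{12}=E_4^3-(432000/691)\D$ and substituting \eqref{E4identity} and \eqref{deltaidentity} expresses $E_{12}$ in terms of $\mu$ and $\e$, and reducing mod $13$ (where $64\equiv-1$, $16\equiv3$, and $432000/691\equiv5$) gives
\[
E_{12}=12\mu^3+9\mu^2\e+4\mu\e^2+\e^3=4(\mu-\e)(3\mu^2+2\mu\e+3\e^2)\quad\text{in }M_*(2)_{\mathbb Z/13},
\]
where $3\mu^2+2\mu\e+3\e^2$ is irreducible over $\mathbb Z/13$ (its discriminant $-32\equiv7$ is a nonsquare mod $13$); one also notes $V_2E_{12}=E_{12}$, automatic since both reduce to the constant $q$-series $1$. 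As in Proposition~\ref{prop11}, writing $L_2f=\e^{t/4}P(x)$ with $x=\mu/\e$ for $f\in M_t$, the form $L_2f$ is divisible by $E_{12}^j$ in $M_*(2)_{\mathbb Z/13}$ whenever $P(x)$ is divisible by $(x-1)^j(3x^2+2x+3)^j$ in $\mathbb Z/13[x]$.

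The remaining computation parallels \eqref{11calculation}. By \eqref{deltaidentity} and \eqref{vdeltaidentity} mod $13$ we have $\D=12\mu\e^2$ and $V_2\D=\mu^2\e$, so, since $14r\cdot13^n$ is even (hence $12^{14r\cdot13^n}\equiv1$),
\[
L_2\D^{14r\cdot13^n}=(\mu^2\e)^{14r\cdot13^n}-(\mu\e^2)^{14r\cdot13^n}=\mu^{14r\cdot13^n}\e^{14r\cdot13^n}\bigl(\mu^{14r}-\e^{14r}\bigr)^{13^n}
\]
in $M_*(2)_{\mathbb Z/13}$, the last equality by the Frobenius identity $(a-b)^{13^n}=a^{13^n}-b^{13^n}$. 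Since $\mu^{14}-\e^{14}$ divides $\mu^{14r}-\e^{14r}$, it now suffices to show $E_{12}$ divides $\mu^{14}-\e^{14}$ in the polynomial ring $\mathbb Z/13[\mu,\e]$: the linear factor $\mu-\e$ obviously divides, and the quadratic factor $3\mu^2+2\mu\e+3\e^2$ divides because its two (Frobenius-conjugate) roots $\alpha,\alpha^{13}\in\mathbb F_{169}$ have product $3/3=1$, so $\alpha^{14}=\alpha\cdot\alpha^{13}=1$ and both roots are $14$th roots of unity. These two factors being coprime in the UFD $\mathbb Z/13[\mu,\e]$, their product divides $\mu^{14}-\e^{14}$; hence $E_{12}^{13^n}$ divides $\bigl(\mu^{14r}-\e^{14r}\bigr)^{13^n}$, and so $E_{12}^j$ divides $L_2\D^{14r\cdot13^n}$ for $1\le j\le13^n$. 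This verifies (C4), completing the proof.

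The only step needing real care is the first part of condition (C4): identifying $E_{12}$ exactly in $M_*(2)[1/2]$, and then computing its reduction and factorization mod $13$. Everything downstream is formal once one sees the structural point --- the $p=13$ analog of the complete splitting of $x^{10}-1$ over $\mathbb Z/11$ used in Subsection~\ref{theprime11} --- that, in the coordinate $x=\mu/\e$, every irreducible factor of $E_{12}\bmod13$ has its roots among the $14$th roots of unity, and is therefore absorbed by $\mu^{14}-\e^{14}$.
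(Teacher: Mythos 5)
Your proof is correct and follows essentially the same route as the paper: verify (C1)--(C3) directly, reduce (C4) to $\ell=2$ via Propositions~\ref{rigidity} and~\ref{Serre}, express $E_{12}$ and $L_2\D^{14r\cdot13^n}$ in $\mu,\e$ coordinates (your factorization $4(\mu-\e)(3\mu^2+2\mu\e+3\e^2)$ agrees with the paper's $12(\mu+12\e)(\mu^2+5\mu\e+\e^2)$ up to units), and show each irreducible factor divides $\mu^{14}-\e^{14}$. The only difference is cosmetic: where the paper exhibits the full factorization of $x^{14}-1$ over $\mathbb Z/13$ and locates the relevant factors in it, you argue via Frobenius conjugacy and the norm condition that the roots of the quadratic are $14$th roots of unity --- a slightly more conceptual justification of the same divisibility.
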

\begin{proof}
The modular form $\D^{14r\cdot13^n}$
has nonzero Fourier expansion modulo 11,
and its $q$-order is
\[
\ord_q\D^{14r\cdot13^n}=14r\cdot13^n>\dfrac{168r\cdot13^n-12j}{12}
\]
for $j\geq1$.  Any power of $\D=12E_4^3+E_6^2$
is not divisible by $E_{12}=6E_4^3+8E_6^2$
in $(M_*)_{\mathbb Z/13}$.
Thus, conditions (C1), (C2),
and (C3) are satisfied.

In $M_*(2)_{\mathbb Z/13}$,
\begin{align}
\D&=12\mu\e^2,\label{deltaat13}\\
V_2\D&=\mu^2\e,\label{vdeltaat13}
\end{align}
and
\begin{equation}\label{E12at13}
E_{12}=V_2E_{12}=12\mu^3+9\mu^2\e+4\mu\e^2+\e^3
=12(\mu+12\e)(\mu^2+5\mu\e+\e^2).
\end{equation}
Using the notation from Proposition \ref{prop11}, Equations \eqref{deltaat13} and
\eqref{vdeltaat13} imply
\begin{align}\label{13calculation}
\begin{split}
L_2\D^{14r\cdot13^n}&=V_2\D^{14r\cdot13^n}
-\D^{14r\cdot13^n}\\
&=(\mu^{28r}\e^{14r}-\mu^{14r}\e^{28r})^{13^n}\\
&=\e^{42r\cdot13^n}x^{14r\cdot13^n}(x^{14r}-1)^{13^n}\\
&=\e^{42r\cdot13^n}x^{14r\cdot13^n}(x^{14}-1)^{13^n}
(x^{14(r-1)}+x^{14(r-2)}+\cdots+x^{14}+1)^{13n}.
\end{split}
\end{align}
Moreover, in $\mathbb Z/13[x]$,
\[
x^{14}-1=(x+1)(x+12)
(x^2+3x+1)(x^2+5x+1)(x^2+6x+1)(x^2+7x+1)(x^2+8x+1)
(x^2+10x+1)
\]
and so Equation \eqref{13calculation}
has the form $L_2\D^{14r\cdot13^n}
=\e^{42r\cdot13^n}P(x)$ where $P(x)$
is divisible by
\[
(x+12)^{13^n}(x^2+5x+1)^{13^n}.
\]
Using Equation \eqref{E12at13}
and arguing as in the proof of Proposition
\ref{prop11} shows this is equivalent to
$L_2\D^{14r\cdot13^n}$ being divisible by
$E_{12}^{13^n}$ (and hence by $E_{12}^j$
for $1\leq j\leq13^n$)
in $M_*(2)_{\mathbb Z/13}$.
This verifies condition (C4) in the case $\ell=2$ by
Proposition \ref{Serre}.
\end{proof}
\begin{thm}\label{thm677}
Given a 677-primary divided beta family element
$\beta_{r\cdot677^n/j}$ with $(r,677)=1$
and $1\leq j\leq677^n$, the corresponding modular
form is $f_{r\cdot677^n/j}=\D^{38194r\cdot677^n}\in
M_{458328r\cdot677^n}$.
\end{thm}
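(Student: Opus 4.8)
The plan is to reproduce, for $p=677$, the three-step argument behind Theorems \ref{thm11} and \ref{thm13}. Here $p-1=676$ and $p^2-1=458328=12\cdot 38194$, so $\beta_{r\cdot 677^n/j}$ (with $(r,677)=1$ and $1\le j\le 677^n$) has associated form of weight $458328r\cdot 677^n$, and the candidate is $f=\D^{N}\in M_{458328r\cdot 677^n}$ with $N=38194r\cdot 677^n$. Conditions (C1)--(C3) go through uniformly, just as for the smaller primes. Condition (C1) holds since $\D^{N}=q^{N}+\cdots$ is nonzero mod $677$. For (C2), $\ord_q\D^{N}=N$ and $12N=458328r\cdot 677^n>458328r\cdot 677^n-676j$ for all $j\ge1$, while the alternative $12\cdot\ord_qf=458328i-676j-2$ is impossible because $458328i-676j-2\equiv-4j-2\not\equiv0\pmod{12}$. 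For (C3), if $\D^{N}=E_{676}g$ in $(M_*)_{\mathbb Z/677}$ then $E_{676}(q)\equiv1\pmod{677}$ forces $g(q)=\D^{N}(q)$, so $g$ has $q$-order $N$ but weight $12N-676$, contradicting the fact (Proposition \ref{basis}) that a nonzero weight-$t$ form has $q$-order at most $\lfloor t/12\rfloor<N$; Proposition \ref{Serre} then forbids any congruence of $\D^{N}$ to a form of smaller weight.

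For condition (C4): since $2$ is a topological generator of $\mathbb Z_{677}^{\times}$, Proposition \ref{rigidity} reduces (C4) to $\ell=2$, and by Proposition \ref{Serre} it suffices to prove that $L_2\D^{N}$ is divisible by $E_{676}^{677^n}$ in $M_*(2)_{\mathbb Z/677}$, which handles every $j$ with $1\le j\le 677^n$. Reducing \eqref{deltaidentity} and \eqref{vdeltaidentity} mod $677$ gives $\D=64\mu\e^{2}$ and $V_2\D=\mu^{2}\e$ in $M_*(2)_{\mathbb Z/677}$; moreover the order of $2$ modulo $677$ is $676$, so $64=2^6$ has order $338$ and $64^{38194}\equiv1\pmod{677}$ because $338\mid 38194=338\cdot113$. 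Exactly as in \eqref{11calculation}, with $x=\mu/\e$ one gets
\[
L_2\D^{N}=\e^{3N}\,x^{N}\bigl(x^{38194}-1\bigr)^{677^n}\bigl(x^{38194(r-1)}+\cdots+x^{38194}+1\bigr)^{677^n}
\]
in $M_*(2)_{\mathbb Z/677}$. Since $x^{38194}-1$ is separable over $\mathbb Z/677$ and $38194\mid 677^2-1$, it splits over $\mathbb Z/677$ into distinct linear and quadratic factors, its roots being the $38194$-th roots of unity in $\mathbb F_{677^2}^{\times}$.

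It remains to locate $E_{676}$ inside $M_*(2)_{\mathbb Z/677}$ and to check that its factors are among those of $x^{38194}-1$. In parallel with \eqref{E10at11} and \eqref{E12at13}, I would compute $E_{676}$ in $\mathbb Z/677[\mu,\e]$ (observing along the way that $V_2E_{676}=E_{676}$, since $E_{676}(q)\equiv1$), factor the resulting degree-$169$ homogeneous polynomial, and write it as $\delta^{s}$ times a product of linear and quadratic forms in $\mu$ and $\e$; via $\delta^2=\mu+\e$, each such factor dehomogenizes in the variable $x=\mu/\e$ to a factor of $x^{38194}-1$ over $\mathbb Z/677$. Granting this, the factor $(x^{38194}-1)^{677^n}$ above makes $L_2\D^{N}$ divisible, in $M_*(2)_{\mathbb Z/677}$, by a power of that product containing $E_{676}^{677^n}$, which completes (C4) by the argument behind Proposition \ref{prop11}. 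Theorem \ref{thm677} then follows.

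The main obstacle is this last step. Unlike the cases $p=11,13$, the weight-$676$ Eisenstein series is not a short polynomial in $E_4$ and $E_6$, so writing $E_{676}$ in $M_*(2)_{\mathbb Z/677}$ and verifying that each of its $(\mu,\e)$-factors divides $x^{38194}-1$ mod $677$ is a genuine, albeit finite, computation. Conceptually one expects success because the vanishing locus of $E_{676}$ mod $677$ is the supersingular locus, which is $\mathbb F_{677^2}$-rational, while $x^{38194}-1$ splits completely over $\mathbb F_{677^2}$ since $38194=(677^2-1)/12$; in these coordinates the assertion amounts to each supersingular value of $x=\mu/\e$ being a $12$th power in $\mathbb F_{677^2}^{\times}$. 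I would verify this by computer, exactly as the smaller primes were treated, expecting no new phenomenon to arise.
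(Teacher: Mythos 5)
Your proposal follows essentially the same route as the paper: verify (C1)--(C3) directly, reduce (C4) to $\ell=2$ and $j=677^n$ via Propositions \ref{rigidity} and \ref{Serre}, express $L_2\D^{N}$ as $\e^{3N}x^{N}(x^{38194}-1)^{677^n}(\cdots)^{677^n}$ (correctly noting $64^{38194}\equiv1$), and check that the degree-$169$ polynomial in $x$ corresponding to $E_{676}$ in $M_*(2)_{\mathbb Z/677}$ divides $x^{38194}-1$. The paper likewise treats that last divisibility as an explicit finite computation (``one can check,'' after displaying $E_{676}$ in $\mu$ and $\e$), so your deferral of it to a computer verification matches the paper's level of rigor.
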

\begin{proof}
The modular form $\D^{38194r\cdot677^n}$
has nonzero Fourier expansion modulo 677,
and its $q$-order is
\[
\ord_q\D^{38194r\cdot677^n}=38194r\cdot677^n
>\dfrac{458328r\cdot677^n-676j}{12}
\]
for $j\geq1$.  Any power of $\D$ 
is not divisible by
\[
E_{676}=66\D^{56}E_4+654\D^{55}E_4^4+\cdots+60\D E_4^{166}
+E_4^{169}
\]
in $(M_*)_{\mathbb Z/677}$.
Thus, conditions (C1), (C2),
and (C3) are satisfied.

In $M_*(2)_{\mathbb Z/677}$,
\begin{align}
	\D&=64\mu\e^2,\label{deltaat677}\\
	V_2\D&=\mu^2\e,\label{vdeltaat677}
\end{align}
and
\begin{align}\label{E676at677}
\begin{split}
&E_{676}=V_2E_{676}\\
&=676\mu^{169}+127\mu^{168}\e+236\mu^{167}\e^{2}
+375\mu^{166}\e^{3}+522\mu^{165}\e^{4}+222\mu^{164}\e^{5}
+232\mu^{163}\e^{6}+195\mu^{162}\e^{7}\\
&+220\mu^{161}\e^{8}+22\mu^{160}\e^{9}+461\mu^{159}\e^{10}
+582\mu^{158}\e^{11}+541\mu^{157}\e^{12}+283\mu^{156}\e^{13}
+577\mu^{155}\e^{14}\\
&+598\mu^{154}\e^{15}+263\mu^{153}\e^{16}+361\mu^{152}\e^{17}
+577\mu^{151}\e^{18}+540\mu^{150}\e^{19}+90\mu^{149}\e^{20}
+222\mu^{148}\e^{21}\\
&+248\mu^{147}\e^{22}+164\mu^{146}\e^{23}+494\mu^{145}\e^{24}
+361\mu^{144}\e^{25}+107\mu^{143}\e^{26}+404\mu^{142}\e^{27}
+469\mu^{141}\e^{28}\\
&+265\mu^{140}\e^{29}+21\mu^{139}\e^{30}+4\mu^{138}\e^{31}
+317\mu^{137}\e^{32}+369\mu^{136}\e^{33}+189\mu^{135}\e^{34}
+283\mu^{134}\e^{35}\\
&+490\mu^{133}\e^{36}+543\mu^{132}\e^{37}+81\mu^{131}\e^{38}
+372\mu^{130}\e^{39}+302\mu^{129}\e^{40}+401\mu^{128}\e^{41}
+293\mu^{127}\e^{42}\\
&+199\mu^{126}\e^{43}+532\mu^{125}\e^{44}+49\mu^{124}\e^{45}
+431\mu^{123}\e^{46}+127\mu^{122}\e^{47}+208\mu^{121}\e^{48}
+596\mu^{120}\e^{49}\\
&+277\mu^{119}\e^{50}+222\mu^{118}\e^{51}+325\mu^{117}\e^{52}
+97\mu^{116}\e^{53}+599\mu^{115}\e^{54}+576\mu^{114}\e^{55}
+169\mu^{113}\e^{56}\\
&+152\mu^{112}\e^{57}+528\mu^{111}\e^{58}+273\mu^{110}\e^{59}
+380\mu^{109}\e^{60}+353\mu^{108}\e^{61}+428\mu^{107}\e^{62}
+248\mu^{106}\e^{63}\\
&+478\mu^{105}\e^{64}+327\mu^{104}\e^{65}+529\mu^{103}\e^{66}
+262\mu^{102}\e^{67}+426\mu^{101}\e^{68}+94\mu^{100}\e^{69}
+347\mu^{99}\e^{70}\\
&+474\mu^{98}\e^{71}+59\mu^{97}\e^{72}+210\mu^{96}\e^{73}
+240\mu^{95}\e^{74}+653\mu^{94}\e^{75}+228\mu^{93}\e^{76}
+218\mu^{92}\e^{77}\\
&+262\mu^{91}\e^{78}+518\mu^{90}\e^{79}+508\mu^{89}\e^{80}
+284\mu^{88}\e^{81}+97\mu^{87}\e^{82}+606\mu^{86}\e^{83}
+127\mu^{85}\e^{84}\\
&+550\mu^{84}\e^{85}+71\mu^{83}\e^{86}+580\mu^{82}\e^{87}
+393\mu^{81}\e^{88}+169\mu^{80}\e^{89}+159\mu^{79}\e^{90}
+415\mu^{78}\e^{91}\\
&+459\mu^{77}\e^{92}+449\mu^{76}\e^{93}+24\mu^{75}\e^{94}
+437\mu^{74}\e^{95}+467\mu^{73}\e^{96}+618\mu^{72}\e^{97}
+203\mu^{71}\e^{98}\\
&+330\mu^{70}\e^{99}+583\mu^{69}\e^{100}+251\mu^{68}\e^{101}
+415\mu^{67}\e^{102}+148\mu^{66}\e^{103}+350\mu^{65}\e^{104}
+199\mu^{64}\e^{105}\\
&+429\mu^{63}\e^{106}+249\mu^{62}\e^{107}+324\mu^{61}\e^{108}
+297\mu^{60}\e^{109}+404\mu^{59}\e^{110}+149\mu^{58}\e^{111}
+525\mu^{57}\e^{112}\\
&+508\mu^{56}\e^{113}+101\mu^{55}\e^{114}+78\mu^{54}\e^{115}
+580\mu^{53}\e^{116}+352\mu^{52}\e^{117}+455\mu^{51}\e^{118}
+400\mu^{50}\e^{119}\\
&+81\mu^{49}\e^{120}+469\mu^{48}\e^{121}+550\mu^{47}\e^{122}
+246\mu^{46}\e^{123}+628\mu^{45}\e^{124}+145\mu^{44}\e^{125}
+478\mu^{43}\e^{126}\\
&+384\mu^{42}\e^{127}+276\mu^{41}\e^{128}+375\mu^{40}\e^{129}
+305\mu^{39}\e^{130}+596\mu^{38}\e^{131}+134\mu^{37}\e^{132}
+187\mu^{36}\e^{133}\\
&+394\mu^{35}\e^{134}+488\mu^{34}\e^{135}+308\mu^{33}\e^{136}
+360\mu^{32}\e^{137}+673\mu^{31}\e^{138}+656\mu^{30}\e^{139}
+412\mu^{29}\e^{140}\\
&+208\mu^{28}\e^{141}+273\mu^{27}\e^{142}+570\mu^{26}\e^{143}
+316\mu^{25}\e^{144}+183\mu^{24}\e^{145}+513\mu^{23}\e^{146}
+429\mu^{22}\e^{147}\\
&+455\mu^{21}\e^{148}+587\mu^{20}\e^{149}+137\mu^{19}\e^{150}
+100\mu^{18}\e^{151}+316\mu^{17}\e^{152}+414\mu^{16}\e^{153}
+79\mu^{15}\e^{154}\\
&+100\mu^{14}\e^{155}+394\mu^{13}\e^{156}+136\mu^{12}\e^{157}
+95\mu^{11}\e^{158}+216\mu^{10}\e^{159}+655\mu^{9}\e^{160}
+457\mu^{8}\e^{161}\\
&+482\mu^{7}\e^{162}+445\mu^{6}\e^{163}+455\mu^{5}\e^{164}
+155\mu^{4}\e^{165}+302\mu^{3}\e^{166}+441\mu^{2}\e^{167}
+550\mu\e^{168}+\e^{169}.  
\end{split}
\end{align}
Using the notation from Proposition \ref{prop11}, Equations \eqref{deltaat677} and
\eqref{vdeltaat677} 
imply
\begin{align}\label{677calculation}
\begin{split}
&L_2\D^{38194r\cdot677^n}\\
&=V_2\D^{38194r\cdot677^n}
-\D^{38194r\cdot677^n}\\
&=(\mu^{76388r}\e^{38194r}-\mu^{38194r}\e^{76388r})^{677^n}\\
&=\e^{114582r\cdot677^n}x^{38194r\cdot677^n}(x^{38194r}-1)^{677^n}\\
&=\e^{114582r\cdot677^n}x^{38194r\cdot677^n}(x^{38194}-1)^{677^n}
(x^{38194(r-1)}+x^{38194(r-2)}+\cdots+x^{38194}+1)^{677n}.
\end{split}
\end{align}
Moreover, in $\mathbb Z/677[x]$, one can check
that the inhomogeneous polynomial
\[
676x^{169}+127x^{168}+\cdots+550x+1
\]
corresponding to the homogeneous polynomial
in $\mu$ and $\e$ in Equation
\eqref{E676at677} divides $x^{38194}-1$.
As in the cases $p=11$ and $p=13$, Equation
\eqref{677calculation} implies this is equivalent to
$L_2\D^{38194r\cdot677^n}$ being divisible by
$E_{676}^{677^n}$ (and hence by $E_{676}^j$
for $1\leq j\leq677^n$)
in $M_*(2)_{\mathbb Z/677}$.
This verifies condition (C4) in the case $\ell=2$ by
Proposition \ref{Serre}.
\end{proof}

\subsection{A conjecture}
Theorems \ref{main_5}, \ref{thm7},
\ref{thm11},
\ref{thm13}, and
\ref{thm677} all give evidence for
the following conjecture.
\begin{conj}\label{conj}
Let $p\geq5$ be a prime.  If $1\leq i\in\mathbb Z$
and $i=rp^n$ with $(r,p)=1$, then
\[
f_{i/j}=\D^{i(p^2-1)/12}
\]
for $1\leq j\leq p^n$,
and $f_{i/j}\neq\D^{i(p^2-1)}$
for all other values of $j$
allowed by Lemma \ref{betas}.
\end{conj}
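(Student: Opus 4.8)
We outline an approach to Conjecture~\ref{conj}, modeled on the proofs of Theorems~\ref{thm11}, \ref{thm13}, and~\ref{thm677}. Write $k=i(p^2-1)/12$ --- an integer, since $24\mid p^2-1$ for $p\geq5$ --- and set $f=\D^k\in M_{i(p^2-1)}$. The conjecture has two halves: that $f$ is an admissible choice for $f_{i/j}$ when $1\leq j\leq p^n$, and --- by Lemma~\ref{betas}, this case arises only when $r>1$ --- that $f$ is \emph{not} admissible when $p^n<j\leq a_n$. Both halves should follow once one determines the exact power of $E_{p-1}$ dividing $L_{\ell_0}f$ in $M_*(\ell_0)_{\mathbb Z/p}$, where $\ell_0$ is a topological generator of $\mathbb Z_p^\times$ (one exists because $\mathbb Z_p^\times$ is procyclic for $p$ odd).

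First I would dispatch conditions (C1)--(C3) for $f$, which is routine, exactly as for the analogous claims in Theorems~\ref{thm11}--\ref{thm677}: (C1) holds since $\D^k(q)=q^k+\cdots$; (C2) holds because $12\,\ord_q f=12k=i(p^2-1)>i(p^2-1)-(p-1)j$ for every $j\geq1$; and (C3) follows from Proposition~\ref{Serre} together with the classical fact that the mod $p$ reduction of $E_{p-1}$ --- the Hasse invariant --- is coprime to $\D$ in $(M_*)_{\mathbb Z/p}$, so that no power of $\D$ is divisible by $E_{p-1}$.

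The content is condition (C4). By Proposition~\ref{rigidity} it suffices to control the $E_{p-1}$-divisibility of $L_{\ell_0}f$. Following Subsection~\ref{theprime11}, one fixes a presentation of $M_*(\ell_0)_{\mathbb Z/p}$ --- for $\ell_0=2$, the ring $\mathbb F_p[\delta,\e]$ with the identities \eqref{deltaidentity}--\eqref{vdeltaidentity} --- expresses $\D$ and $V_{\ell_0}\D$ in it, and computes $L_{\ell_0}\D^k=V_{\ell_0}\D^k-\D^k$. For $\ell_0=2$, with $x=\mu/\e$, this gives
\[
L_2\D^k=\e^{3k}\bigl(x^{2k}-64^k x^{k}\bigr)=\e^{3k}x^{k}\bigl(x^{k}-1\bigr),
\]
the last step because $64^k=2^{6k}\equiv1\bmod p$ (as $\ord_p2\mid p-1\mid 6k$). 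Writing $k=rp^n\cdot(p^2-1)/12$ and using the characteristic-$p$ identity $g(x)^{p^n}=g(x^{p^n})$, one finds $x^k-1=(x^{m}-1)^{p^n}$ with $m=r(p^2-1)/12$ prime to $p$, so that $x^k-1$ is the $p^n$-th power of a separable polynomial. Hence the $E_{p-1}$-divisibility of $L_2\D^k$ equals $p^n$ times the multiplicity with which the dehomogenization of $E_{p-1}$ divides $x^{m}-1$; since the Hasse invariant has simple zeros and $x^m-1$ is separable, that multiplicity is $1$ as soon as the dehomogenization of $E_{p-1}$ divides $x^m-1$ at all, and then the $E_{p-1}$-divisibility of $L_2\D^k$ is \emph{exactly} $p^n$. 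Granting this, both halves of the conjecture follow: for $1\leq j\leq p^n$, $L_2f$ is divisible by $E_{p-1}^{j}$, giving (C4) and admissibility; and for $p^n<j\leq a_n$, it is not, so $f$ fails (C4) and is inadmissible. (When $p=5$ this step is precisely Lemma~\ref{L2ONDELTA}, whose leading term $3r\,y^{5^n}$ is nonzero since $(r,5)=1$; and the resulting inadmissibility of $\D^{2r5^n}$ for $5^n<j\leq a_n$ is consistent with the correction-term formulas of Theorem~\ref{main_5}.)

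The main obstacle is establishing that the dehomogenization of $E_{p-1}$ divides $x^{(p^2-1)/12}-1$ over $\mathbb F_p$ --- equivalently, that the supersingular values of the Hauptmodul $x$ on $X_0(2)$ are roots of unity of order dividing $(p^2-1)/12$. (For general $r$ this suffices, since $x^{(p^2-1)/12}-1$ divides $x^m-1$.) This is verified by explicit factorization for $p=5,11,13,677$ in the present paper, but I know of no uniform proof: supersingular points of $X_0(2)$ are rational over $\mathbb F_{p^2}$, which only forces order dividing $p^2-1$, and the extra factor of $12$ --- presumably reflecting the weight of $\D$ --- is what must be explained. A secondary obstacle is that $\ell_0=2$ fails to be a topological generator of $\mathbb Z_p^\times$ for some $p\geq5$ (for instance $p=7$ and $p=17$), so a genuinely uniform argument must either run the above computation over $\Gamma_0(\ell_0)$ for a varying $\ell_0$ --- which would require an analogue of Proposition~\ref{leveltwo}, unavailable once $X_0(\ell_0)$ has positive genus --- or else bypass Proposition~\ref{rigidity} by showing directly that the $E_{p-1}$-divisibility of $L_\ell\D^k$ is independent of the auxiliary prime $\ell\neq p$.
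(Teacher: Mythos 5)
The statement you are addressing is Conjecture \ref{conj}: the paper offers no proof of it, only the supporting computations of Theorems \ref{main_5}, \ref{thm7}, \ref{thm11}, \ref{thm13}, and \ref{thm677} and a short paragraph suggesting two possible strategies. Your outline is a sharpened version of the second of those strategies (``find a more conceptual reason why $E_{p-1}^{p^n}$ should always divide $L_\ell\D^{i(p^2-1)/12}$''), and the reduction you carry out is sound and goes somewhat beyond what the paper says: the computation $L_2\D^k=\e^{3k}x^k(x^k-1)$ with $x^k-1=(x^m-1)^{p^n}$, $m=r(p^2-1)/12$ prime to $p$, is correct (your congruence $64^k\equiv1\bmod p$ checks out since $(p-1)\mid 6k$), and it does isolate the conjecture's content in a single clean statement about the Hasse invariant on $X_0(2)$. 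The treatment of (C1)--(C3) is the same as in the proofs of Theorems \ref{thm11}--\ref{thm677}.

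However, this is not a proof, and you have correctly located where it stops being one. The assertion that the dehomogenization of $E_{p-1}$ in $M_*(2)_{\mathbb Z/p}$ divides $x^{(p^2-1)/12}-1$ --- equivalently, that the supersingular values of $x=\mu/\e$ are roots of unity of order dividing $(p^2-1)/12$ --- is verified in the paper only for $p=5,11,13,677$ by explicit factorization, and no general argument is given there or by you; this is precisely the open content of the conjecture, not a step that can be waved through. Two further gaps deserve explicit mention. First, for the negative half of the conjecture (that $\D^k$ is \emph{not} admissible for $p^n<j\leq a_n$) you need the \emph{exact} $E_{p-1}$-divisibility of $L_2\D^k$, hence the converse of the divisibility criterion; Proposition \ref{prop11} is stated only as a sufficient condition, and for $p\equiv3\bmod4$ the factor of $\delta$ in $E_{p-1}$ (as in Equation \eqref{E10at11}) makes the exact criterion genuinely more delicate than the separability argument you give --- only the $p=5$ case, Proposition \ref{method}, is an ``if and only if'' in the paper. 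Second, as you note, $\ell_0=2$ is not a topological generator of $\mathbb Z_p^\times$ for infinitely many $p$ (e.g.\ $p=7$), so the entire computation must be redone over $\Gamma_0(\ell_0)$ for varying $\ell_0$, where no analogue of Proposition \ref{leveltwo} is available in the paper. In short: the architecture of your argument matches what the paper envisions, but the conjecture remains a conjecture.
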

In theory, one possible approach to
Conjecture \ref{conj} 
would be to obtain a closed
formula for the Eisenstein series $E_{p-1}
\in M_{p-1}(2)_{\mathbb Z/p}$
as a polynomial in $\mu$ and $\e$
(and $\delta$ if $p\equiv 3\mod 4$). 
Specific instances of this include Equations
\eqref{E10at11}, \eqref{E12at13}, and \eqref{E676at677},
each obtainable by employing the recursive formula
\[
(t-3)(2t-1)(2t+1)\dfrac{B_{2t}}{(2t)!}
E_{2t}=-3\sum_{a+b=t}
\dfrac{(2a-1)(2b-1)B_{2a}B_{2b}}{(2a)!(2b)!}
E_{2a}
E_{2b}
\]
with $t=(p-1)/2$ and then reducing modulo $p$ for $p=11$, 13, and 677, 
respectively.  
Another approach 
would be to find a more conceptual reason why
$E_{p-1}^{p^n}$ should always divide
$L_{\ell}\D^{i(p^2-1)/12}$
in $M_*(\ell)_{\mathbb Z/p}$.  

The next logical step beyond Conjecture \ref{conj}
is to generalize Theorem \ref{main_5}
by computing all modular forms $f_{i/j}$
at arbitrary primes $p\geq7$.  We are hopeful
that the ``correction terms'' used in this paper
at the prime 5 somehow have natural $p$-primary analogs.
In particular, one can first try to generalize 
Equation \eqref{FTFTN} by producing modular
forms $f_{p^2/a_2}$ satisfying conditions (C1) through
(C4) for $i=p^2$ and $j=a_2$.  

\subsection*{Acknowledgments}  The author would like to
thank Amanda Folsom, Tom Shemanske, and Mark Behrens
for helpful correspondence.

\bibliographystyle{plain}
\bibliography{math}

\end{document}